\def\@maketitle{%
  \newpage
  {\vspace*{-11ex}%
  \small\noindent}
  \null
  \vskip 5em%
  \begin{flushleft}%
    {\LARGE \bf\@title \par}
    \vskip 1.5em%
       {\large\bf \lineskip .5em \@author \par}%
    \vskip 1.0em%
  \end{flushleft}%
  \par
  \vskip 1.5em}
\renewcommand\section{\@startsection {section}{1}{\z@}%
                                   {-3.5ex \@plus -1ex \@minus -.2ex}%
                                   {1.5ex \@plus.2ex}%
                                   {\normalfont\Large\bfseries}}
\renewcommand\subsection{\@startsection{subsection}{2}{\z@}%
                                     {-3.0ex\@plus -2ex \@minus -.2ex}%
                                     {1.0ex \@plus .2ex}%
                                     {\normalfont\large\bfseries}}
\renewcommand\subsubsection{\@startsection{subsubsection}{3}{\z@}%
                                     {-2.5ex\@plus -2ex \@minus -.2ex}%
                                     {0.9ex \@plus .2ex}%
                                     {\normalfont\normalsize\bfseries}}
\renewcommand\paragraph{\@startsection{paragraph}{4}{\z@}%
                                     {-2.0ex \@plus1ex \@minus.2ex}%
                                     {-1em}%
                                     {\normalfont\normalsize\bfseries}}
\renewcommand\subparagraph{\@startsection{subparagraph}{5}{\parindent}%
                                       {-1.5ex \@plus1ex \@minus .2ex}%
                                       {-1em}%
                                      {\normalfont\normalsize\bfseries}}
\def\@evenhead{\footnotesize\thepage\hfil\slshape\leftmark}%
\def\@oddhead{\footnotesize{\slshape\rightmark}\hfil\thepage}%
\numberwithin{equation}{section} \numberwithin{figure}{section}
\numberwithin{table}{section}
\renewcommand{\@makecaption}[2]{\begin{quote}
\footnotesize {\bf #1}~#2
\end{quote}}
\newenvironment{summary}{\vskip\baselineskip \noindent\small\bf Summary\\ \rm}%
{\vskip\baselineskip}
\newenvironment{proof}{{\vskip\baselineskip\noindent\textbf{Proof:}}}%
{\hspace*{.1pt}\hspace*{\fill}\BOX\vskip\baselineskip}
\newcommand{\BOX}{\ensuremath\Box}
\newtheorem{theorem}{Theorem }[section]
\newtheorem{corollary}[theorem]{Corollary}
\newtheorem{definition}[theorem]{Definition}
{\theorembodyfont{\rmfamily}}
{\theorembodyfont{\rmfamily}\newtheorem{example}[theorem]{Example}}
{\theorembodyfont{\rmfamily}}
\newtheorem{lemma}[theorem]{Lemma}
\newtheorem{remark}[theorem]{Remark}}
\title{Applications of the Likelihood Theory in Finance: Modelling and Pricing}
\author{Arnold Janssen, Martin Tietje}
\newcommand{\ind}{1\!\! 1}
\begin{document}
\maketitle\thispagestyle{empty}


\begin{summary}
This paper discusses the connection between mathematical finance and statistical modelling which turns out to be more than a formal mathematical correspondence. 
We like to figure out how common results and notions in statistics and their meaning can be translated to the world of mathema-tical finance and vice versa. 
A lot of similarities can be expressed in terms of LeCam's theory for statistical experiments which is the theory of the behaviour of likelihood processes.\\
For positive prices the arbitrage free financial assets fit into filtered experiments. It is shown that they are given by filtered likelihood ratio processes.
From the statistical point of view, martingale measures, completeness and pricing formulas are revisited. The pricing formulas for various options are connected with the power functions of tests. 
For instance the Black-Scholes price of a European option has an interpretation as Bayes risk of a Neyman Pearson test. Under contiguity the convergence of financial experiments and option prices are obtained. 
In particular, the approximation of It\^{o} type price processes by discrete models and the convergence of associated option prices is studied. 
The result relies on the central limit theorem for statistical experiments, which is well known in statistics in connection with local asymptotic normal (LAN) families. 
As application certain continuous time option prices can be approximated by related discrete time pricing formulas.\\
\\
\textbf{R\'{e}sum\'{e}}\\
Dans cet travail on discute une connexion entre les math\'{e}matiques financi\`{e}res et la mod\'{e}lisation 
statistique. On trouve qu'il y a plus qu'une correspondance formelle. Nous voulons d\'{e}couvrir comment on 
peut traduire des r\'{e}sultats usit\'{e}s statistiques et leur signification par des expressions des 
math\'{e}matiques financi\`{e}res et vice versa. Beaucoup des ressemblances peuvent \^{e}tre exprim\'{e}s 
par la th\'{e}orie de LeCam sur les exp\'{e}riences statistiques o\`{u}, autrement dit, par la th\'{e}orie 
de comportment des processus de likelihood ratios.\\
Un actif financier positif avec l'hypoth\`{e}se d'absence d'opportunit\'{e}s d'arbitrage permet une 
repr\'{e}-sentation par une exp\'{e}rience filtr\'{e}e ou, pour le dire dans une fa\c{c}on plus 
pr\'{e}cise, par un processus de likelihood ratios filtr\'{e}. En utilisant cette observation nous revisitons
des probabilit\'{e}s martingales, l'hypoth\`{e}se de compl\'{e}tude des march\'{e}s et des formules 
d'\'{e}valuation des produits d\'{e}eriv\'{e}s d'un point de vue statistique.
\end{summary}

\renewcommand{\thefootnote}{}
\footnotetext{\hspace*{-.51cm}AMS 1991 subject classification: Primary: 91B02; Secondary: 62B15, 91B24\\ %
Key words and phrases: filtered experiment, likelihood ratio process, martingale measure, pricing formula, Bayes risk, Neyman Pearson test, completeness, contiguity, local asymptotic normality}

\section{Introduction}\label{section1}
This paper studies financial models in the light of statistical experiments known from LeCam's theory. 
There are already known a lot of similarities between mathematical finance and statistical models which are summarized below. 
We will see that various aspects of financial models can be treated by well established statistical methods which were originally developed for modelling statistical experiments. 
It is our aim to review and to refer to this type of parallel work.
\begin{itemize}
 \item For researchers in mathematical finance we present likelihood methods of statistics which allow at least partially another view and new insight in the structure of financial models.
\item The statisticians will see which kind of likelihood methods can be used for financial models and where parallel working was done.
\end{itemize}
We want to stress at the very beginning that this paper does not deal with data analysis in finance. It is concerned with modelling. To bring both areas together the paper has the following structure.
As service for all readers major results about statistical experiments are recalled and outlined with complete proofs for those parts of LeCam's theory that are really needed below. 
It is not the intention to present the most general and abstract known form of that work. On the other hand we restrict ourselves to nonnegative price processes in finance. We know of course that more general models exist.
In our context classical questions about completeness, pricing formulas, prices and the convergence of price processes in finance are addressed and linked to statistics.\\
We summarize some applications of methods from statistics in finance which serve as our motivation.
The Neyman-Pearson lemma may be used to solve optimization problems which is done in F\"ollmer and Leukert \cite{Foellmer and Leukert:99} in the case of quantile hedging, 
in F\"ollmer and Leukert \cite{Foellmer and Leukert:00} for minimizing shortfall risk when hedging and for a special case of a simple risk measure in Schied \cite{Schied:04} 
where the risk of the terminal liability of an issued claim is minimized under some constraints on the issuer's capital. 
Considering risk measures which rely on likelihood ratios as in Schied \cite{Schied:04} may lead to the theory of maximin testing treated, for example, by Cvitanic and Karatzas \cite{Cvitanic and Karatzas:01} or, using a different approach, by 
Rudloff and Karatzas \cite{Rudloff and Karatzas:10}. 
In the same spirit Schied \cite{Schied:05} reduces maximization of utility where a robust utility functional is defined by a set of probability measures to the problem of finding a ``least favorable'' probability measures 
by applying a result of Huber and Strassen \cite{Huber and Strassen:73} from test theory. 
The notion of contiguity is used in finance by Kabanov and Kramkov \cite{Kabanov and Kramkov:94} and Hubalek and Schachermayer \cite{Hubalek and Schachermayer:98}, which we will comment on later on. 
Shiryaev \cite{Shiryaev:99} also uses contiguity as well as techniques from LeCam's theory, for instance LeCam's third lemma. 
Gushchin and Mordecki \cite{Gushchin and Mordecki:02} apply the theory of binary experiments to obtain extremal measures which give upper and lower bounds for the range of option prices in semimartingale models. 
For the theory of statistical experiments we refer to LeCam \cite{LeCam:86}, Strasser \cite{Strasser:85a}, Torgersen \cite{Torgersen:91} and Shiryaev and Spokoiny \cite{Shiryaev and Spokoiny:00}. 
The starting point in section $2$ is the following basic observation.
\begin{itemize}
 \item Each positive financial price process which admits at least one martingale measure is a filtered likelihood ratio process.
\end{itemize}
This fact is a useful link to statistical modelling.
\begin{itemize}
 \item In that case there is a statistical model behind the financial price process. It is called a financial statistical experiment.
\end{itemize}
We explain what are the statistical counterparts and dual objects of certain options, option prices and further fundamentals of finance.
Note that the theory of statistical experiments is just the theory of likelihood ratio processes which is well developed in the literature. The correspondence is one to one if and only if the martingale measure is unique.
Otherwise the martingale measure serves as a nuisance parameter. We do not wonder that several authors used statistical likelihood methods in finance.
As a first application, in section $3$ we show that certain option prices can be interpreted in terms of power functions of tests and thus can be regarded as Bayes risk in various cases. 
Here Neyman Pearson tests show up for European put and call options.\\
Section $4$ deals with sequential aspects of financial models and filtered likelihoods. Example \ref{ito} explains that standard It\^{o} type price processes have certain regression models as statistical counterparts.
In addition it is shown that completeness of financial markets can be interpreted as statistical completeness for the corresponding financial experiments.\\
Section $5$ first recalls asymptotic results for statistical experiments which are now applied to financial experiments. 
Roughly speaking the convergence of financial experiments implies the convergence of the power of tests and also the convergence of Bayes risks. We show how to apply these results to the convergence of options prices in finance.
The main tool is contiguity and LeCam's third lemma.\\
Section $6$ deals with the asymptotics of discrete financial markets and their option prices. 
Notice that LeCam's work offers a ``central limit theorem'' for statistical experiments which is known as the famous local asymptotic normality (LAN). 
As nice application of LAN it is shown that various discrete time financial models converge to Black-Scholes type models. As a consequence the convergence of discrete time option prices to Black-Scholes prices can be discussed.\\
Throughout the paper the results are illustrated by simple models, for instance the Cox-Ross-Rubinstein model or Black-Scholes type models.

\section{Price- and Likelihood processes}\label{section2}

In this section we show that price processes can be regarded as likelihood ratio processes of a filtered experiment.
Recall that a filtered experiment is given by a statistical experiment $E=(\Omega, \mathcal{F}, \{P_{\theta} : \theta \in \Theta \})$, including the set of probability spaces $(\Omega, \mathcal{F}, P_{\theta})$, 
with the family of measures $\{P_{\theta} : \theta \in \Theta \}$ being parameterized by a parameter $\theta$, and a filtration $(\mathcal{F}_t)_{t \ge 0}$, see for example Shiryaev and Spokoiny \cite{Shiryaev and Spokoiny:00} (p.1). 
For early works on filtered experiments we refer the reader to Jacod \cite{Jacod:89} and Strasser \cite{Strasser:87} or for a more recent work to Norberg \cite{Norberg:04}, where filtered experiments are compared.\\
A main issue in finance is the existence of a martingale measure which is, at least in the discrete case, equivalent to the ``no arbitrage'' condition if the time horizon is finite, see Shiryaev \cite{Shiryaev:99} (p. 656). 
The continuous time case is discussed in Delbaen and Schachermayer \cite{Delbaen and Schachermayer:94}.\\
Below let us fix a time interval $[0,T]$ with $T<\infty$, a trading period $I \subset [0,T]$ with $\{0,T\} \subset I$, a filtered probability space $(\Omega, \mathcal{F} ,P)$ with filtration $(\mathcal{F}_t)_{t \in I}$ and 
$\mathcal{F}=\sigma(\mathcal{F}_t:t \in I)$ as well as $d$ adapted positive discounted price processes $(X_t^i)_{t \in I}$, $1 \le i \le d$. 
Suppose that $\mathcal{F}_0=\sigma(\mathcal{N})$ with $\mathcal{N}=\{N:P(N)=0 \textnormal{  or  } P(N)=1\}$.
We refer to $P$ as the real world measure.
Our first result characterizes underlying martingale measures of price processes in terms of statistical experiments. The equivalence below is very useful.

\begin{theorem}\label{darstellung}
Let $Q$ be a probability measure equivalent to $P$. The following assertions are equivalent:
\begin{enumerate}
   \item[(1)] There are probability measures $Q_1, ..., Q_d$ on $(\Omega, \mathcal{F})$ satisfying
 \begin{equation}\label{likelihood}
    \frac{dQ_{i|\mathcal{F}_t}}{dQ_{|\mathcal{F}_t}} = \frac{X^i_t}{X^i_0}, \ \ \ t \in I, 
 \end{equation}
and $Q_i \ll Q$ for all $1 \le i \le d$.
   \item[(2)] $Q$ is a martingale measure, i.e. $(X_t^i)_{t \in I}$ is a $Q$-martingale for each $1 \le i \le d$.
\end{enumerate}
\end{theorem}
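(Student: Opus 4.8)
The plan is to verify both implications directly from the definition of the Radon--Nikodym derivatives in \refeq{likelihood} together with the tower property of conditional expectation. Throughout write $Z^i_t := X^i_t/X^i_0$; since $\mathcal{F}_0 = \sigma(\mathcal{N})$ the initial value $X^i_0$ is a $P$- (hence $Q$-) almost surely strictly positive constant, so $Z^i_0 = 1$, and since $T = \max I \in I$ and the filtration is increasing one has $\mathcal{F} = \sigma(\mathcal{F}_t : t \in I) = \mathcal{F}_T$.

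First I would treat the implication $(2) \Rightarrow (1)$. Assuming each $X^i$, hence each $Z^i$, is a $Q$-martingale indexed by $I$, one has $E_Q[Z^i_t] = E_Q[Z^i_0] = 1$ for all $t \in I$, so $dQ_i := Z^i_T \, dQ$ defines a probability measure on $(\Omega,\mathcal{F})$ with $Q_i \ll Q$ (in fact $Q_i \sim Q$, as $Z^i_T > 0$). For $A \in \mathcal{F}_t$ the martingale property gives $Q_i(A) = E_Q[Z^i_T \ind_A] = E_Q[\, E_Q(Z^i_T \mid \mathcal{F}_t)\,\ind_A\,] = E_Q[Z^i_t \ind_A]$, which is exactly $dQ_{i|\mathcal{F}_t}/dQ_{|\mathcal{F}_t} = Z^i_t = X^i_t/X^i_0$, i.e.\ \refeq{likelihood}. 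Conversely, for $(1) \Rightarrow (2)$: from $Q_i \ll Q$ on $\mathcal{F}$ we get $Q_{i|\mathcal{F}_t} \ll Q_{|\mathcal{F}_t}$ for every $t \in I$, so \refeq{likelihood} is meaningful and says $Q_i(A) = E_Q[Z^i_t \ind_A]$ for $A \in \mathcal{F}_t$; taking $A = \Omega$ yields $E_Q[Z^i_t] = 1$, so $X^i_t = X^i_0 Z^i_t$ is $Q$-integrable. For $t \le s$ in $I$ and $A \in \mathcal{F}_t \subseteq \mathcal{F}_s$, both $E_Q[Z^i_s \ind_A]$ and $E_Q[Z^i_t \ind_A]$ equal $Q_i(A)$, and since $Z^i_t$ is $\mathcal{F}_t$-measurable this forces $E_Q(Z^i_s \mid \mathcal{F}_t) = Z^i_t$; hence $Z^i$, and therefore $X^i = X^i_0 Z^i$, is a $Q$-martingale, i.e.\ $Q$ is a martingale measure.

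I do not expect a serious obstacle: the content is essentially that \refeq{likelihood}, holding for all $t \in I$, is precisely the consistency condition on the family $\{Z^i_t \cdot Q_{|\mathcal{F}_t}\}_{t \in I}$ that the $Q$-martingale property encodes. The one point needing a little care is the construction in $(2)\Rightarrow(1)$: one must check that the single measure $Q_i := Z^i_T \cdot Q$ restricts \emph{to every} $\mathcal{F}_t$ as $Z^i_t \cdot Q_{|\mathcal{F}_t}$, not merely to $\mathcal{F}_T$, and it is exactly here --- not in the bare facts $Z^i_T \ge 0$, $E_Q[Z^i_T]=1$ --- that the martingale property is used; integrability of the $X^i_t$ and the probability-measure property of each $Q_i$ then come for free from $E_Q[Z^i_t] = E_Q[Z^i_0] = 1$. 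The equivalence $Q \sim P$ plays no essential role in the argument; it only guarantees that ``martingale measure'' and ``$Q_i \ll Q$'' are the economically meaningful notions.
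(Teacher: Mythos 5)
Your proposal is correct and follows essentially the same route as the paper: for $(2)\Rightarrow(1)$ you define $dQ_i/dQ := X^i_T/X^i_0$ and obtain \refeq{likelihood} from the martingale property via the tower rule, and for $(1)\Rightarrow(2)$ you verify that the density process $t \mapsto dQ_{i|\mathcal{F}_t}/dQ_{|\mathcal{F}_t}$ is a $Q$-martingale, which the paper simply cites as well known. The only difference is that you spell out this last step (and the triviality of $\mathcal{F}_0$, hence constancy of $X^i_0$) explicitly, which is fine.
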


\begin{proof}
$\textit{(1)}$ $\Rightarrow$ $\textit{(2)}$: 
It is well known that the left hand side of \eqref{likelihood} is a $Q$-martingale as a density process which implies the result.\\
For the implication $\textit{(2)}$ $\Rightarrow$ $\textit{(1)}$
we may define probability measures \ $Q_i \ll Q$ by
\begin{eqnarray*}
\frac{dQ_i}{dQ} := \frac{X^i_T}{X^i_0} \quad \textnormal{since }~ E_Q\left(\frac{X_T^i}{X_0^i}\right)=\frac{X_0^i}{X_0^i}=1.
\end{eqnarray*}
This gives 
\begin{eqnarray*}
	\frac{X^i_t}{X^i_0}
	= E_Q\left[\frac{X_T^i}{X_0^i} \Bigg|\mathcal{F}_t \right]=E_Q\left[\frac{dQ_i}{dQ} \Bigg|\mathcal{F}_t \right]
	= \frac{dQ_{i|\mathcal{F}_t}}{dQ_{|\mathcal{F}_t}}
\end{eqnarray*}
for all $1 \le i \le d$ and $t \in I$, which completes the proof.
\end{proof}

\begin{remark}
 For $I=[0,\infty)$ Theorem \ref{darstellung} remains true only if we enforce further conditions. The implication $\textit{(1)}$ $\Rightarrow$ $\textit{(2)}$ carries over to the case $I=[0,\infty)$ without any changes. However, the implication $\textit{(2)}$ $\Rightarrow$ $\textit{(1)}$ is not true in general. A sufficient additional condition is the following one: for each $i$ the process
$(X_t^i)_{t \in [0,\infty)}$ is a generated $Q$-martingale generated by some $X_{\infty}^i$, i.e. 
$X_t^i=E_Q[X_{\infty}^i |\mathcal{F}_t ]$, $\ t \in [0,\infty)$. Then, the proof runs as before with $X_{\infty}^i$ taking the role of $X_T^i$.
\end{remark}

\begin{definition}\label{fe}
 In the context of Theorem \ref{darstellung} we call $$(\Omega, \mathcal{F}, \{Q_1,...,Q_d, Q, P\})$$ together with the filtration $(\mathcal{F}_t)_{t \in I}$ a financial experiment. 
The processes 
$\left( \dfrac{dQ_{i|\mathcal{F}_t}}{dQ_{|\mathcal{F}_t}}\right)_{t \in I} $ are called filtered likelihood processes (or density processes in finance).
\end{definition}

\begin{remark}\label{experiment}
  $\textit{(a)}$  We see that our price processes are completely determined by this experiment. If the real world measure is of no importance, for instance for various pricing formulas, we refer to $\{Q_1,...,Q_d, Q\})$  as the financial experiment.\\
  $\textit{(b)}$ Recall that, up to equivalence of statistical experiments (see Torgersen \cite{Torgersen:91} for the definition), 
our statistical experiment (given by $P$-dominated distributions) is uniquely determined by the likelihood ratio distribution
\begin{equation}\label{sm}
\mathcal{L}\left(\left(\frac{dQ_1}{dQ},...,\frac{dQ_d}{dQ},\frac{dP}{dQ}\right)\Big|P\right).
\end{equation}
Observe that it is easy to see that \eqref{sm} also uniquely determines the likelihood distribution of $\left(\frac{dQ_1}{dQ},...,\frac{dQ_d}{dQ},\frac{dP}{dQ}\right)$ under $Q_1,...,Q_d,Q$.
In consequence, our financial model is given by
$$\mathcal{L}\left(\left(\frac{dQ_1}{dQ},...,\frac{dQ_d}{dQ}\right)\Big|P\right).$$
In this latter setup there is room for one degree of freedom given by the martingale measure $Q$ which is not unique in general.\\
$\textit{(c)}$ If the martingale measure $Q$ is unique we have a one-to-one correspondence between financial models and financial experiments.
 
\end{remark}

\begin{example} [Cox-Ross-Rubinstein model] \label{crr}
 Consider the discrete time scale with $I=\{0,1,...,N\}$ and factors $u>d>0$ for upwards and downwards moving respectively. Without loss of generality we assume $\Omega=\{0,1\}^N$. For $\omega=(\omega_1,...,\omega_N)$ let $k_n=k_n(\omega)=\sum_{j=1}^n \omega_j$. The bond price process $S^0$ and price process of the risky asset $S^1$ can be written as
\begin{align*}
S_n^0=p_0 r^n, \quad n \in I, \quad
S_n^1=p_1 u^{k_n} d^{n-k_n}, \quad n \in I, \ \ 0 \le k_n \le n,
\end{align*}
where $p_0>0$, $r \ge 1$ and $p_1>0$ are constants. By $(\mathcal{F}_n)_{n \in I}$ we denote the canonical filtration.
The discounted price process is just
\begin{equation}
 X_n:=\frac{S_n^1}{S_n^0}=\frac{p_1}{p_0} \tilde{u}^{k_n} \tilde{d}^{n-k_n}, \quad n \in I, \ \ 0 \le k_n \le n
\end{equation}
with respect to $(\mathcal{F}_n)_{n \in I}$, where $\tilde{u}:=\frac u r$ and $\tilde{d}:=\frac d r$.\\
In this example we now get a short proof of the difficult half of the first fundamental asset pricing theorem (see for instance Shiryaev \cite{Shiryaev:99}, p. 417/418, 655), i.e. we show that the assumption of no arbitrage opportunities implies the existence of an equivalent martingale measure. Under the assumption that there are no arbitrage opportunities in the market we must have $u>r>d$, as is easy to check by using a straightforward contradiction argument, or equivalently $\tilde{u}>1>\tilde{d}$. Thus, by setting $\tau:=\dfrac{1-\tilde{d}}{\tilde{u}-\tilde{d}}$ and $\kappa:=\tau \tilde{u}$ we obtain $\tau, \kappa \in (0,1)$. Now $Q:=\left( (1-\tau)\varepsilon_0+\tau \varepsilon_1\right)^N$ and $Q_1:=\left( (1-\kappa)\varepsilon_0+\kappa \varepsilon_1\right)^N$ are product probability measures equivalent to $P$ satisfying
$$\frac{dQ_{1|\mathcal{F}_n}}{dQ_{|\mathcal{F}_n}} = \left(\frac{\kappa}{\tau}\right)^{k_n} \left(\frac{1-\kappa}{1-\tau}\right)^{n-k_n}=\tilde{u}^{k_n} \tilde{d}^{n-k_n}= \frac{X_n}{X_0}.$$
We conclude from Theorem \ref{darstellung} that $Q$ is a martingale measure.\\
Similarly, more general models - say trinomial models - can be discussed. Theorem \ref{satz1} below offers a method how to find all martingale measures if one of them is known.
\end{example}

Below we will use known results from the theory of statistical experiments in order to revisit financial models. In this paper we treat option prices, regression models, completeness and convergence of option prices.

\section{Option prices in terms of power of tests}\label{section3}

As first application we will derive option prices of certain options $H$ in terms of tests for statistical experiments. In addition to our price processes we consider a bond
\begin{equation}\label{bond}
 S_t^0=\exp\left(\int_0^t \rho(u) du\right)
\end{equation}
given by an integrable deterministic interest rate $\rho:[0,T] \rightarrow \mathbb{R}$. A lot of claims $H$ are paid out at the endpoint $T$. We consider only claims of this type throughout the paper. Typically, a price $p_Q(H)$ of an option with payoff $H$ like this is given by
\begin{equation}\label{ph}
p_Q(H)=E_Q((S_T^0)^{-1}H)
\end{equation}
where $Q$ is a martingale measure, see for instance Karatzas and Shreve \cite{Karatzas and Shreve:91} (p.378).
\bigskip\\
As motivation consider for $d=1$ and $S_0^1=s_0^1$ a European call option with strike price $K$ given by
 \begin{equation}\label{call}
H_C=(S_T^1-K)^+=(S_T^1-K)\ind_{\{S_T^1>K\}}.  
 \end{equation}
The indicator function 
$\ind_{\{S_T^1>K\}}=\ind_{\left\{\frac{X_T^1}{X_0^1}>K (s_0^1)^{-1} \exp\left(-\int_0^T\rho(u) du \right) \right\}}$
may be regarded as a test for our financial experiment which links option prices to power function of tests for statistical hypotheses.
\smallskip

$\textit{(A)}$ Assume that $H$, the payoff at time $T$, has the form
\begin{equation}\label{a}
 H=\left(\sum_{i=1}^d a_iS_T^i - K\right)\phi\left( \tilde{S} \right)
\end{equation}
with $\tilde{S}:=\left(\left( \frac{S_t^i}{s_0^i \exp\left( \int_0^t \rho(u)du\right) }\right)_{t \le T}\right)_{i=1,...,d}$
where $a_i$ and $K$ are real coefficients and $\phi:\left(\mathbb{R}^{[0,T]}\right)^d \rightarrow [0,1]$ can be viewed as a test.\\
In this formula $K$ is a strike price, $ \sum_{i=1}^d a_i S_T^i$ can be regarded as a new financial product and $\phi$ (mostly an indicator function) models the constraints which are responsible for the payoff $H \neq 0$. 
By linearity Theorem \ref{preis} below also holds for a finite sum
\begin{equation}\label{a2}
 \sum_{j=1}^m H_j
\end{equation}
where $H_j$ is as in \eqref{a}.
Examples for options satisfying condition $\textit{(A)}$ are the European call and the European put option as well as many other options like the Straddle option, the Strangle option, the Bull-Spread option 
(see Korn and Korn \cite{Korn and Korn:01}, p.148/149 for payoff profiles), which all depend only on the final value of the underlying asset or digital options and barrier options which may depend on the whole path.
Another example of a payoff function of type \eqref{a} is
\begin{equation}\label{a3}
 H=(a(S^1_T-S^2_T)-K) \ind_{\{S^1_T>c S^2_T\}}.
\end{equation}

\begin{theorem}\label{preis}
 Under assumption $\textit{(A)}$ and for a fixed martingale measure $Q$, the option price (\ref{ph}) of $H$ is given by
\begin{equation}\label{preisformel}
 p_Q(H)=\sum_{i=1}^d a_i s_0^i E_{Q_i}( \phi)-\exp\Big(-\int_0^T \rho(u)du\Big) K E_Q(\phi)
\end{equation}
where $\phi=\phi\left(\left( \dfrac{dQ_{1|\mathcal{F}_t}}{dQ_{|\mathcal{F}_t}} \right)_{t \le T},...,\left( \dfrac{dQ_{d|\mathcal{F}_t}}{dQ_{|\mathcal{F}_t}} \right)_{t \le T}\right)$. 
By linearity \eqref{preisformel} can be extended to finite sums \eqref{a2}.
\end{theorem}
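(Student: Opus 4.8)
The plan is to start from the definition \eqref{ph} of the price, $p_Q(H)=E_Q\big((S_T^0)^{-1}H\big)$, insert the special form \eqref{a} of the payoff, and split the expectation by linearity into a sum of ``asset terms'' $a_iE_Q\big((S_T^0)^{-1}S_T^i\,\phi(\tilde S)\big)$ and a single ``strike term'' $-KE_Q\big((S_T^0)^{-1}\phi(\tilde S)\big)$. Since $(S_T^0)^{-1}=\exp(-\int_0^T\rho(u)du)$ is deterministic, it factors out of every term; the strike term then becomes $-\exp(-\int_0^T\rho(u)du)\,K\,E_Q(\phi)$ as soon as we have identified the random variable $\phi(\tilde S)$ with the $\phi$ appearing in the statement.

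Next I would rewrite each asset term. The key step is the chain of identities $(S_T^0)^{-1}S_T^i=X_T^i=X_0^i\,\frac{dQ_i}{dQ}=s_0^i\,\frac{dQ_i}{dQ}$: the first equality is the definition of the discounted price, the second is \eqref{likelihood} at $t=T$ (which is available because $Q$ is a martingale measure, so Theorem \ref{darstellung} applies and in particular $Q_i\ll Q$), and the third uses $X_0^i=S_0^i/S_0^0=s_0^i$. A change of measure $E_Q\big(\frac{dQ_i}{dQ}\,Y\big)=E_{Q_i}(Y)$, legitimate precisely because $Q_i\ll Q$, then turns the $i$-th asset term into $a_i s_0^i E_{Q_i}\big(\phi(\tilde S)\big)$.

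It remains to check that $\tilde S$ is exactly the tuple of filtered likelihood processes, so that $\phi(\tilde S)$ is the $\phi$ written in the theorem. For each $i$ and each $t\le T$ one computes $\frac{S_t^i}{s_0^i\exp(\int_0^t\rho(u)du)}=\frac{S_t^i/S_t^0}{X_0^i}=\frac{X_t^i}{X_0^i}=\frac{dQ_{i|\mathcal{F}_t}}{dQ_{|\mathcal{F}_t}}$ by \eqref{likelihood}; assembling these coordinates over $i$ and $t$ gives the claim. Collecting the asset terms and the strike term yields \eqref{preisformel}, and the extension to finite sums \eqref{a2} is immediate from linearity of the expectation, which is why it suffices to treat a single $H$ of the form \eqref{a}.

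I do not expect a genuine obstacle here: the argument is essentially bookkeeping layered on Theorem \ref{darstellung}. The two points needing a little care are (i) fixing the normalization conventions so that the discounting implicit in Section \ref{section2} is the one induced by the bond \eqref{bond}, giving $X_t^i=S_t^i/S_t^0$ and $X_0^i=s_0^i$, and (ii) invoking $Q_i\ll Q$ to justify the change of measure --- but this is guaranteed by part (1) of Theorem \ref{darstellung} once $Q$ is a martingale measure.
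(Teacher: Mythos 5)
Your proposal is correct and follows essentially the same route as the paper: rewrite $S_t^i=\exp\bigl(\int_0^t\rho(u)du\bigr)s_0^i\,\frac{dQ_{i|\mathcal{F}_t}}{dQ_{|\mathcal{F}_t}}$ via Theorem \ref{darstellung}, factor out the deterministic discount, and absorb the density into a change of measure $E_Q\bigl(\frac{dQ_i}{dQ}\phi\bigr)=E_{Q_i}(\phi)$. The extra care you take in identifying $\tilde S$ with the tuple of filtered likelihood processes and in citing $Q_i\ll Q$ is implicit in the paper's proof, so there is nothing to add.
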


\begin{remark}
$\textit{(a)}$ 
 The option price formula demonstrates the importance of financial experiments. 
The value $p_Q(H)$ is a linear function of power functions of tests $\phi$ given by the filtered experiments and their likelihood functions which yields another look on prices by purely statistical quantities.\\
$\textit{(b)}$ A similar price formula was obtained by Gerber and Shui \cite{Gerber and Shiu:94} via Esscher transforms for special stocks when $\log S(t)$ has stationary and independent increments.
\end{remark}

\begin{proof}
 Our proof starts with the observation that we can rewrite the price processes in a convenient form. Indeed, Theorem \ref{darstellung} yields 
$$S_t^i= S_t^0 X_t^i  = \exp\left( \int_0^t \rho(u)du \right) s_0^i \dfrac{dQ_{i|\mathcal{F}_t}}{dQ_{|\mathcal{F}_t}}.$$
Using this and the pricing formula \ref{ph}, it follows that
\begin{eqnarray*}
 p_Q(H)&=&E_Q\left( \exp\left( -\int_0^T \rho(u)du\right) H\right) \\
&=&\sum_{i=1}^d a_i E_Q\left(s_0^i \frac{dQ_{i|\mathcal{F}_T}}{dQ_{|\mathcal{F}_T}}\phi\right)  -\exp\Big( -\int_0^T \rho(u)du\Big) K E_Q( \phi)   \\
&=&\sum_{i=1}^d a_i s_0^i E_{Q_i}( \phi)-\exp\Big(-\int_0^T \rho(u)du\Big) K E_Q(\phi),
\end{eqnarray*}
which is the desired formula.
\end{proof}

\begin{example}[European call option]\label{calloption} 
 For a fixed martingale measure $Q$ the price of the European call (\ref{call}) is given by
\begin{equation}\label{pqhc}
 p_Q(H_C)=s_0^1 E_{Q_1}\left( \phi\left( \frac{dQ_1}{dQ}\right) \right)
-\exp\left( -\int_0^T \rho(u)du\right)K E_Q\left( \phi\left( \frac{dQ_1}{dQ}\right) \right),
\end{equation}
where $\phi\left( \dfrac{dQ_1}{dQ}\right)
=\ind_{\left\{\frac{dQ_1}{dQ}>c \right\}}$ can be identified as a Neyman Pearson test with critical value
$c:=\frac K{s_0^1}\exp\left(-\int_0^T\rho(u) du \right)$ for the null hypothesis $\{Q\}$ versus $\{Q_1\}$. 
We briefly summarize the following statistical results, see Lehmann and Romano \cite{Lehmann and Romano:05}, p.14 or Witting \cite{Witting:85}, p. 228.
The test $\phi$ is a Bayes test for the prior $\Lambda_0=\frac c{1+c}$ and $\Lambda_1=\frac 1{1+c}$ for testing $\{Q\}$ versus $\{Q_1\}$ 
with corresponding minimal Bayes risk of the test 
$$\frac{s_0^1-p_Q(H_C)}{s_0^1+K \exp\left(- \int_0^T \rho(u)du\right)}.$$
Similar results hold for testing $\{Q_1\}$ versus $\{Q\}$ and $\psi:=1-\phi$.\\
In case of the classical Black-Scholes model $\mathcal{L}\left( \log \frac{dQ_1}{dQ}|Q\right)=N(-\frac{\sigma^2}2T,\sigma^2 T) $ and 
$\mathcal{L}\left( \log \frac{dQ_1}{dQ}|Q_1\right)=N(\frac{\sigma^2}2T,\sigma^2 T) $ are normal distributions, see also Example \ref{ito}. Thus \eqref{pqhc} is just the classical Black-Scholes price of the European call which is treated again in Corollary \ref{bscall} below.
Similarly, the option price of \eqref{a3} is a linear combination of the power of suitable Neyman Pearson tests.
\end{example}

To explain the correspondence between finance and statistics consider the nonparametric regression model
\begin{equation}\label{regrmodel}
\xi(t):=W(t)+\int_0^t \gamma(u)du \quad,~ 0 \le t \le T, 
\end{equation}
with unobservable Brownian motion $W$ as noise and deterministic signal given by $\gamma \in \tilde{\Theta}:=L_2[0,T]$. Their distributions form a Gaussian shift which is just a typical nonparametric limit model, for instance in survival analysis. 
Here $\gamma$ has a proper interpretation as hazard rate derivative, see Janssen and Milbrodt \cite{Janssen and Milbrodt:93} (2.20), (3.20). 
A similar structure shows up for It\^{o} type models in finance where the parameter space 
$\tilde{\Theta}$ is extended to various adapted processes linked to the underlying volatility. 
Then, the price formula in Theorem \ref{preis} has an interpretation for tests and power functions for the new regression model.
The next example explains this relation between It\^{o} type price processes and the appertaining financial experiments.

\begin{example}[It\^o type price processes and regression models] \label{ito} 
For $I=[0,T]$ consider discounted price processes 
\begin{equation}
X_t^i=\frac{S_t^i}{S_t^0}=X_0^i \exp\left( \int_0^t \sigma_i'(s) dW(s) + \int_0^t \left( \mu_i(s)-\rho(s) - \frac{\|\sigma_i(s)\|^2} 2\right)ds \right)  
\end{equation}
where $W$ is a $d$-dimensional Brownian motion with respect to the real world measure $P$.\\
We specify a parameter space $\Theta$ by all volatility matrices $\sigma=(\sigma_{ij})_{i,j=1,...,d}$ 
under the usual conditions: progressive measurability, uniformly positive definiteness (see Korn and Korn \cite{Korn and Korn:01}, p. 57 for the definition) 
and $\sum_{i,j=1}^d \int_0^T \sigma_{ij}^2(u) du<\infty$ uniformly in $\omega$.
Introduce the $d$-dimensional column vector $\sigma_i := (\sigma_{i1},...,\sigma_{id})'.$\\
Let $\rho \ge 0$ and $\mu=(\mu_{1},...,\mu_{d})'$ denote progressively measurable, uniformly bounded processes which specify the interest rate $\rho$ and drift $\mu$, respectively, 
with the bond modeled by \eqref{bond}. At a first glance the reader should view $\sigma, \rho, \mu$ as deterministic functions. 
But there are good reasons to deal with process valued parameters. This special point is discussed at the end of this example.\\
Introduce $\ind=(1,...,1)' \in \mathbb{R}^d$ and set $\theta(s):=\sigma^{-1}(s)[\rho(s)\ind - \mu(s)]$.
By Girsanov's theorem (see Karatzas/Shreve \cite{Karatzas and Shreve:91}, p.191 Theorem 5.1) introduce $Q$: 
$$\frac{dQ}{dP}:=\exp\left(\int_0^T \theta'(s) dW(s) -\frac{1}{2} \int_0^T \|\theta(s)\|^2 ds\right), \quad
\bar{W}(t):=W(t)-\int_0^t \theta(s) ds.$$
$Q$ is a martingale measure and $\bar{W}$ is a $d$-dimensional Brownian motion with respect to $Q$.
Theorem \ref{darstellung} yields 
\begin{align}\label{ito2}
\frac{dQ_{i |\mathcal{F}_t}}{dQ_{|\mathcal{F}_t}}=\exp\left(\int_0^t \sigma_i'(s) d\bar{W}(s) -
\frac{1}{2} \int_0^t \|\sigma_i(u)\|^2 du\right)=\frac{X_t^i}{X_t^0}. 
\end{align}
We will now show how the financial experiment is linked to a $d$-dimensional regression model 
$\xi(t)=(\xi_1(t),...,\xi_d(t))', \quad 0 \le t \le T.$\\
Consider a progressively measurable vector $\tau=(\tau_1,...,\tau_d)'$. 
Based on the parameters $\sigma \in \Theta$, $\mu, \rho, \tau$ the regression model is defined by
\begin{equation}\label{regression}
  \xi(t)=W(t)+ \int_0^t \left(\tau(s) - \theta(s) \right)ds = \bar{W}(t)+ \int_0^t \tau(s) ds, \quad 0 \le t \le T.
 \end{equation}
For simplicity assume that $\Omega=C[0,T]^d$ with $d$-dimensional standard Wiener measure $P$ and identity $W$.
Then the process $(\xi(t))_{t\le T}$ has the distribution $Q$ for $\tau=0$ and $Q_i$ whenever $\tau=\sigma_i$, $i=1,...,d$.
Thus, the appertaining financial experiment is given by the distributions of a regression model when the parameters $\sigma, \mu, \rho$ are deterministic. 
When the parameters are processes we arrive at a generalized regression model. In this case the process
$$\xi(t)=\bar{W}_t+\int_0^t \tau(s) ds =:M_t+A_t$$ decomposes in the $Q$-martingale $\bar{W}_t=M_t$ and the compensator $A_t$ where now $(A_t)_{0 \le t \le T}$ serves as the parameter. 
In statistics that type of generalized models with random predictable parameters and random martingale parts show up for instance for counting processes in survival analysis, see  Andersen, Borgan, Gill and Keiding \cite{Andersen and Borgan and Gill and Keiding:97}.
\end{example}

\section{Changing martingale measures, completeness of markets and the dynamics of price processes}\label{section4}
In this section we will explain the change of martingale measures in terms of statistical experiments, see Remark \ref{experiment}. 
Also the dynamics of option prices in time is studied in terms of financial experiments.
As an application we can characterize the completeness of financial markets, which is connected in some way to the completeness of statistical experiments. We begin by presenting a concept for statistical experiments which factorizes the likelihood ratio via information which is available for a sub-$\sigma$-field $\mathcal{H} \subset \mathcal{F}$. This subject is of its own interest for statistical experiments.\\
In the sequel, consider a dominated experiment $E=(\Omega, \mathcal{F}, \{P_{\theta} : \theta \in \Theta \})$ and a $\sigma$-field $\mathcal{H} \subset \mathcal{F}$. Without loss of generality we can assume the existence of a $\theta_0$ for which $P_{\theta} \ll P_{\theta_0}$ for all $\theta \in \Theta$ since otherwise we could just add a certain $P_{\theta_0}$, which satisfies $P_{\theta} \ll P_{\theta_0}$, to the experiment (see Lehmann and Romano \cite{Lehmann and Romano:05}, p.699 Theorem A.4.2 or Torgersen \cite{Torgersen:91}, p.6). We introduce the restricted experiment $E_{|\mathcal{H}}$ by setting
$$E_{|\mathcal{H}}:=(\Omega, \mathcal{H}, \{P_{\theta|\mathcal{H}} : \theta \in \Theta \}).$$

Some calculations now give us the following elementary lemma.

\begin{lemma}\label{faktorisierung}
 We can factorize the likelihood of the experiment $E$ into the likelihoods of $E_{|\mathcal{H}}$ and an experiment $E'$, called complementary to $E_{|\mathcal{H}}$ with respect to $E$, 
i.e. we have an experiment $E':=(\Omega, \mathcal{F}, \{P'_{\theta} : \theta \in \Theta \})$ with $P'_{\theta_0}=P_{\theta_0}$, such that
\begin{itemize}
 \item[(a)] $\dfrac{dP_{\theta|\mathcal{H}}}{dP_{\theta_0|\mathcal{H}}}=E_{\theta_0}\left[\dfrac{dP_\theta}{dP_{\theta_0}} \Big|\mathcal{H} \right]$ and
 \item[(b)] the densities of $E$ factorize into densities of $E'$ and $E_{|\mathcal{H}}$, i.e.
\begin{equation}\label{dichte} 
\frac{dP_\theta}{dP_{\theta_0}}=\frac{dP'_\theta}{dP_{\theta_0}} \cdot \dfrac{dP_{\theta|\mathcal{H}}}{dP_{\theta_0|\mathcal{H}}}.
\end{equation}
\end{itemize}
\end{lemma}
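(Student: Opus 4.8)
The plan is to define $E'$ by the factorization formula itself, that is, to set the density of $P'_\theta$ with respect to $P_{\theta_0}$ to be exactly the ratio of the two sides of \eqref{dichte}, and then to verify that this ratio is a legitimate probability density and that property (a) holds. First I would fix a version of the Radon–Nikodym derivative $L_\theta := \frac{dP_\theta}{dP_{\theta_0}}$ on $(\Omega,\mathcal F)$, which exists since $P_\theta \ll P_{\theta_0}$. The $\mathcal H$-restricted density $\frac{dP_{\theta|\mathcal H}}{dP_{\theta_0|\mathcal H}}$ is, by the standard characterization of conditional expectation, nothing but $E_{\theta_0}[L_\theta \mid \mathcal H]$: indeed for any $A \in \mathcal H$ we have $P_\theta(A) = \int_A L_\theta \, dP_{\theta_0} = \int_A E_{\theta_0}[L_\theta\mid\mathcal H]\, dP_{\theta_0} = \int_A E_{\theta_0}[L_\theta\mid\mathcal H]\, dP_{\theta_0|\mathcal H}$, and the integrand is $\mathcal H$-measurable. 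This is precisely claim (a), so it comes essentially for free.

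Next I would define $g_\theta := L_\theta / E_{\theta_0}[L_\theta\mid\mathcal H]$, with the convention that $g_\theta := 1$ (or any fixed value) on the $P_{\theta_0}$-null set where the denominator vanishes — note that $\{E_{\theta_0}[L_\theta\mid\mathcal H]=0\}$ forces $L_\theta = 0$ $P_{\theta_0}$-a.s. on that set, by the tower property, so the choice there is immaterial. Then $g_\theta \ge 0$ is $\mathcal F$-measurable, and I set $dP'_\theta := g_\theta\, dP_{\theta_0}$. To see $P'_\theta$ is a probability measure I compute $\int g_\theta \, dP_{\theta_0} = E_{\theta_0}\big[E_{\theta_0}[g_\theta \mid \mathcal H]\big]$, and inside the conditional expectation I pull out the $\mathcal H$-measurable denominator: $E_{\theta_0}[g_\theta\mid\mathcal H] = E_{\theta_0}[L_\theta\mid\mathcal H]^{-1} E_{\theta_0}[L_\theta\mid\mathcal H] = 1$ on the set where the denominator is positive, hence the integral equals $1$. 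For $\theta = \theta_0$ we have $L_{\theta_0}=1$ and $E_{\theta_0}[1\mid\mathcal H]=1$, so $g_{\theta_0}=1$ and $P'_{\theta_0}=P_{\theta_0}$ as required. Finally, the factorization \eqref{dichte} holds by construction: $g_\theta \cdot E_{\theta_0}[L_\theta\mid\mathcal H] = L_\theta$ $P_{\theta_0}$-a.s., which is exactly $\frac{dP'_\theta}{dP_{\theta_0}} \cdot \frac{dP_{\theta|\mathcal H}}{dP_{\theta_0|\mathcal H}} = \frac{dP_\theta}{dP_{\theta_0}}$.

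The only genuinely delicate point is the handling of the null set $\{E_{\theta_0}[L_\theta\mid\mathcal H] = 0\}$, and more generally making sure all the almost-sure identities are compatible across the (possibly uncountable) family $\{\theta \in \Theta\}$ — but since each statement in the lemma is an identity of densities for a single $\theta$ at a time, no simultaneous choice is needed, and the argument above works $\theta$ by $\theta$. I would also remark that the reduction at the start of the section guarantees the dominating $P_{\theta_0}$ exists, so there is no loss of generality. Everything else is a routine application of the tower property of conditional expectation, so I would not belabor it.
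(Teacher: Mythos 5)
Your proposal is correct and follows essentially the same route as the paper's proof: claim (a) is the standard characterization of the restricted density as a conditional expectation, and $P'_\theta$ is constructed by dividing $\frac{dP_\theta}{dP_{\theta_0}}$ by $E_{\theta_0}\left[\frac{dP_\theta}{dP_{\theta_0}}\,\big|\,\mathcal H\right]$ after checking, exactly as the paper does, that $\frac{dP_\theta}{dP_{\theta_0}}$ vanishes $P_{\theta_0}$-a.s.\ on the set where that conditional expectation is zero, with normalization verified by pulling the $\mathcal H$-measurable denominator out of a conditional expectation. The only difference is your convention (value $1$ rather than the paper's $0\cdot\infty=0$) on the exceptional set, which is immaterial for the factorization and, if anything, makes the verification that $P'_\theta(\Omega)=1$ cleaner.
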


\begin{remark}\label{aequi}
$\textit{(a)}$ Statement \eqref{dichte} implies $Cov_{P_{\theta_0}}\left(\dfrac{dP_{\theta|\mathcal{H}}}{dP_{\theta_0|\mathcal{H}}},\dfrac{dP'_\theta}{dP_{\theta_0}}\right)=0$ but nothing about independence in general. 
Note that the covariance can be written as formal expression by \eqref{dichte} without further assumptions about square integrability.
The product structure of the densities does not imply any independence since the dominating measure is no product measure in general.\\
Formula \eqref{dichte} has an interpretation as a factorization of a joint density $f(X,Y)$ into a conditional density $f(X|Y)$ of $X$ given $Y$ and a marginal density $g(Y)$ of $Y$, i.e. $f(X,Y)=f(X|Y)g(Y)$. 
To see this consider the identities $X=id: (\Omega, \mathcal{F}) \rightarrow (\Omega, \mathcal{F}), Y=id: (\Omega, \mathcal{F}) \rightarrow (\Omega, \mathcal{H})$ where $Y$ only collects the information given by $\mathcal{H}$. 
By the sufficiency of $X$ we have 
$$\dfrac{dP_{\theta}}{dP_{\theta_0}}=\dfrac{d\mathcal{L}(X|P_{\theta})}{d\mathcal{L}(X|P_{\theta_0})}
=\dfrac{d\mathcal{L}((X,Y)|P_{\theta})}{d\mathcal{L}((X,Y)|P_{\theta_0})}=:f(X,Y).$$
On the other hand
$$\dfrac{dP_{\theta|\mathcal{H}}}{dP_{\theta_0|\mathcal{H}}}=E_{\theta_0}\left[\dfrac{dP_\theta}{dP_{\theta_0}} \Big|\mathcal{H} \right]
=\dfrac{d\mathcal{L}(Y|P_{\theta})}{d\mathcal{L}(Y|P_{\theta_0})}=:g(Y).$$
Thus $\dfrac{dP'_\theta}{dP_{\theta_0}}=:f(X|Y)$ corresponds to the conditional density of $X$ given $Y$ at least when the conditional distribution of $X$ given $Y$ allows a desintegration w.r.t. a regular conditional distribution 
$P_{\theta_0}(X \in \cdot ~|Y)$. Note that this assumption is only needed for the interpretation; Lemma \ref{faktorisierung} works without this additional assumption.\\
$\textit{(b)}$ If $\dfrac{dP_{\theta|\mathcal{H}}}{dP_{\theta_0|\mathcal{H}}}$ and $\dfrac{dP'_\theta}{dP_{\theta_0}}$ are $P_{\theta_0}$ independent, then it is easy to see that the binary experiments
$\{P_{\theta},P_{\theta_0}\}$ and $\{P_{\theta|\mathcal{H}}\otimes P'_{\theta},P_{\theta_0|\mathcal{H}}\otimes P_{\theta_0}\}$ are equivalent in LeCam's sense, i.e. the distributions of the likelihood ratios are the same.
\end{remark}

\hspace{-.63cm}
\textbf{Proof of Lemma \ref{faktorisierung}:}
 A trivial verification shows $\textit{(a)}$. For the construction of the experiment $E'$ we prove that the left hand side of \eqref{dichte} can be divided by $\dfrac{dP_{\theta|\mathcal{H}}}{dP_{\theta_0|\mathcal{H}}}$. 
To this end, for a fixed $\theta \in \Theta$ set 
$A_\theta := \left\{E_{\theta_0}\left[\frac{dP_\theta}{dP_{\theta_0}} \Big|\mathcal{H} \right]=0\right\}.$
With this notation we get $A_\theta \in \mathcal{H}$ and 
\begin{eqnarray*}
	P_\theta(A_\theta)
	&=& \int_\Omega \frac{dP_\theta}{dP_{\theta_0}} 1_{A_\theta} dP_{\theta_0}
 = \int_\Omega E_{\theta_0}\left[\frac{dP_\theta}{dP_{\theta_0}} 1_{A_\theta}\Bigg|\mathcal{H} \right]dP_{\theta_0}\\
	&=& \int_\Omega 1_{A_\theta} E_{\theta_0}\left[\frac{dP_\theta}{dP_{\theta_0}} \Bigg|\mathcal{H} \right]dP_{\theta_0}=0.
\end{eqnarray*}
Thus, $\frac{dP_\theta}{dP_{\theta_0}} 1_{A_\theta} =0$ $P_{\theta_0}$-almost surely.
Using the usual convention $0 \cdot \infty =0$, this yields
\begin{eqnarray*}
\frac{dP_\theta}{dP_{\theta_0}}=E_{\theta_0}\left[\frac{dP_\theta}{dP_{\theta_0}} \Bigg|\mathcal{H} \right] \cdot f_\theta
\ \textnormal{, where} \ \
f_\theta:=\frac{dP_\theta}{dP_{\theta_0}} \Big/ E_{\theta_0}\left[\frac{dP_\theta}{dP_{\theta_0}} \Bigg|\mathcal{H} \right].
\end{eqnarray*}
It follows $E_{\theta_0}[f_\theta|\mathcal{H} ]=1$ and consequently $E_{\theta_0}(f_\theta)=1$.
Therefore, by setting
$\frac{dP'_\theta}{dP_{\theta_0}}:= f_\theta$, we can define a probability measure $\ P'_\theta$ on $(\Omega, \mathcal{F})$ with $P'_{\theta_0}=P_{\theta_0}$.
Using $\textit{(a)}$, this gives us the desired equation \eqref{dichte}.\\
\hspace*{\fill}\begin{small}$\square$\end{small}

\begin{example}
Let $\mathcal{P}$ be the set of all martingale measures equivalent to $P$.
For a fixed time $t \in I \subset [0,T]$ and a fixed $Q \in \mathcal{P}$ consider the financial experiment $\{Q_1,...,Q_d,Q\}$ together with the filtration $(\mathcal{F}_t)_{t \in I}$ as above.\\
$\textit{(a)}$ The restricted experiment 
$E_{|\mathcal{F}_t}=(\Omega, \mathcal{F}_t, \{Q_{1|\mathcal{F}_t},...,Q_{d|\mathcal{F}_t},Q_{|\mathcal{F}_t}\})$
admits a complementary experiment $E'_t=\{Q'_1(t),...,Q'_d(t),Q(t)\}$ with $Q(t)=Q$.\\
   $\textit{(b)}$ If we turn our attention to the densities of the new experiment $E'_t$, we conclude from \eqref{dichte} and \eqref{likelihood} that 
\begin{equation}\label{komp}
\frac{dQ'_i(t)}{dQ}=\frac{X^i_T}{X^i_t} \quad \textnormal{ and} \quad E_{Q(t)}\left[\frac{dQ'_i(t)}{dQ} \Bigg|\mathcal{F}_{t+s} \right]=\frac{X^i_{t+s}}{X^i_t}, \quad 0 \le s \le T-t
\end{equation}
since $Q \in \mathcal{P}$.
Thus, the new experiment $E'_t$ has a concrete meaning. Whenever the time $t$ is over and the price processes are observed until time $t$, 
the experiment $E'_t$ describes the updated and normalized price process 
$$I \cap [0,T-t]\ni s \mapsto \frac{X^i_{t+s}}{X^i_t} \quad \textnormal{and }~ S^i_{t+s}=S^i_t \exp\left(\int_t^{t+s}\rho(u)du\right)\frac{dQ'_i(t)_{|\mathcal{F}_{t+s}}}{dQ_{|\mathcal{F}_{t+s}}}.$$
\end{example}

\begin{example}[Dynamics of option prices]
The complementary experiment can be used to describe the dynamics of option prices. 
We will illustrate this for the European call option $H_C$ from Example \ref{calloption}. 
For this purpose we may write
$$\phi\left(\frac{dQ_1}{dQ}\right)=\phi\left(\frac{X_t^1}{X_0^1}\cdot\frac{X_T^1}{X_t^1}\right)
=\phi\left(\frac{X_t^1}{X_0^1}\cdot\frac{dQ'_1(t)}{dQ}\right).$$
At time $t$ consider the value $S_t^1=s_t^1$ and introduce a new updated test by
$$\phi_t(x,s):=\phi\left(  \frac{s x}{s_0^1 \exp\left( \int_0^t \rho(u)du\right) } \right)$$
which implies
$$\phi_t\left( \frac{dQ'_1(t)}{dQ},s_t^1\right)
=\phi\left(  \frac{s_t^1}{s_0^1 \exp\left( \int_0^t \rho(u)du\right) } \cdot\frac{dQ'_1(t)}{dQ} \right).$$
If the prices are calculated via the martingale measure $Q$ the price $p_Q(H_C,t)$ of $H_C$ at time $t$ in the classical 
Black-Scholes model with constant parameters $\sigma>0, \mu \in \mathbb{R}$ and $\rho>0$ (see Example \ref{ito}) can be described in terms of the 
complementary experiment similarly to \eqref{pqhc} by
\begin{align*}\label{pqhct}
 p_Q(H_C,t)&=s_t^1 E_{Q'_1(t)}\left( \phi_t\left( \frac{dQ'_1(t)}{dQ},s_t^1\right) \right)\\
&~~~-\exp\left( -\rho(T-t)\right)K E_Q\left( \phi_t\left( \frac{dQ'_1(t)}{dQ},s_t^1\right) \right).
\end{align*}
For complete markets this price is the unique price at time $t$.
This method can also be used in a more general setup where the payoff $H$ of the form \eqref{a} only depends on the last price at time $T$ via $\phi$.
Then, the option price of $H$ at time $t$ looks similar to \eqref{preisformel} and can be expressed with the help of the complementary experiment.
The measures $Q'_i(t)$ can be interpreted as updates of the measures $Q_i$ when time $t$ has passed.
\end{example}

The next lemma will be needed in the remainder of this section.
\begin{lemma}\label{hl1}
Fix two probability measures $P_0,P_1$ on $(\Omega, \mathcal{F})$ with $P_0 \ll P_1$. \\
Furthermore, fix $f \in L_1(P_0)$  and a $\sigma$-field $\mathcal{H} \subset \mathcal{F}$. Then
$$E_{P_1}\left[f\frac{dP_0}{dP_1} \Bigg|\mathcal{H} \right]=E_{P_0}[f | \mathcal{H}] E_{P_1}\left[\frac{dP_0}{dP_1} \Bigg|\mathcal{H} \right].$$
\end{lemma}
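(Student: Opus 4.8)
The plan is to verify the identity directly from the defining property of conditional expectation under $P_1$. Write $Z:=\frac{dP_0}{dP_1}$. First I would record the integrability facts that make every term well defined: since $P_0\ll P_1$ we have $\int |f|Z\,dP_1=\int|f|\,dP_0<\infty$, so $fZ\in L_1(P_1)$ and $E_{P_1}[fZ\mid\mathcal{H}]$ exists; $E_{P_0}[f\mid\mathcal{H}]$ exists because $f\in L_1(P_0)$; and $E_{P_1}[Z\mid\mathcal{H}]\ge 0$. The candidate right-hand side $E_{P_0}[f\mid\mathcal{H}]\,E_{P_1}[Z\mid\mathcal{H}]$ is $\mathcal{H}$-measurable, and applying the pull-out property $\int g\,E_{P_1}[Z\mid\mathcal{H}]\,dP_1=\int gZ\,dP_1$ with the nonnegative $\mathcal{H}$-measurable function $g=|E_{P_0}[f\mid\mathcal{H}]|$ shows it lies in $L_1(P_1)$, since $\int|E_{P_0}[f\mid\mathcal{H}]|Z\,dP_1=\int|E_{P_0}[f\mid\mathcal{H}]|\,dP_0\le\int|f|\,dP_0<\infty$.

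The core computation is then, for an arbitrary $H\in\mathcal{H}$,
\[
\int_H fZ\,dP_1=\int_H f\,dP_0=\int_H E_{P_0}[f\mid\mathcal{H}]\,dP_0=\int_H E_{P_0}[f\mid\mathcal{H}]\,Z\,dP_1=\int_H E_{P_0}[f\mid\mathcal{H}]\,E_{P_1}[Z\mid\mathcal{H}]\,dP_1,
\]
where the first and third equalities are the change-of-measure formula $dP_0=Z\,dP_1$, the second is the definition of $E_{P_0}[f\mid\mathcal{H}]$ (legitimate since $1_H$ is $\mathcal{H}$-measurable), and the last is the pull-out property applied to the $\mathcal{H}$-measurable function $1_H\,E_{P_0}[f\mid\mathcal{H}]$, whose product with $Z$ is $P_1$-integrable by the bound just established. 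Since $H\in\mathcal{H}$ was arbitrary, and the right-hand side of the lemma is $\mathcal{H}$-measurable and $P_1$-integrable, it coincides with $E_{P_1}[fZ\mid\mathcal{H}]$ $P_1$-almost surely by uniqueness of conditional expectation.

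There is essentially no deep obstacle here; the only points demanding care are bookkeeping ones. One must ensure each conditional expectation is defined — hence the preliminary integrability remarks — and, in the third equality above, one needs $1_H\,E_{P_0}[f\mid\mathcal{H}]\,Z\in L_1(P_1)$ rather than merely $1_H\,E_{P_0}[f\mid\mathcal{H}]\in L_1(P_0)$, but these two facts are equivalent via $dP_0=Z\,dP_1$. If one wanted to sidestep even this, one could first prove the statement for bounded $f\ge 0$ and extend by monotone convergence and linearity, but the direct argument is cleaner. I would also note that no assumption of the form $P_1\ll P_0$ is used, so $Z$ may vanish on a set of positive $P_1$-measure without causing any difficulty, the identity being stated entirely in the ``forward'' direction.
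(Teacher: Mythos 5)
Your proof is correct: the paper itself gives no argument for Lemma \ref{hl1} (it is stated as ``well known and left to the reader''), and your direct verification via the defining property of conditional expectation, with change of measure $dP_0=\frac{dP_0}{dP_1}\,dP_1$ and the pull-out property, is exactly the standard proof that is being alluded to. The integrability bookkeeping (using $\int\bigl|E_{P_0}[f\mid\mathcal{H}]\bigr|\,\frac{dP_0}{dP_1}\,dP_1\le\int|f|\,dP_0<\infty$) and the observation that $P_1\ll P_0$ is not needed are both handled correctly, so nothing is missing.
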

The proof is well known and left to the reader.
The next theorem establishes a nice ``martingale measure criterion'' in terms of the complementary experiment.

\begin{theorem}\label{satz1}
 For $Q \in \mathcal{P}$, assume that $Q^{\ast}$ is another probability measure equivalent to $Q$ with $g :=\frac{dQ^{\ast}}{dQ}$. Then the following conditions are equivalent:
\begin{enumerate}
   \item[(1)]  For all $1 \le i \le d$ and $t \in I$:
$E_{Q'_i(t)}[g|\mathcal{F}_t]=E_Q[g|\mathcal{F}_t]$.
   \item[(2)] The probability measure $Q^{\ast}$ is a martingale measure, i.e. $Q^{\ast} \in \mathcal{P}$.
\end{enumerate}
\end{theorem}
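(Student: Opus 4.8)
The plan is to reduce both conditions to a single identity for the underlying martingale measure $Q$, by two applications of the abstract Bayes formula of Lemma \ref{hl1} together with the explicit description \eqref{komp} of the densities of the complementary experiment. The common identity will be
\begin{equation}
E_Q[g\,X^i_T\mid\mathcal{F}_t]=X^i_t\,E_Q[g\mid\mathcal{F}_t],\qquad 1\le i\le d,\ t\in I.\tag{$\ast$}
\end{equation}

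First I would rewrite the left-hand side of $(1)$. Since $dQ'_i(t)/dQ=X^i_T/X^i_t$ by \eqref{komp}, Lemma \ref{hl1} applied with $P_1:=Q$, $P_0:=Q'_i(t)$, $f:=g$ and $\mathcal{H}:=\mathcal{F}_t$ gives
\[
E_{Q'_i(t)}[g\mid\mathcal{F}_t]\;E_Q[X^i_T/X^i_t\mid\mathcal{F}_t]=E_Q[g\,X^i_T/X^i_t\mid\mathcal{F}_t].
\]
Because $Q\in\mathcal{P}$ and $X^i_t$ is $\mathcal{F}_t$-measurable and a.s. positive, $E_Q[X^i_T/X^i_t\mid\mathcal{F}_t]=(1/X^i_t)E_Q[X^i_T\mid\mathcal{F}_t]=1$, so that $E_{Q'_i(t)}[g\mid\mathcal{F}_t]=(1/X^i_t)\,E_Q[g\,X^i_T\mid\mathcal{F}_t]$. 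Multiplying by $X^i_t$ shows that $(1)$ is equivalent to $(\ast)$.

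Next I would match $(\ast)$ with the martingale property of $X^i$ under $Q^{\ast}$. A second application of Lemma \ref{hl1}, now with $P_1:=Q$, $P_0:=Q^{\ast}$, $f:=X^i_T$ and $\mathcal{H}:=\mathcal{F}_t$, yields $E_Q[g\mid\mathcal{F}_t]\,E_{Q^{\ast}}[X^i_T\mid\mathcal{F}_t]=E_Q[g\,X^i_T\mid\mathcal{F}_t]$, and since $g>0$ forces $E_Q[g\mid\mathcal{F}_t]>0$ almost surely, $(\ast)$ is equivalent to $X^i_t=E_{Q^{\ast}}[X^i_T\mid\mathcal{F}_t]$ for all $i$ and all $t\in I$. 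On the finite trading set $I$, with $T\in I$, this last family of identities is in turn just a reformulation of the assertion that $(X^i_t)_{t\in I}$ is a $Q^{\ast}$-martingale for every $i$ (one direction is the tower property, the other is immediate, and integrability of the intermediate $X^i_t$ follows from that of $X^i_T$ under $Q^{\ast}$); as $Q^{\ast}\sim Q\sim P$ is assumed, this is precisely $Q^{\ast}\in\mathcal{P}$. Chaining the equivalences gives $(1)\Leftrightarrow(\ast)\Leftrightarrow(2)$.

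The algebra is short, so the step needing genuine care is the integrability bookkeeping: Lemma \ref{hl1} requires $g\in L_1(Q'_i(t))$ in the first application and $X^i_T\in L_1(Q^{\ast})$ in the second. The former is part of the very meaning of $(1)$ — the conditional expectation $E_{Q'_i(t)}[g\mid\mathcal{F}_t]$ must be well defined — and it is equivalent to $E_Q[g\,X^i_T]<\infty$; evaluating $(\ast)$ at $t=0$, where $\mathcal{F}_0$ is trivial, $X^i_0$ is a constant and $E_Q[g]=1$, then gives $E_Q[g\,X^i_T]=X^i_0<\infty$, i.e. $X^i_T\in L_1(Q^{\ast})$. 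In the direction $(2)\Rightarrow(1)$ this integrability is immediate from $E_{Q^{\ast}}[X^i_T]=X^i_0$. I expect this to be the only real obstacle; once it is dispatched, the proof is essentially two lines of the Bayes formula.
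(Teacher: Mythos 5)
Your proposal is correct and follows essentially the same route as the paper: the same two applications of Lemma \ref{hl1} (first with $P_0=Q'_i(t)$, $f=g$, then with $P_0=Q^{\ast}$, $f=X^i_T$), together with $E_Q[X^i_T/X^i_t\mid\mathcal{F}_t]=1$ from \eqref{komp}, reducing both conditions to $E_Q[gX^i_T\mid\mathcal{F}_t]=X^i_t E_Q[g\mid\mathcal{F}_t]$ and hence to $E_{Q^{\ast}}[X^i_T\mid\mathcal{F}_t]=X^i_t$. Your extra remarks on the positivity of $E_Q[g\mid\mathcal{F}_t]$, the tower-property step, and the integrability bookkeeping are sound refinements of details the paper leaves implicit.
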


\begin{proof}
 Fix $1 \le i \le d$ and $t \in I$. From \eqref{dichte} and $Q \in \mathcal{P}$ we obtain
$$E_Q\left[\frac{dQ'_i(t)}{dQ} \Bigg|\mathcal{F}_t \right]=E_Q\left[\frac{X^i_T}{X^i_t} \Bigg|\mathcal{F}_t \right]=1.$$
Combining this fact with Lemma \ref{hl1} (using the choices $f:=g$, $P_0:=Q'_i(t)$ and $P_1:=Q$), the statement \eqref{dichte} yields
$$E_{Q'_i(t)}[g|\mathcal{F}_t]= E_Q\left[g \frac{dQ'_i(t)}{dQ} \Bigg|\mathcal{F}_t \right]
	= E_Q\left[g \frac{X^i_T}{X^i_t} \Bigg|\mathcal{F}_t \right] = \frac{1}{X^i_t}E_Q[g X^i_T|\mathcal{F}_t].$$
Hence, $\textit{(1)}$ holds if and only if
\begin{eqnarray*}
 E_Q[g X^i_T|\mathcal{F}_t]=X^i_t E_Q[g|\mathcal{F}_t].
\end{eqnarray*}
Applying Lemma \ref{hl1} again (this time with $f:=X^i_T$, $P_0:=Q^{\ast}$ and $P_1:=Q$) leads to
$$E_Q[g X^i_T|\mathcal{F}_t]=E_{Q^{\ast}}[X^i_T|\mathcal{F}_t] E_Q[g|\mathcal{F}_t].$$
Thus, $\textit{(1)}$ is equivalent to $E_{Q^{\ast}}[X^i_T|\mathcal{F}_t] E_Q[g|\mathcal{F}_t]=X^i_t E_Q[g|\mathcal{F}_t]$ or 
$E_{Q^{\ast}}[X^i_T|\mathcal{F}_t]=X^i_t$ for all $1 \le i \le d$ and $t \in I$, respectively. This is the desired conclusion.
\end{proof}

We see that, whenever the structure of the financial experiments $E'_t$ is not rich enough, there may exist more than one martingale measure (i.e. there is more than one function $g$ satisfying $\textit{(1)}$ above). Now we show how this is linked to the notion of completeness of classes of statistical experiments.

\begin{definition}[Completeness of classes of experiments]\label{vollstaendig}
Assume that $\{P_{\theta} : \theta \in \Theta \}$ is a dominated experiment and that there is a
$\theta_0 \in \Theta$ satisfying $P_{\theta} \ll P_{\theta_0}$ for all $\theta \in \Theta$. \\
Consider a class of measurable functions $\mathcal{G} \subset \bigcap_{\theta \in \Theta} L_1(P_{\theta})$ . \\
We call $\mathcal{G}$ complete with respect to $\{P_{\theta} : \theta \in \Theta \}$ if, for every $g \in \mathcal{G}$ satisfying $E_{\theta}(g) = E_{\theta_0}(g)$ for all $\theta \in \Theta$ we have, that $g$ is constant $P_{\theta_0}$-almost surely.
\end{definition}

Like mentioned earlier, the assumption $P_{\theta} \ll P_{\theta_0}$ is not particularly restrictive. \\
Consider a financial experiment $E=\{Q_1,...,Q_d,Q\}$.
Application of Theorem \ref{satz1} now yields the following result.

\begin{theorem}\label{satz2}
 Let $\mathcal{G}$ be the subset of strictly positive functions of $~\bigcap_{i=1}^d L_1(Q_i) \cap L_1(Q)$ where $Q \in \mathcal{P}$ is as before.
 The following conditions are equivalent:
\begin{enumerate}
   \item[(1)] The set of equivalent martingale measures is a singleton, i.e. $\mathcal{P}=\{Q\}$.
   \item[(2)] If $g \in \mathcal{G}$ and if
\begin{equation}\label{dichtebedingung}
 E_{Q'_i(t)}[g|\mathcal{F}_t]=E_Q[g|\mathcal{F}_t]
\end{equation}
holds for all $1 \le i \le d$ and $t \in I$,
then $g$ is constant $Q$-almost surely.
\end{enumerate}
\end{theorem}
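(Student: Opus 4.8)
The plan is to obtain both implications as translations of Theorem \ref{satz1}, which already converts the statement ``$Q^{\ast} \sim Q$ is a martingale measure'' into the conditional-expectation identity $E_{Q'_i(t)}[g\,|\,\mathcal{F}_t]=E_Q[g\,|\,\mathcal{F}_t]$ for $g=dQ^{\ast}/dQ$. The only gap to fill is the dictionary between a density of a martingale measure and an element of $\mathcal{G}$ satisfying \eqref{dichtebedingung}; this amounts to a normalization together with two routine observations: condition \eqref{dichtebedingung} is invariant under multiplying $g$ by a positive constant (linearity of conditional expectation), and the density with respect to $Q$ of any $Q^{\ast}\in\mathcal{P}$ indeed belongs to $\mathcal{G}$.

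For $\textit{(1)} \Rightarrow \textit{(2)}$ I would take $g\in\mathcal{G}$ satisfying \eqref{dichtebedingung}, set $c:=E_Q(g)$, which is finite since $g\in L_1(Q)$ and strictly positive since $g>0$, and define $Q^{\ast}$ by $dQ^{\ast}/dQ:=g/c$. Then $Q^{\ast}$ is a probability measure equivalent to $Q$, and $g/c$ still satisfies \eqref{dichtebedingung}; Theorem \ref{satz1} yields $Q^{\ast}\in\mathcal{P}=\{Q\}$, so $g/c=1$ and $g\equiv c$ is constant $Q$-almost surely.

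For $\textit{(2)} \Rightarrow \textit{(1)}$ I would start from an arbitrary $Q^{\ast}\in\mathcal{P}$. Since $Q^{\ast}\sim P\sim Q$, the density $g:=dQ^{\ast}/dQ$ is strictly positive with $E_Q(g)=1$, hence $g\in L_1(Q)$. To see $g\in L_1(Q_i)$ one computes, using \eqref{likelihood}, the $P$-triviality of $\mathcal{F}_0$ (so that $X_0^i$ is a constant), and the $Q^{\ast}$-martingale property of $(X_t^i)_{t\in I}$,
$$E_{Q_i}(g)=E_Q\!\left(g\,\frac{X_T^i}{X_0^i}\right)=\frac{1}{X_0^i}\,E_{Q^{\ast}}(X_T^i)=\frac{X_0^i}{X_0^i}=1,$$
so $g\in\mathcal{G}$. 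By Theorem \ref{satz1} applied to $Q^{\ast}$, the density $g$ satisfies \eqref{dichtebedingung}, so assumption $\textit{(2)}$ forces $g$ to be constant $Q$-almost surely; since $E_Q(g)=1$ this means $g=1$, i.e.\ $Q^{\ast}=Q$. As $Q^{\ast}\in\mathcal{P}$ was arbitrary, $\mathcal{P}=\{Q\}$.

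I expect the main obstacle to be the verification $g\in L_1(Q_i)$ in the second implication: it is exactly here that the martingale property of the price processes under the competing measure $Q^{\ast}$ and the triviality of $\mathcal{F}_0$ are used, whereas the rest is bookkeeping built directly on Theorem \ref{satz1} and on the homogeneity of condition \eqref{dichtebedingung}.
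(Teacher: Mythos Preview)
Your proof is correct and follows essentially the same route as the paper: both directions are reduced to Theorem \ref{satz1} via the normalization $g\mapsto g/E_Q(g)$ in $\textit{(1)}\Rightarrow\textit{(2)}$ and by taking $g=dQ^{\ast}/dQ$ in $\textit{(2)}\Rightarrow\textit{(1)}$. Your argument is in fact slightly more careful than the paper's, which applies hypothesis $\textit{(2)}$ to $g=dQ^{\ast}/dQ$ without explicitly verifying the integrability condition $g\in L_1(Q_i)$ that you check via $E_{Q_i}(g)=E_{Q^{\ast}}(X_T^i)/X_0^i=1$.
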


\begin{proof}
 Let us first proof $\textit{(1)}$ $\Rightarrow$ $\textit{(2)}$.
Consider $g \in \mathcal{G}$ as in $\textit{(2)}$ above. We set $g^{\ast}:=\frac{g}{E_Q(g)}$ and then 
$Q^{\ast}(A):=\int_A g^{\ast} \; dQ $ for all $A \in \mathcal{F}$ defines a probability measure $Q^{\ast}$ equivalent to $Q$. From \eqref{dichtebedingung} we get
\begin{equation*}
 E_{Q'_i(t)}[g^{\ast}|\mathcal{F}_t]=E_Q[g^{\ast}|\mathcal{F}_t]
\end{equation*}
for all $1 \le i \le d$ and $t \in I$.
By applying \ref{satz1} we conclude that $Q^{\ast}$ is a martingale measure, i.e. $Q^{\ast} \in \mathcal{P}$. From $\textit{(1)}$ we obtain $Q^{\ast}=Q$. 
This gives $g^{\ast}=1$ $Q$-almost surely and therefore $g$ is constant $Q$-almost surely. \\
To prove $\textit{(2)}$ $\Rightarrow$ $\textit{(1)}$ fix $Q^{\ast} \in \mathcal{P}$. 
By setting $g:=\frac{dQ^{\ast}}{dQ}$ Theorem \ref{satz1} shows that \eqref{dichtebedingung} is satisfied for all $1 \le i \le d$ and $t \in I$. 
From $\textit{(2)}$ we deduce that $g$ is constant $Q$-almost surely, but since $E_Q(g)=1$ this forces $g=1$ $Q$-almost surely and hence $Q^{\ast}=Q$.
\end{proof}

\begin{remark}
$\textit{(a)}$ We consider the case where the set $I \subset [0,T]$ of possible trading times is a finite set. Recall from Shiryaev \cite{Shiryaev:99}, p.481 that in this case the condition $| \mathcal{P} |=1$ given in $\textit{(1)}$ in Theorem \ref{satz2} is necessary and sufficient for the completeness of the market, i.e. every claim can be replicated. This result is also known as the second fundamental asset pricing theorem.\\
   $\textit{(b)}$ We get a simple expression for condition $\textit{(2)}$ in Theorem \ref{satz2} in the case when only two times for trading are allowed, i.e. only $t=0$ and $t=T$ are possible. Then \eqref{dichte} yields $Q'_i(T)=Q$ and, in consequence, \eqref{dichtebedingung} always holds for $t=T$. By assumption $\mathcal{F}_0=\sigma(\mathcal{N})$ we get $Q'_i(0)=Q_i$ at the other time for trading $t=0$. Hence, we can rewrite \eqref{dichtebedingung} as $E_{Q_i}(g)=E_Q(g)$ for all $g \in \mathcal{G}$. 
Now $\textit{(2)}$ just restates the completeness of $\mathcal{G}$ with respect to $\{Q_1,...,Q_d,Q\}$.\\
In general, statement $\textit{(2)}$ in Theorem \ref{satz2} can only be interpreted as some sort of conditional completeness (in the sense of Definition \ref{vollstaendig}) for all $t$.
\end{remark}
By combining $\textit{(a)}$ and $\textit{(b)}$ of the previous remark we are now in the position to see that Theorem \ref{satz2} links both concepts of completeness.

\section{Convergence of option prices}\label{section5}
Our goal in this section is to establish convergence results for option prices using the price formulas of section \ref{section3} when the financial experiments are convergent. 
The application of Theorem \ref{limitp} and section \ref{section6} below require further structure results about experiments which are developed next.\\
Throughout, we will consider a financial experiment as in Definition \ref{fe} given by a fixed martingale measure $Q$ and a filtration $(\mathcal{F}_t)_{t \in I}$ 
where $I \subset [0,T]$ satisfying $\{0,T\} \subset I$ denotes a bounded time domain which may be discrete.\\
As a main tool for further results we will derive a standard representation of the financial experiment on the space $(([0,\infty]^d)^I,(\mathcal{B}([0,\infty]^d))^I)$ equipped with the product $\sigma$-field. 
Recall that quite different representations of statistical experiments can be equivalent in LeCam's sense. 
The notion of standard experiments (given by the law of the likelihood) is very useful and leads to better comparable forms, see Strasser \cite{Strasser:85a} and Torgersen \cite{Torgersen:91}.\\
For $J \subset I$ introduce canonical projections
\begin{equation}\label{projektion}
 \pi_J:([0,\infty]^d)^I \rightarrow ([0,\infty]^d)^J
\end{equation}
and as abbreviation write $\pi_t:=(\pi^i_t)_{i \le d}$ for a singleton $J=\{t\}$. The projections induce another filtration
\begin{equation}\label{filtration}
 \mathcal{G}_t:=\sigma(\pi_s : s \in I \cap [0,t]), \quad t \in I
\end{equation}
on $([0,\infty]^d)^I$. In addition for $t \in I$ we define $Y_t: \Omega \rightarrow ([0,\infty]^d)^{I \cap [0,t]}$ by
\begin{equation}\label{yt}
 Y_t:=\left( \left( \frac{X_s^i}{X_0^i}\right)_{i=1,...,d} \right)_{s \in I \cap [0,t]}.
\end{equation}
From Theorem \ref{darstellung} we know that $\dfrac{X_T^i}{X_0^i}=\dfrac{dQ_i}{dQ}$ and we conclude that $Y_T$ is a sufficient statistic for the experiment $\{Q_1,...,Q_d,Q\}$. Thus, we do not lose information if we turn to the experiment given by the image laws of $Y_T$, compare with Lehmann and Romano \cite{Lehmann and Romano:05} for the consequences of sufficiency. Define on $(([0,\infty]^d)^I,(\mathcal{B}([0,\infty]^d))^I)$
\begin{equation}
 \nu:=\mathcal{L}(Y_T | Q), \quad \nu_i:=\mathcal{L}(Y_T | Q_i).
\end{equation}
Within this setup a useful standard form of financial experiments can be established.

\begin{theorem}\label{bild}
 The image experiment $\{\nu_1,...,\nu_d,\nu\}$ together with the filtration $(\mathcal{G}_t)_{t \in I}$ is a filtered financial experiment with price process given by the projections $(\pi_t)_{t \in I}$, i.e.
\begin{equation}\label{darstellung2}
 \dfrac{d\nu_{i|\mathcal{G}_t}}{d\nu_{|\mathcal{G}_t}}=\pi_t^i, \quad i=1,...,d, \ \ t \in I.
\end{equation}
\end{theorem}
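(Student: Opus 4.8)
The plan is to verify the density identity \eqref{darstellung2} by hand and then to read off the rest of the statement from Theorem \ref{darstellung}. First I would dispose of the easy points: from $Q_i\ll Q$ one gets $\nu_i=\mathcal{L}(Y_T\mid Q_i)\ll\mathcal{L}(Y_T\mid Q)=\nu$; the family $(\mathcal{G}_t)_{t\in I}$ of \eqref{filtration} is increasing and $\sigma(\mathcal{G}_t:t\in I)$ is the full product $\sigma$-field; and since $\pi_0^i\circ Y_T=X_0^i/X_0^i=1$ the projections $\pi_0^i$ equal $1$ $\nu$-a.s., while $\pi_t^i\in(0,\infty)$ $\nu$-a.s. because the discounted price ratios $X_t^i/X_0^i$ are finite and strictly positive under $Q$. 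Thus the image object satisfies the standing assumptions of Section \ref{section2} up to $\nu$-null sets, and it only remains to prove \eqref{darstellung2}.

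The core of the argument is a change-of-variables for Radon--Nikodym derivatives under restriction to a pulled-back $\sigma$-field. Set $\mathcal{F}_t^{0}:=Y_T^{-1}(\mathcal{G}_t)$; by \eqref{yt} and \eqref{filtration} this is exactly the natural filtration $\sigma\big((X_s^i/X_0^i)_{i\le d}:s\in I\cap[0,t]\big)$ of the discounted price process, so $\mathcal{F}_t^{0}\subset\mathcal{F}_t$. The general fact I would invoke is: for a measurable map $T$ between measurable spaces, a sub-$\sigma$-field $\mathcal{A}'$ of the target, and dominated measures $\lambda\ll\mu$ on the source, the density $h':=d(\lambda\circ T^{-1})_{|\mathcal{A}'}\big/d(\mu\circ T^{-1})_{|\mathcal{A}'}$ pulls back to $h'\circ T=d\lambda_{|T^{-1}(\mathcal{A}')}\big/d\mu_{|T^{-1}(\mathcal{A}')}$ $\mu$-a.s.; this is checked by noting that $h'\circ T$ is $T^{-1}(\mathcal{A}')$-measurable and that $\int_{T^{-1}A'}(h'\circ T)\,d\mu=\int_{A'}h'\,d(\mu\circ T^{-1})=\lambda(T^{-1}A')$ for every $A'\in\mathcal{A}'$. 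Applied with $T=Y_T$, $\mu=Q$, $\lambda=Q_i$ and $\mathcal{A}'=\mathcal{G}_t$, this gives
\begin{equation*}
 \frac{d\nu_{i|\mathcal{G}_t}}{d\nu_{|\mathcal{G}_t}}\circ Y_T=\frac{dQ_{i|\mathcal{F}_t^{0}}}{dQ_{|\mathcal{F}_t^{0}}}\qquad Q\text{-a.s.}
\end{equation*}

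It remains to identify the right-hand side with $\pi_t^i\circ Y_T$. By Theorem \ref{darstellung} we have $dQ_{i|\mathcal{F}_t}/dQ_{|\mathcal{F}_t}=X_t^i/X_0^i$, which is $\mathcal{F}_t^{0}$-measurable; since $\mathcal{F}_t^{0}\subset\mathcal{F}_t$, the defining relation $Q_i(A)=\int_A (X_t^i/X_0^i)\,dQ$ persists for all $A\in\mathcal{F}_t^{0}$, so $dQ_{i|\mathcal{F}_t^{0}}/dQ_{|\mathcal{F}_t^{0}}=X_t^i/X_0^i$ $Q$-a.s. As $\pi_t^i\circ Y_T=X_t^i/X_0^i$ by \eqref{yt}, the last display becomes $\big(d\nu_{i|\mathcal{G}_t}/d\nu_{|\mathcal{G}_t}\big)\circ Y_T=\pi_t^i\circ Y_T$ $Q$-a.s.; transporting the exceptional set forward through $Y_T$ and using $\nu=\mathcal{L}(Y_T\mid Q)$ yields \eqref{darstellung2} $\nu$-a.s. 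Finally \eqref{darstellung2} together with $\nu_i\ll\nu$ is exactly condition (1) of Theorem \ref{darstellung} for the price process $(\pi_t^i)_{t\in I}$, the filtration $(\mathcal{G}_t)_{t\in I}$ and reference measure $\nu$ on $\big(([0,\infty]^d)^I,(\mathcal{B}([0,\infty]^d))^I\big)$; the implication (1)$\Rightarrow$(2) there shows that $\nu$ is a martingale measure for $(\pi_t^i)$, and Definition \ref{fe} then identifies $\{\nu_1,\dots,\nu_d,\nu\}$ with $(\mathcal{G}_t)_{t\in I}$ as a filtered financial experiment with price process given by the projections.

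I expect the only genuine obstacle to be the measure-theoretic bookkeeping: keeping track of which $\sigma$-field each density lives on, verifying that pushing forward and restricting to a sub-$\sigma$-field commute in the required sense, and being careful that the compactified range $[0,\infty]$ of the projections forces all identities to be read modulo $\nu$-null sets (which is harmless precisely because the price ratios are a.s. finite and positive). None of this is deep, but it must be handled cleanly; once the change-of-variables identity is in place the theorem follows by a direct appeal to Theorem \ref{darstellung}.
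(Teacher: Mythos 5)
Your argument is correct, and it reaches \eqref{darstellung2} by a slightly different organization of the same underlying change-of-variables idea. The paper first proves the identity at the terminal time via the transformation formula for image densities (using that $dQ_i/dQ=\pi_T^i\circ Y_T$ is a function of the sufficient statistic $Y_T$), and then handles a general $t\in I$ by introducing the image laws $\nu_t=\mathcal{L}(Y_t\,|\,Q_{|\mathcal{F}_t})$ and $\nu_t^i=\mathcal{L}(Y_t\,|\,Q_{i|\mathcal{F}_t})$ on the smaller product space $([0,\infty]^d)^{I\cap[0,t]}$, obtaining $d\nu_t^i/d\nu_t=\pi_{J,t}^i$ from the first step and transferring this to $\mathcal{G}_t$ through the identification of $\mathcal{G}_t$-sets with cylinder sets $B=A\times([0,\infty]^d)^{I\setminus J}$, for which $\nu(B)=\nu_t(A)$ and $\nu_i(B)=\nu_t^i(A)$. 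You instead keep the single map $Y_T$ throughout, pull $\mathcal{G}_t$ back to $\mathcal{F}_t^0=Y_T^{-1}(\mathcal{G}_t)\subset\mathcal{F}_t$, prove explicitly that push-forward and restriction to a sub-$\sigma$-field commute, and then use the elementary but key observation that the $\mathcal{F}_t$-density $X_t^i/X_0^i$ from Theorem \ref{darstellung} is already $\mathcal{F}_t^0$-measurable and hence is also the $\mathcal{F}_t^0$-density; this dispenses with the auxiliary spaces and laws $\nu_t,\nu_t^i$ at the price of stating the pull-back lemma and tracking null sets through $Y_T$, which you do correctly. Your closing step (feeding \eqref{darstellung2} and $\nu_i\ll\nu$ back into Theorem \ref{darstellung}, implication (1)$\Rightarrow$(2), to see that $\nu$ is a martingale measure for $(\pi_t)_{t\in I}$ and hence that $\{\nu_1,\dots,\nu_d,\nu\}$ with $(\mathcal{G}_t)_{t\in I}$ is a filtered financial experiment) is consistent with how the paper uses the result, so there is no gap.
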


\begin{proof}
 In a first step, formula \eqref{darstellung2} will be proved for $t=T$. Combining \eqref{darstellung} and \eqref{yt} yields
\begin{equation}
 \dfrac{X_T^i}{X_0^i}=\dfrac{dQ_i}{dQ}=\pi_T^i \circ Y_T.
\end{equation}
Applying the transformation formula for image densities now gives us \eqref{darstellung2}.\\
To prove the general case let
\begin{equation}
 \nu_t:=\mathcal{L}(Y_t|Q)=\mathcal{L}(Y_t|Q_{|\mathcal{F}_t}), \quad \nu_t^i:=\mathcal{L}(Y_t|Q_i)=\mathcal{L}(Y_t|Q_{i|\mathcal{F}_t})
\end{equation}
denote the image laws on $([0,\infty]^d)^{I \cap [0,t]}$. The first step above implies
\begin{equation}\label{darstellung3}
 \dfrac{d\nu_t^i}{d\nu_t}=\pi_{J,t}^i
\end{equation}
where $\pi_{J,t}^i$ is the canonical projection given by \eqref{projektion} but with $J:=I \cap [0,t]$ instead of $I$. \\
Notice that $\mathcal{G}_t$ consists of all sets 
$$B=A \times ([0,\infty]^d)^{I \setminus J}, \quad A \in (\mathcal{B}([0,\infty]^d))^J$$
and that
\begin{equation}
 \nu(B)=\nu_t(A), \quad \nu_i(B)=\nu^i_t(A)
\end{equation}
holds for all $i=1,...,d$. In consequence, \eqref{darstellung3} leads to \eqref{darstellung2}.
\end{proof}

Since our goal is to get results for convergence, we require the concept of contiguity. Contiguity can be interpreted as the asymptotic form of absolute continuity (see for example Lehmann and Romano \cite{Lehmann and Romano:05}, p. 492), or, to be more precise: we call a sequence of probability measures $(P_n)$ contiguous with respect to $(Q_n)$ (where $P_n$ and $Q_n$ are defined on measurable spaces $(\Omega_n, \mathcal{F}_n)$) and write $P_n \triangleleft Q_n$ if for any sequence of sets $A_n \in \mathcal{F}_n$ with $Q_n(A_n) \rightarrow 0$ we have $P_n(A_n) \rightarrow 0$. 
Recall that by LeCam's first lemma it is well known that the contiguity $Q_n \triangleleft P_n$ is equivalent to the tightness of
$\mathcal{L}\left(\log \dfrac{dQ_n}{dP_n}\Big|Q_n\right)$ on $\mathbb{R}$.
Contiguity plays an important role in asymptotic statistics, where one famous result is the third lemma of LeCam (see Jacod and Shiryaev \cite{Jacod and Shiryaev:03}, p.621 Theorem 3.3 for a suitable reference), which we want to apply in the context of statistical experiments.
Contiguity has been put to use in finance, too. For instance, the application of contiguity (see Kabanov and Kramkov \cite{Kabanov and Kramkov:94}, p.184) leads to a criterion for asymptotic arbitrage in large financial markets 
and Hubalek and Schachermayer \cite{Hubalek and Schachermayer:98} discussed the convergence of option prices under contiguity. 
These authors used contiguity of a sequence of martingale measures w.r.t. to the physical measure. 
In contrast to this approach we use contiguity of the measures $Q_i$ and $Q$ of \eqref{darstellung} which contribute to the asset process. 
Recall that for pricing the physical measure is often of minor importance.\\
Consider now a sequence of financial experiments where the time domain $I$ is fixed but everything else in Theorem \ref{darstellung} and Definition \ref{fe} depends on the sequence index $n \in \mathbb{N}$. We write $Q_{1,n},...,Q_{d,n},Q_n$ on measurable spaces $(\Omega_n, \mathcal{F}_n)$ equipped with arbitrary filtrations $(\mathcal{F}_{t,n})_{t \in I}$ and price processes
$$\dfrac{dQ_{i,n|\mathcal{F}_{t,n}}}{dQ_{n|\mathcal{F}_{t,n}}}=\dfrac{X^i_{t,n}}{X^i_{0,n}}.$$\smallskip
Also attach the index $n$ at the vector (\ref{yt}) which we then denote by $Y_{t,n}$.\\
The next Theorem establishes a general convergence result for financial experiments. Of course, the weak compactness of classes of statistical experiments is well known. 
This result is now restated in terms of filtered financial experiments where the accumulation point has a standard form, see Theorem \ref{bild}.
Concrete examples and applications are studied in section \ref{section6}.

\begin{theorem}\label{konvergenz1}
 Consider a sequence $E_n=(\Omega_n, \mathcal{F}_n,\{Q_{1,n},...,Q_{d,n},Q_n\}), (\mathcal{F}_{t,n})_{t \in I}$ of financial experiments such that
$$Q_{i,n} \triangleleft \hspace{-.1cm} \triangleright \hspace{.1cm} Q_n, \quad (\text{ i.e. } Q_{i,n} \triangleleft Q_n \text{ and } Q_n \triangleleft Q_{i,n})$$
holds for all $i=1,..,d$.
Then there exists a standard financial experiment \\
$E=(([0,\infty)^d)^I,(\mathcal{B}([0,\infty)^d))^I,\{\nu_1,...,\nu_d,\nu\})$ given by the price process (\ref{darstellung2}) which is an accumulation point of the underlying sequence in the following sense: there exists a subnet $(\mathcal{I},\le)$ of $(\mathbb{N},\le)$ (see Kelley \cite{Kelley:75}, chapter 2 for the definition) such that all finite dimensional marginal distributions of $(\mathcal{L}(Y_{T,\tau}|Q_{i,\tau}))_{\tau \in \mathcal{I}}$ are weakly convergent to the marginal of $\nu_i$ for all $i=1,..,d+1$, where by convention $\nu_{d+1}=\nu$ and $Q_{d+1,n}=Q_n$.
\end{theorem}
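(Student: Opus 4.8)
The plan is to build the limit experiment as a Kolmogorov projective limit of weak accumulation points of the finite–dimensional marginals, and then to verify that this limit is again a financial experiment by transporting the likelihood identity \eqref{likelihood} (equivalently \eqref{darstellung2}) through the weak limit; the transport is the only delicate point. First I would produce one subnet that works for all marginals simultaneously. For every finite $J\subset I$ the compactified space $([0,\infty]^d)^J$ is compact metrizable, so the set $\mathcal M_1\big(([0,\infty]^d)^J\big)$ of probability measures on it is weakly compact; by Tychonoff's theorem the product $\prod_{J\text{ finite}}\prod_{i=1}^{d+1}\mathcal M_1\big(([0,\infty]^d)^J\big)$ is compact, and $n\mapsto\big(\mathcal L(\pi_J Y_{T,n}\mid Q_{i,n})\big)_{J,i}$ (with the convention $Q_{d+1,n}:=Q_n$) is a net in it. I pick a convergent subnet, indexed by a directed set $(\mathcal I,\le)$ with a monotone cofinal map into $(\mathbb N,\le)$, and call $\mu_i^J$ the weak limit of $\mathcal L(\pi_J Y_{T,\tau}\mid Q_{i,\tau})$. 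Since for $J'\subset J$ the coordinate projection is continuous and sends $\mathcal L(\pi_J Y_{T,\tau}\mid Q_{i,\tau})$ to $\mathcal L(\pi_{J'} Y_{T,\tau}\mid Q_{i,\tau})$, the family $\{\mu_i^J\}_J$ is projective.

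Next I would rule out escape of mass and assemble $\nu_i$. Because $Q_{i,n}\triangleleft\hspace{-.1cm}\triangleright Q_n$ implies $Q_{i,n|\mathcal F_{t,n}}\triangleleft\hspace{-.1cm}\triangleright Q_{n|\mathcal F_{t,n}}$ for each $t$, and the density of the latter pair is $\tfrac{X^i_{t,n}}{X^i_{0,n}}$ by Theorem \ref{darstellung}, LeCam's first lemma yields tightness on $\mathbb R$ of $\mathcal L\big(\log\tfrac{X^i_{t,n}}{X^i_{0,n}}\mid Q_n\big)$ and of $\mathcal L\big(\log\tfrac{X^i_{t,n}}{X^i_{0,n}}\mid Q_{i,n}\big)$; since contiguity preserves tightness, the same holds under every $Q_{j,n}$, $j\le d$. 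Hence no mass escapes to $0$ or $\infty$, so each one–dimensional coordinate marginal of $\mu_i^J$ lives on $(0,\infty)$, and by a union bound over the finitely many coordinates $\mu_i^J$ is concentrated on $([0,\infty)^d)^J$. Viewing $\{\mu_i^J\}_J$ as a projective system of Borel probability measures on the Polish spaces $([0,\infty)^d)^J$ and applying the Kolmogorov extension theorem over the index set $I$, I obtain probability measures $\nu_1,\dots,\nu_d,\nu:=\nu_{d+1}$ on $\big(([0,\infty)^d)^I,(\mathcal B([0,\infty)^d))^I\big)$ whose finite–dimensional marginals are exactly the $\mu_i^J$; these are the weak limits claimed in the theorem.

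It then remains to check that $\{\nu_1,\dots,\nu_d,\nu\}$ with $(\mathcal G_t)_{t\in I}$ satisfies \eqref{darstellung2}, i.e. is a financial experiment in the standard form of Theorem \ref{bild}. Fix $t\in I$ and $i\le d$; by a monotone class argument it is enough to prove $\int h\,d\nu_i=\int h\,\pi_t^i\,d\nu$ for every bounded continuous cylinder function $h$ on $([0,\infty]^d)^{J_0}$, $J_0\subset I\cap[0,t]$ finite. Put $J:=J_0\cup\{t\}$. As $h(\pi_{J_0}Y_{T,\tau})$ is $\mathcal F_{t,\tau}$–measurable and $\tfrac{X^i_{t,\tau}}{X^i_{0,\tau}}=\pi_t^i(Y_{T,\tau})$ is the density of $Q_{i,\tau|\mathcal F_{t,\tau}}$ w.r.t. $Q_{\tau|\mathcal F_{t,\tau}}$ (Theorem \ref{darstellung}), one has the exact identity $E_{Q_{i,\tau}}[h(\pi_{J_0}Y_{T,\tau})]=E_{Q_\tau}[h(\pi_{J_0}Y_{T,\tau})\,\pi_t^i(Y_{T,\tau})]$ for all $\tau$. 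The left side tends to $\int h\,d\mu_i^J=\int h\,d\nu_i$. On the right side, the restriction of $Q_{i,\tau}\triangleleft Q_\tau$ to $\mathcal F_{t,\tau}$ is equivalent, by LeCam's first lemma, to uniform integrability of $\{\pi_t^i(Y_{T,\tau})\}_\tau$ under $Q_\tau$; writing $\chi_M:=\min(\,\cdot\,,M)$, which is bounded continuous on $[0,\infty]$, weak convergence gives $E_{Q_\tau}[h\,\chi_M(\pi_t^i)(Y_{T,\tau})]\to\int h\,\chi_M(\pi_t^i)\,d\mu_i^J$ for fixed $M$, while uniform integrability (together with $E_{Q_\tau}[\pi_t^i(Y_{T,\tau})]=1$) drives $\sup_\tau E_{Q_\tau}[|h|(\pi_t^i-M)^+(Y_{T,\tau})]\to0$ and, by dominated convergence, $\int|h|(\pi_t^i-\chi_M(\pi_t^i))\,d\mu_i^J\to0$ as $M\to\infty$. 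An $\varepsilon/3$–argument then yields $E_{Q_\tau}[h\,\pi_t^i(Y_{T,\tau})]\to\int h\,\pi_t^i\,d\mu_i^J=\int h\,\pi_t^i\,d\nu$, and combining the two limits gives \eqref{darstellung2}; at $t=T$ this reads $\nu_i\ll\nu$, so $\{\nu_1,\dots,\nu_d,\nu\}$ is indeed a financial experiment in standard form with price process $(\pi_t)_{t\in I}$.

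The main obstacle is precisely this last transport: one must integrate the \emph{unbounded} coordinate function $\pi_t^i$ against a weak limit, which is legitimate only because of the uniform–integrability (``no loss of mass'') content of the double contiguity hypothesis, via LeCam's first lemma. Without that hypothesis the limit of $\pi_t^i$ need not remain a $\nu$–martingale, so the accumulation point need not be a financial experiment at all; the compactness and Kolmogorov steps, by contrast, are soft.
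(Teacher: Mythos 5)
Your proof is correct, and it reaches Theorem \ref{konvergenz1} by a route that differs from the paper's in two places. The paper passes to the standard experiment of Theorem \ref{bild}, extends the laws under $Q_n$ to Radon measures on the compact space $([0,\infty]^d)^I$ via the appendix Lemma \ref{radon}, extracts a weak cluster point of these infinite-dimensional laws, and then identifies the limits under $Q_{r,\tau}$ by invoking LeCam's third lemma (the likelihood ratio $X^r_{T,\tau}$ being a coordinate of the converging vector), before assembling $\nu_r$ from the projective system of marginals. You instead extract a single subnet along which all finite-dimensional marginals under all $d+1$ measures converge simultaneously (Tychonoff on products of probability measures over compact metrizable spaces) and build $\nu_1,\dots,\nu_{d+1}$ by Kolmogorov's extension theorem, which makes the Radon-extension lemma unnecessary; and you replace the third lemma by a direct verification of \eqref{darstellung2}: the exact finite-$n$ identity $E_{Q_{i,\tau}}[h]=E_{Q_\tau}\bigl[h\,\pi^i_t(Y_{T,\tau})\bigr]$ is transported to the limit by truncating $\pi^i_t$ and using uniform integrability of the restricted densities under $Q_\tau$. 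That UI statement is indeed equivalent to $Q_{i,n}\triangleleft Q_n$ when $Q_{i,n}\ll Q_n$ (calling it ``LeCam's first lemma'' is a slight misattribution of this standard characterization, but the fact is correct), so in effect you re-prove the special case of the third lemma that is needed, while $Q_n\triangleleft Q_{i,n}$ enters only to keep mass away from the boundary under the $Q_{i,n}$. The paper's route buys brevity and reusability (the same third-lemma/Radon machinery is recycled verbatim in Theorem \ref{konvergenz2}); yours buys a self-contained, more elementary identification step that makes explicit exactly where contiguity is used, at the modest cost of spelling out the Kolmogorov consistency and the passage from weak convergence on $([0,\infty]^d)^J$ to $([0,\infty)^d)^J$, which you correctly reduce to the no-escape-of-mass estimates.
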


\begin{remark}
The convergence in Theorem \ref{konvergenz1} is equivalent to the weak convergence in the sense of LeCam of the experiments
$\{Q_{i,\tau|\mathcal{F}_{t_j,\tau}}:1 \le i \le d+1, 1 \le j \le k\}$ to $\{\nu_{i|\mathcal{G}_{t_j}}:1 \le i \le d+1, 1 \le j \le k\}$ for all finite subsets $\{t_1,...,t_k\} \in I$.
This assertion is an immediate consequence of Strasser \cite{Strasser:85a}, p.302, Theorem 60.3.
\end{remark}

\hspace{-.63cm}
\textbf{Proof of Theorem \ref{konvergenz1}:}
 Let $\{\nu_{1,n},...,\nu_{d,n},\nu_{d+1,n}\}$ denote the financial standard experiment obtained from $\{Q_{1,n},...,Q_{d,n},Q_n\}$ by using Theorem \ref{bild} where $\nu_{d+1,n}$ belongs to the martingale measure $Q_n$. By an embedding argument we may assume that $\nu_{d+1,n}$ is defined on $(([0,\infty]^d)^I,(\mathcal{B}([0,\infty]^d))^I)$.\\
According to Lemma \ref{radon} of the appendix, $\nu_{d+1,n}$ extends to a uniquely determined Radon probability measure on the compact space $([0,\infty]^d)^I$.\\
Since the set of Radon probability measures on compact spaces is weakly compact, the sequence $(\nu_{d+1,n})_{n \in \mathbb{N}}$ has a weak cluster point $\nu$. Hence, there exists a subnet $(\mathcal{I},\le)$ such that $\nu_{d+1,\tau} \rightarrow \nu$ weakly along the subnet $\tau \in \mathcal{I}$.\\
We now turn to the finite dimensional marginals of the likelihood processes. Consider $t_j \in I$, $t_1<t_2<...<t_k=T$. For convenience, assume without restrictions that $X_{0,n}^i=1$ holds for all $i=1,...,d$. Then
\begin{equation}\label{p1}
 \mathcal{L}\left( (X^i_{t_j,\tau})_{(i,j) \in \{1,...,d\}\times\{1,...,k\}}|Q_{\tau}\right)  \rightarrow \mathcal{L}(\pi_J | \nu)
\end{equation}
weakly on $([0,\infty]^d)^J$ where $J=\{t_1,...,t_k\}$ and the projection $\pi_J$ in (\ref{projektion}) is extended to $([0,\infty]^d)^I$. Note that the distributions lie on $([0,\infty)^d)^J$ and that the sequence is tight by Markoff's inequality since
$$\int X^i_{t_{j,n}}dQ_n=1$$
by our assumptions for all $i,n$ and $t_j$. We also remark that a subsequence instead of a net can be used in this situation. \\
Thus, we have weak convergence in (\ref{p1}) on $([0,\infty)^d)^J$ to some distribution $\nu_J$ where $\nu_J$ is the restriction of $\mathcal{L}(\pi_J|\nu)$ to $([0,\infty)^d)^J$. Now $(\nu_J)_{J \subset I, |J|<\infty}$ forms a projective system and, consequently, uniquely determines the distribution $\nu=\nu_{d+1}$ on $([0,\infty)^d)^I$ required in Theorem \ref{konvergenz1}.\\
We proceed to establish convergence under $Q_{r,\tau}$ for each $r \le d$. At this point we apply the third lemma of LeCam (see Jacod and Shiryaev \cite{Jacod and Shiryaev:03}, p.621, Theorem 3.3). We claim that under the distributions $Q_{r,\tau}$
\begin{equation}\label{p2}
 \mathcal{L}\left( (X^i_{t_j,\tau})_{(i,j) \in \{1,...,d\}\times\{1,...,k\}}|Q_{r,\tau}\right)  \rightarrow \nu_{r,J}
\end{equation}
weakly on $([0,\infty]^d)^J$ where $$\dfrac{d\nu_{r,J}}{d\nu_J}\left( x^i_{t_j}\right)_{(i,j) \in \{1,...,d\}\times\{1,...,k\}} =x^r_T$$
and $J \subset I, |J|<\infty$.
Since the likelihood ratio
$$\dfrac{dQ_{r,\tau}}{dQ_{\tau}}=X^r_{T,\tau}$$
is part of (\ref{p2}), LeCam's third lemma establishes the desired result. Again, the system $(\nu_{r,J})_{J \subset I, |J|<\infty}$ forms a projective system and we obtain $\nu_r$ on $([0,\infty)^d)^I$ with $\dfrac{d\nu_r}{d\nu}=\pi^r_T$ where this density formula carries over from the marginals.\\
Now fix $t \in I$ and consider $I'=I \cap [0,t]$. By using the projection as in the proof of Theorem \ref{bild} we obtain
$$\dfrac{d\nu_{r|\mathcal{G}_t}}{d\nu_{|\mathcal{G}_t}}=\pi^r_t$$
and, in consequence, $(\nu_1,...,\nu_d,\nu)$ defines a standard financial experiment.
\hspace*{\fill}\begin{small}$\square$\end{small}\\
${}$\\
Observe that the standard form of financial experiments on $([0,\infty)^d)^I$ allows to fix the filtration \eqref{filtration}.

\begin{lemma}\label{teilfolge}
 The subnet above can be chosen as a subsequence in the following case:\\
There exists a countable subset $J \subset I$ such that each $\nu$ given as an arbitrary accumulation point of the limit experiment above is uniquely determined by its $J$-dimensional marginal distribution $\nu^{\pi_J}$. 
Thus, convergent subsequences exist when for instance $I=[0,T]$ and all accumulation points $\nu$ can be supported by the continuous functions $C[0,T]$.
\end{lemma}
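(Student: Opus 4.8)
The plan is to go back to the proof of Theorem \ref{konvergenz1} and to isolate the one place where a subnet, rather than a subsequence, was unavoidable: the compact space $K:=([0,\infty]^d)^I$ carries, for uncountable $I$, a non-metrizable topology, so the set $M_1(K)$ of Radon probability measures on $K$ is weakly compact (this is what was used to produce $\nu$) but not sequentially compact. The stated hypothesis lets us replace $K$ by the countable sub-product, where metrizability restores sequential compactness, and then transfer the resulting convergence back up to $K$.

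First I would set $K_J:=([0,\infty]^d)^J$. Since $[0,\infty]$ is homeomorphic to $[0,1]$, the factor $[0,\infty]^d$ is compact metrizable, and a countable product of compact metrizable spaces is again compact metrizable; hence $M_1(K_J)$ with the weak topology is compact and metrizable, in particular sequentially compact. Let $\nu_{d+1,n}$ denote the Radon extensions of the standard measures constructed in the proof of Theorem \ref{konvergenz1}, and let $\pi_J:K\to K_J$ be the canonical projection of \eqref{projektion}. Because $\pi_J$ is continuous, pushforward under $\pi_J$ is weakly continuous; the pushforwards $\nu_{d+1,n}^{\pi_J}\in M_1(K_J)$ therefore admit a weakly convergent subsequence, say $\nu_{d+1,n_k}^{\pi_J}\to\mu$.

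Next I would upgrade this to weak convergence of the full measures along $(n_k)$. By Tychonoff's theorem $M_1(K)$ is weakly compact, so $(\nu_{d+1,n_k})_k$ has at least one weak accumulation point $\nu'$, attained along a subnet. Weak continuity of $\pi_J$-pushforward forces $(\nu')^{\pi_J}$ to be the corresponding subnet limit of $\nu_{d+1,n_k}^{\pi_J}$, which equals $\mu$ since that whole sequence already converges to $\mu$. But $\nu'$ is in particular an accumulation point of the original sequence $(\nu_{d+1,n})_n$, hence an accumulation point of the limit experiment in the sense of Theorem \ref{konvergenz1}, so by hypothesis it is pinned down by its $J$-marginal $(\nu')^{\pi_J}=\mu$; consequently any two accumulation points of $(\nu_{d+1,n_k})_k$ coincide. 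A sequence in the compact space $M_1(K)$ with a single accumulation point converges, so $\nu_{d+1,n_k}\to\nu$ weakly for that unique $\nu$. From here the remainder of the proof of Theorem \ref{konvergenz1} goes through verbatim with the subsequence $(n_k)$ in place of the subnet $\mathcal{I}$: the convergence \eqref{p1} was already noted there to hold along subsequences, and the application of LeCam's third lemma yielding \eqref{p2} is a statement about sequences; this produces the standard financial experiment $(\nu_1,\dots,\nu_d,\nu)$ as a subsequential accumulation point.

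Finally, for the displayed sufficient condition I would take $I=[0,T]$ and $J:=(\rat\cap[0,T])\cup\{0,T\}$, which is countable and dense in $[0,T]$. If every accumulation point $\nu$ is carried by the continuous paths $C([0,T],[0,\infty]^d)$, then, since a continuous function on $[0,T]$ is determined by its restriction to the dense set $J$, the map $\pi_J$ is injective on $C([0,T],[0,\infty]^d)$ and a Borel isomorphism onto its image; hence the pushforward $\nu^{\pi_J}$ determines $\nu$, the hypothesis of the lemma holds, and convergent subsequences exist. I expect the main obstacle to be exactly the careful handling of the upgrade step: without metrizability of $K$ one cannot argue directly, and the clean route is the ``unique accumulation point in a compact space'' argument above, taking care to apply the hypothesis in the precise form ``each accumulation point is determined by its $J$-marginal'' and to observe that accumulation points of the subsequence are accumulation points of the whole sequence.
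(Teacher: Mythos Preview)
Your proof is correct and follows essentially the same strategy as the paper: extract a subsequence along which the $J$-marginals converge (using that $([0,\infty]^d)^J$ is compact metrizable for countable $J$), then argue that any two weak accumulation points of the full measures along this subsequence share the same $J$-marginal and hence coincide by hypothesis, so the unique cluster point in the compact space $M_1(K)$ forces convergence. Your write-up is in fact more explicit than the paper's about why countability of $J$ yields sequential compactness, and you additionally verify the $C[0,T]$ sufficient condition (via density of $J$ and injectivity of $\pi_J$ on continuous paths), which the paper states but does not prove.
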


\begin{proof}
 From the proof of Theorem \ref{konvergenz1} we know that the sequence $(\nu_{d+1,n})_{n \in \mathbb{N}}$ has a weak cluster point $\nu$ and that there exists a subnet $(\mathcal{I},\le)$ such that 
$\nu_{d+1,\tau} \rightarrow \nu$ weakly along the subnet $\tau \in \mathcal{I}$. Thus, $\nu_{d+1,\tau}^{\pi_J} \rightarrow \nu^{\pi_J}$ weakly along the subnet. 
Since $J$ is countable, the subnet can be  chosen as a subsequence $(n_k)_{k \in \mathbb{N}}$ and we get $\nu_{d+1,n_k}^{\pi_J} \rightarrow \nu^{\pi_J}$ weakly. 
Consider two weak cluster points $\nu_1$, $\nu_2$ of $\nu_{d+1,n_k}$ along our specified subsequence $(n_k)_{k \in \mathbb{N}}$. 
Then, $\nu_1^{\pi_J}$, $\nu_2^{\pi_J}$ are weak cluster points of $\nu_{d+1,n_k}^{\pi_J}$. Using the fact that $\nu_{d+1,n_k}^{\pi_J}$ converges, we obtain $\nu_1^{\pi_J}=\nu_2^{\pi_J}$. 
Since all weak accumulation points are uniquely determined by its $J$-dimensional marginal distribution, this implies $\nu_1=\nu_2$. In consequence, $\nu_{d+1,n_k}$ has a unique weak cluster point and thus converges.
\end{proof}

The convergence of financial experiments can be used to obtain a limit of option prices which turns out to be an option price for a suitable payoff in the limit experiment.
Again the ideas are borrowed from statistics. It is quite obvious that whenever statistical experiments are weakly convergent the appertaining Neyman Pearson tests and their power functions converge to the corresponding objects 
of the limit experiment. These results apply to the asymptotics of European put and call options, see Example \ref{calloption2} below.
In general each sequence of tests has a so called limit test given by the limit model.\\
For the sequence of financial experiments $E_n$ of Theorem \ref{konvergenz1}, consider price processes 
$S_{t,n}^i, 1 \le i \le d$, with initial values $S_{0,n}^i=s_{0,n}^i$, such that
$$\dfrac{dQ_{i,n|\mathcal{F}_{t,n}}}{dQ_{n|\mathcal{F}_{t,n}}}=\frac{S_{t,n}^i}{S_t^0 s_{0,n}^i}$$ 
holds where the bond $S_t^0=\exp\left(\int_0^t \rho(u) du\right)$ is kept fixed.
Suppose that the initial values converge, i.e. $s_{0,n}^i \rightarrow s_0^i$ for each $1 \le i \le d$ as $n \rightarrow \infty$.
Let $H_n:\Omega_n \rightarrow \mathbb{R}$ be payoff functions of type \eqref{a} with coefficients $a_{i,n},K_n$ and $a_{i,n} \rightarrow a_i$, $K_n \rightarrow K$ as $n \rightarrow \infty$ for all $1 \le i \le d$,
\begin{equation}\label{hn}
 H_n=\left(\sum_{i=1}^d a_{i,n} S_{T,n}^i - K_n\right) \phi^{(n)}\left(  \tilde{S}_n\right)
\end{equation}
with $\tilde{S}_n:=\left(\left( \frac{S_{t,n}^i}{s_{0,n}^i \exp\left( \int_0^t \rho(u)du\right) }\right)_{t \le T} \right)_{i=1,...d}$
where $\phi^{(n)}:([0,\infty)^d)^I \rightarrow [0,1]$ are suitable tests.

\begin{theorem}\label{limitp}
 Under the assumptions of Theorem \ref{konvergenz1} consider a weak accumulation point $E=(([0,\infty)^d)^I,(\mathcal{B}([0,\infty)^d))^I,\{\nu_1,...,\nu_d,\nu\})$ of $E_n$. 
Then, there exist a subsequence $(n_k)_{n \in \mathbb{N}}$, a test $\phi:([0,\infty)^d)^I \rightarrow [0,1]$ and a payoff function $H$ for the limit experiment $E$ given by 
$$ H=\left(\sum_{i=1}^d a_i S_T^i - K\right)\phi\left( \tilde{S} \right) $$ where
$$S_t^i=s_0^i \exp\left( \int_0^t \rho(u)du\right) \pi_t^i = s_0^i \exp\left( \int_0^t \rho(u)du\right) \dfrac{d\nu_{i|\mathcal{G}_t}}{d\nu_{|\mathcal{G}_t}}$$
such that the price converges
$p_{Q_{n_k}}(H_{n_k}) \rightarrow p_{\nu}(H)$ as $k \rightarrow \infty$.
For the limit we obtain the option price \begin{eqnarray*}
p_{\nu}(H) = \sum_{i=1}^d a_i s_0^i E_{\nu_i}\left( \phi\left( \tilde{\pi} \right)\right)  
-\exp\left( -\int_0^T \rho(u)du\right) K E_{\nu}\left( \phi\left( \tilde{\pi}\right)\right) 
\end{eqnarray*}
given by the limit experiment with $\tilde{\pi}:=\left(\left( \pi_t^i\right)_{t \le T}\right)_{i=1,...,d}$. Again the results also hold for linear combinations \eqref{a2}.
\end{theorem}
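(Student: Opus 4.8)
The plan is to read the price $p_{Q_n}(H_n)$ off from Theorem~\ref{preis}, transfer everything to the standard representation of Theorem~\ref{bild}, and then manufacture a ``limit test'' by a compactness argument, the contiguity hypothesis being used precisely to carry the (unbounded) likelihood ratios through the weak limit. \textbf{Step 1 (reduction).} Writing $\bar\rho:=\exp\!\big(-\int_0^T\rho(u)\,du\big)$ and recalling from \eqref{yt} that the density process of $E_n$ at time $t$ is $Y_{t,n}=(X^i_{t,n}/X^i_{0,n})_{i\le d}$, Theorem~\ref{preis} applied to $H_n$ in \eqref{hn} gives
\[
 p_{Q_n}(H_n)=\sum_{i=1}^d a_{i,n}s^i_{0,n}\,E_{Q_{i,n}}\!\big(\phi^{(n)}(Y_{T,n})\big)-\bar\rho\,K_n\,E_{Q_n}\!\big(\phi^{(n)}(Y_{T,n})\big).
\]
Since $Y_{T,n}$ is sufficient, with image laws $\nu_{i,n}=\mathcal L(Y_{T,n}\mid Q_{i,n})$ and $\nu_{d+1,n}=\mathcal L(Y_{T,n}\mid Q_n)$ as in Theorem~\ref{bild}, this equals $\sum_{i=1}^d a_{i,n}s^i_{0,n}E_{\nu_{i,n}}(\phi^{(n)})-\bar\rho K_n E_{\nu_{d+1,n}}(\phi^{(n)})$. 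As the coefficients converge and each $E_{\nu_{i,n}}(\phi^{(n)})\in[0,1]$, it suffices to find a subsequence and one test $\phi$ on $([0,\infty)^d)^I$ with $E_{\nu_{i,n}}(\phi^{(n)})\to E_{\nu_i}(\phi)$ for all $i=1,\dots,d+1$; the displayed formula for $p_\nu(H)$ is then just Theorem~\ref{preis} read in the limit experiment $\{\nu_1,\dots,\nu_d,\nu\}$.

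\textbf{Step 2 (a limit test).} I would embed the $\nu_{i,n}$ on the compact space $\mathbf K:=([0,\infty]^d)^I$ and pass to the sub-probability Radon measures $\lambda_{i,n}:=\phi^{(n)}\!\cdot\nu_{i,n}$, $i\le d+1$. Weak compactness of the sub-probability Radon measures on $\mathbf K$ (finitely many indices) lets me refine the subnet realising the accumulation point $E$ of Theorem~\ref{konvergenz1} so that, along it, $\nu_{i,n}\to\nu_i$ and $\lambda_{i,n}\to\lambda_i$ weakly for every $i$. Testing $\lambda_{i,n}\le\nu_{i,n}$ against nonnegative continuous functions gives $\lambda_i\le\nu_i$, hence $\lambda_i=\varphi_i\,\nu_i$ with $0\le\varphi_i\le1$; testing against the constant $1$ gives $E_{\nu_{i,n}}(\phi^{(n)})=\lambda_{i,n}(\mathbf K)\to\lambda_i(\mathbf K)=E_{\nu_i}(\varphi_i)=:\beta_i$.

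\textbf{Step 3 (the crux).} It remains to see that the $\varphi_i$ are one and the same test. By Theorem~\ref{bild} one has \emph{exactly} $d\nu_{i,n}/d\nu_{d+1,n}=\pi^i_T$, hence $\lambda_{i,n}=\pi^i_T\!\cdot\lambda_{d+1,n}$, and I want to pass this identity to the limit. This is where the hypothesis enters: on the standard space $\pi^i_T$ is the likelihood ratio $dQ_{i,n}/dQ_n$, and the contiguity $Q_{i,n}\triangleleft Q_n$ is exactly uniform integrability of $(dQ_{i,n}/dQ_n)_n$ under $Q_n=\nu_{d+1,n}$ (LeCam's first lemma, using $E_{Q_n}(dQ_{i,n}/dQ_n)=1$ and Markov's inequality); so $\pi^i_T$ is uniformly integrable under $\lambda_{d+1,n}\le\nu_{d+1,n}$. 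The truncation $\pi^i_T=(\pi^i_T\wedge M)+(\pi^i_T-M)^+$ — the first term bounded and continuous on $\mathbf K$, the contributions of the second term to all the relevant integrals vanishing uniformly in $n$ as $M\to\infty$ by this uniform integrability and by $E_\nu(\pi^i_T)=1$ on the limit side — yields $\pi^i_T\!\cdot\lambda_{d+1,n}\to\pi^i_T\!\cdot\lambda_{d+1}$ weakly. Uniqueness of weak limits then gives $\lambda_i=\pi^i_T\!\cdot\lambda_{d+1}$; combining with $d\nu_i/d\nu=\pi^i_T$ from Theorem~\ref{konvergenz1} and setting $\phi:=d\lambda_{d+1}/d\nu\in[0,1]$ gives $\lambda_i=\phi\cdot\nu_i$, i.e.\ $\varphi_i=\phi$ $\nu_i$-almost surely for all $i$. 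I expect this transport of the unbounded $\pi^i_T$ through the weak limit to be the only real difficulty; everything else is bookkeeping.

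\textbf{Step 4 (conclusion).} Here $\phi$ is a measurable $[0,1]$-valued function on $([0,\infty)^d)^I$ (where $\nu$ and each $\nu_i$ are carried), i.e.\ a test, and $\phi=\phi(\tilde\pi)$ with $\tilde\pi=((\pi^i_t)_{t\le T})_i$. Steps~1--3 then give $p_{Q_n}(H_n)\to\sum_{i=1}^d a_i s^i_0 E_{\nu_i}(\phi(\tilde\pi))-\bar\rho K E_\nu(\phi(\tilde\pi))=p_\nu(H)$ along the chosen subnet, with $H$ as in the statement (Theorem~\ref{preis} in the limit experiment). To replace the subnet by an honest subsequence $(n_k)$ I would note that only the finitely many bounded real sequences $E_{\nu_{i,n}}(\phi^{(n)})$ are in play and that the standard experiments are determined by countably many marginals, so a diagonal extraction — exactly in the spirit of Lemma~\ref{teilfolge} — produces a subsequence along which all of the above convergences persist. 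Finally, the extension to finite sums \eqref{a2} is immediate from the linearity of $p_{Q_n}(\cdot)$ and of the price formula in Theorem~\ref{preis}.
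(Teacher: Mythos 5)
Your proposal is correct, but it takes a different route for the key step than the paper does. The paper's own proof of Theorem \ref{limitp} is a short reduction: by Theorem \ref{preis} the price $p_{Q_n}(H_n)$ is a linear combination of the powers $E_{Q_{i,n}}(\phi^{(n)})$, and the existence of a subsequence and a limit test $\phi$ with $E_{Q_{i,n_k}}(\phi^{(n_k)})\to E_{\nu_i}(\phi)$ is delegated to Theorem \ref{konvergenz2}, whose proof compactifies the joint law $\mathcal{L}((\phi_n,Y_{T,n})\,|\,Q_n)$ on $[0,1]\times([0,\infty]^d)^I$, identifies the limits under $Q_{r,n}$ via LeCam's third lemma, and obtains the limit test as the conditional expectation $E(p_1\,|\,p_2)$ of the test coordinate given the sufficient second coordinate. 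You instead exploit that the tests in \eqref{hn} already factor through the price path $Y_{T,n}$, so you can work directly with the sub-probability measures $\phi^{(n)}\cdot\nu_{i,n}$: weak compactness on the compact path space gives limits $\lambda_i\le\nu_i$, and contiguity, recast (correctly) as uniform integrability of the likelihood ratios $\pi^i_T$ under $\nu_{d+1,n}$, lets you push the exact identity $\lambda_{i,n}=\pi^i_T\cdot\lambda_{d+1,n}$ through the weak limit by truncation, so that $\phi:=d\lambda_{d+1}/d\nu$ serves as the common limit test. This replaces LeCam's third lemma and the conditioning step by an elementary truncation argument, which is a legitimate simplification here; what the paper's route buys is generality, since Theorem \ref{konvergenz2} handles arbitrary tests $\phi_n$ on $\Omega_n$ that need not be functions of the price path, which is exactly what the Rao--Blackwell-type conditional expectation is for. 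Two minor points: restricting $\phi$ from $([0,\infty]^d)^I$ to $([0,\infty)^d)^I$ requires the outer-measure-one observation made at the end of the paper's proof of Theorem \ref{konvergenz2}; and for trading the subnet for a subsequence the citation of Lemma \ref{teilfolge} is not the pertinent one --- what suffices (and what you in fact state, matching the paper) is that only finitely many bounded real quantities are involved, so cofinality of the subnet yields a subsequence along which they converge to the same limits.
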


The proof relies on a slight extension of LeCam's ``main theorem of asymptotic testing'' and as such it is modified to reduce subnets to subsequences. 
Roughly speaking, LeCam's result states that, along subsequences, power functions of tests converge to the power function of a limiting test for the limit experiment.
Again this type of result is known also for general decision functions. For convenience we give a self contained proof of financial experiments.

\begin{theorem}\label{konvergenz2}
 Under the assumptions of Theorem \ref{konvergenz1} consider a weak cluster point $\{\nu_1,...,\nu_d,\nu\}$ of the sequence of financial experiments on $([0,\infty)^d)^I$ given by the canonical price process
$$\pi^i_t=\dfrac{d\nu_{i|\mathcal{G}_t}}{d\nu_{|\mathcal{G}_t}}, \quad i=1,...,d.$$
Let $\phi_n:\Omega_n \rightarrow [0,1]$ be a sequence of tests. Then, there exists a test $\phi:([0,\infty)^d)^I \rightarrow [0,1]$, called limit test, and  a subsequence $(n_k)_{k \in \mathbb{N}} \subset \mathbb{N}$ with
$$E_{Q_{i,n_k}}(\phi_{n_k}) \rightarrow E_{\nu_i}(\phi) \quad \textnormal{as } k \rightarrow \infty$$
for each $i=1,...,d+1$.
\end{theorem}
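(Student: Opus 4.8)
The plan is to realize the sequence of tests $\phi_n$ as elements of a suitable compact function space and extract a weakly convergent subnet, then promote the subnet to a subsequence using the separability structure already exploited in Theorem \ref{konvergenz1}. Concretely, I would first pass to the standard representation: by Theorem \ref{bild}, replace each $E_n$ by its image experiment $\{\nu_{1,n},\dots,\nu_{d,n},\nu_{d+1,n}\}$ on $(([0,\infty]^d)^I,(\mathcal{B}([0,\infty]^d))^I)$ via the sufficient statistic $Y_{T,n}$, and correspondingly replace $\phi_n$ by its factorization through $Y_{T,n}$, i.e. a test $\tilde\phi_n:([0,\infty]^d)^I\to[0,1]$ with $E_{Q_{i,n}}(\phi_n)=E_{\nu_{i,n}}(\tilde\phi_n)$; sufficiency guarantees no loss here. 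As in the proof of Theorem \ref{konvergenz1}, extend $\nu_{d+1,n}$ (hence also each $\nu_{i,n}$, using $d\nu_{i,n}/d\nu_{d+1,n}=\pi^i_T$) to Radon measures on the compact space $([0,\infty]^d)^I$, and fix the subnet $(\mathcal I,\le)$ along which $\nu_{i,\tau}\to\nu_i$ weakly for all $i=1,\dots,d+1$.

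Next I would invoke the standard weak-compactness argument for tests (LeCam's ``main theorem of asymptotic testing''): the set of all measurable functions $\Omega\to[0,1]$, viewed inside $\prod L_\infty$, is weak-$\ast$ sequentially (more precisely, netwise) compact, so along a further refinement of the subnet there is a test $\phi$ on $([0,\infty]^d)^I$ such that $E_{\nu_{i,\tau}}(\tilde\phi_\tau)\to E_{\nu_i}(\phi)$ for every $i$. The key point making this work is that $\tilde\phi_\tau$ and $\nu_{i,\tau}$ converge in a compatible sense: one tests $\tilde\phi_\tau$ against a fixed dominating measure and uses the weak convergence $\nu_{i,\tau}\to\nu_i$ together with the uniform boundedness of the densities on the relevant compacts (exactly the tightness furnished by $\int\pi^i_T\,d\nu_{d+1,\tau}=1$, already used in Theorem \ref{konvergenz1}). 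This yields the convergence of power functions to that of a limit test for the limit experiment $\{\nu_1,\dots,\nu_d,\nu\}$, but still only along a subnet.

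The remaining — and I expect main — technical obstacle is the reduction from a subnet to a genuine subsequence $(n_k)_{k\in\mathbb N}$, since the theorem explicitly asserts a subsequence. Here I would mimic Lemma \ref{teilfolge}: the relevant quantities $E_{Q_{i,n}}(\phi_n)=E_{\nu_{i,n}}(\tilde\phi_n)$ are real numbers in the compact interval $[0,1]$ for each of the finitely many indices $i=1,\dots,d+1$, so the vector $\big(E_{Q_{1,n}}(\phi_n),\dots,E_{Q_{d+1,n}}(\phi_n)\big)$ lives in the compact metrizable space $[0,1]^{d+1}$; hence it has a convergent subsequence, say along $(n_k)$, with limit $(c_1,\dots,c_{d+1})$. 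It then suffices to produce a single limit test $\phi$ with $E_{\nu_i}(\phi)=c_i$ for all $i$, which follows by applying the subnet/weak-compactness argument of the previous paragraph to the already-chosen subsequence $(n_k)$: any subnet of $(n_k)$ produces some limit test whose power vector is the limit of the (now convergent) numerical sequence, namely $(c_1,\dots,c_{d+1})$, so the test obtained along one such subnet already does the job along the full subsequence $(n_k)$. Packaging these steps — sufficiency reduction, Radon extension and the fixed limit experiment from Theorem \ref{konvergenz1}, netwise compactness of tests, and the numerical subsequence extraction — gives the claim; the only place real care is needed is checking that the limit test obtained lives on $([0,\infty)^d)^I$ rather than the compactification, which again uses the tightness $\int\pi^i_T\,d\nu=1$ inherited in the limit.
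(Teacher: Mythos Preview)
Your overall architecture is sound and matches the paper: first pass to a subsequence along which the power vector $(E_{Q_{1,n}}(\phi_n),\ldots,E_{Q_{d+1,n}}(\phi_n))\in[0,1]^{d+1}$ converges to some $(b_1,\ldots,b_{d+1})$, then produce a single test $\phi$ for the limit experiment with $E_{\nu_i}(\phi)=b_i$. The paper does the numerical extraction at the very start and you postpone it to the end, but the logic is identical.

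The gap is the middle step, where you invoke ``weak-$\ast$ compactness of tests''. That argument needs the tests to sit in the unit ball of a \emph{fixed} $L_\infty(\mu)$ so that subnet convergence against every $f\in L_1(\mu)$ is available. Here the dominating measures $\nu_{d+1,n}$ vary with $n$ and there is no natural common $\mu$; your patch (``test $\tilde\phi_\tau$ against a fixed dominating measure \ldots\ uniform boundedness of the densities on the relevant compacts'') does not go through, since the densities $d\nu_{i,n}/d\nu_{d+1,n}=\pi^i_T$ are unbounded, and even for a \emph{fixed} discontinuous $\tilde\phi$ the weak convergence $\nu_{i,\tau}\to\nu_i$ of Radon measures does not force $\int\tilde\phi\,d\nu_{i,\tau}\to\int\tilde\phi\,d\nu_i$. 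So as written the construction of the limit test has a real hole.

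The paper closes it by a different device (the standard LeCam trick): rather than factoring $\phi_n$ through $Y_{T,n}$, treat $\phi_n$ as a $[0,1]$-valued \emph{random variable} and pass to the joint law $\mathcal L\bigl((\phi_n,Y_{T,n})\,\big|\,Q_n\bigr)$ on the compact space $[0,1]\times([0,\infty]^d)^I$. Along a subnet this converges weakly to some $\rho_{d+1}$ with second marginal $\nu$; LeCam's third lemma (exactly as in the proof of Theorem~\ref{konvergenz1}, using that the likelihood ratio $\pi^r_T$ is a coordinate of $Y_{T,n}$) upgrades this to convergence of the joint laws under each $Q_{r,\tau}$ to measures $\rho_r$ with $d\rho_r/d\rho_{d+1}=\pi^r_T\ind_{\{\pi^r_T<\infty\}}$. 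The second projection $p_2$ is then sufficient for $\{\rho_1,\ldots,\rho_{d+1}\}$, and the limit test is the conditional expectation $\phi=E_{\cdot}(p_1\mid p_2)$, independent of $r$ by sufficiency, which automatically gives $E_{\nu_r}(\phi)=\int p_1\,d\rho_r=b_r$. This joint-law-plus-conditioning step replaces your weak-$\ast$ step entirely; the final restriction of $\phi$ from the compactification to $([0,\infty)^d)^I$ uses exactly the tightness you identify.
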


\begin{proof}
 This proof is a modification of the proof of LeCam's main theorem of asymptotic testing, see Strasser \cite{Strasser:85a} (sect. 62), Strasser \cite{Strasser:85b} and R\"uschendorf \cite{Rueschendorf:88} (p. 157) for a special case.
We will indicate the steps which are similar to the proof of Theorem \ref{konvergenz1}.\\
By taking subsequences we may assume without restrictions that $E_{Q_{i,n}}(\phi_n) \rightarrow b_i$ as $n \rightarrow \infty$ simultaneously for all $i \le d$. Our aim is to find a test $\phi$ with power $b_i=E_{\nu_i}(\phi)$.
Consider first a weak cluster point $\rho_{d+1}$ of $\mathcal{L}((\phi_n,Y_{T,n})|Q_n)$ on $[0,1]\times ([0,\infty]^d)^I$ where $\rho_{d+1}$ has the second marginal $\nu_{d+1}$ of Theorem \ref{konvergenz1}. 
As in that proof, we have 
$$\mathcal{L}((\phi_{\tau},Y_{T,\tau})|Q_{r,\tau}) \rightarrow \rho_r$$
weakly along a subnet where
\begin{equation}\label{z1}
\dfrac{d\rho_r}{d\rho_{d+1}}=\tilde{\pi}^r_T \ind_{\{\tilde{\pi}^r_T<\infty\}} 
\end{equation}
and $\tilde{\pi}^r_T: [0,1]\times ([0,\infty]^d)^I \rightarrow [0,\infty]$ is the projection on the $r$-th component at time $T$, similar to \eqref{projektion}. With $p_1:[0,1]\times ([0,\infty]^d)^I \rightarrow [0,1]$ we denote the projection on the first component and with $p_2:[0,1]\times ([0,\infty]^d)^I \rightarrow ([0,\infty]^d)^I$ the projection on the second component. By \eqref{z1} the projection $p_2$ is sufficient for $\{\rho_1,...,\rho_{d+1}\}$ with
$$\mathcal{L}(p_2|\rho_r)=\mathcal{L}(S|\nu_r)=:\tilde{\nu}_r$$
where $S:([0,\infty)^d)^I \rightarrow ([0,\infty]^d)^I$ is the embedding and $\nu_1,...,\nu_r$ are the same as in Theorem \ref{konvergenz1}.\\
Now choose $\tilde{\phi}((x_t^i)_{(i,t) \in \{1,...,d\}\times I})=E_{\centerdot}\left(p_1|p_2=(x_t^i)_{(i,t) \in \{1,...,d\}\times I}\right)$
as the conditional expectation which is independent of $\{\tilde{\nu}_1,...,\tilde{\nu}_{d+1}\}$.\\
There exists a subsequence $(n_k)_{k \in \mathbb{N}}$ of the subnet such that the real numbers are convergent
\begin{eqnarray*}
 E_{Q_{r,n_k}}(\phi_{n_k}) &\rightarrow& \int p_1 d\rho_r\\
&=& \int E_{\centerdot}\left(p_1|p_2=(x_t^i)_{(i,t) \in \{1,...,d\}\times I}\right) d\mathcal{L}(p_2|\rho_r) = \int \tilde{\phi} d\tilde{\nu}_r
\end{eqnarray*}
for each $r=1,...,d+1$.\\
As in the proof of Theorem \ref{konvergenz1}, we observe that $([0,\infty)^d)^I$ has $\tilde{\nu}_r$ outer measure $1$. Thus, we may choose $\phi$ as the restriction of $\tilde{\phi}$ on $([0,\infty)^d)^I$ and we get $\int \tilde{\phi} d\tilde{\nu}_r=\int \phi d\nu_r$
where on $([0,\infty)^d)^I$ the measure $\nu_r$ coincides with the outer measure extension of $\tilde{\nu}_r$.
\end{proof}

\hspace{-.63cm}
\textbf{Proof of Theorem \ref{limitp}:}
We observe that, by Theorem \ref{preis}, $p_{Q_n}(H_n)$ is a finite sum of power functions 
$E_{Q_{i,n}}\left(\phi^{(n)}\left(\left(\dfrac{dQ_{1,n|\mathcal{F}_t}}{dQ_{n|\mathcal{F}_t}}\right)_{t \le T},...,\left(\dfrac{dQ_{d,n|\mathcal{F}_t}}{dQ_{n|\mathcal{F}_t}}\right)_{t \le T}\right)\right)$.
By using Theorem \ref{konvergenz2} we find a subsequence $(n_k)_{k \in \mathbb{N}}$ and a test $\phi$ such that
$$E_{Q_{i,n_k}}(\phi^{(n_k)}) \rightarrow E_{\nu_i}(\phi)$$
as $k \rightarrow \infty$ for each $i \le d$.
Thus, Theorem \ref{preis} implies the result.
\hspace*{\fill}\begin{small}$\square$\end{small}
\\
\bigskip
\\
An easy application can be given for the European call option, see Example \ref{calloption} and Remark \ref{np} below.

\begin{example}[Example \ref{calloption} continued]\label{calloption2}
 Let $H_C^n=(S_{T,n}^1-K)\ind_{\{S_{T,n}^1>K\}}$ denote a sequence of European call options, see \eqref{call}. It is well known that the possible limit tests of the Neyman Pearson tests 
$\phi^n\left( \dfrac{dQ_{1,n}}{dQ_n}\right)=\ind_{\left\{\frac{dQ_{1,n}}{dQ_n}>K (s_{0,n}^1)^{-1} \exp\left(-\int_0^T\rho(u) du \right) \right\}}$
are the Neyman Pearson tests $\phi=\ind_{\left\{\frac{d\nu_1}{d\nu}>K (s_0^1)^{-1} \exp\left(-\int_0^T\rho(u) du \right) \right\}}$ of the limit experiment. We conclude that, along subsequences, the option prices converge to the option price of the European call in the limit. It is also well known that the appertaining Bayes risks are convergent. For convenience we assumed continuous likelihood distributions in the limit. Thus, randomization of tests was not necessary. Taking a proper randomization into account, LeCam's theory offers the same result without continuity restrictions.
\end{example}

\section{Discrete time model approximation}\label{section6}
In this section we will derive a Black-Scholes type model as a limit of discrete financial experiments. As application we obtain the convergence of option prices for discrete models to the related Black-Scholes option prices. 
The structure of this section is the following. The ideas are first explained for the classical Cox-Ross-Rubinstein model in Examples \ref{crr2}, \ref{crr3}, \ref{bs}. 
Under regularity conditions $\textit{(B1)}$ and $\textit{(B2)}$ a general limit theorem with It\^{o} type limit models is obtained which implies asymptotic option price formulas.\\
As a motivation we will deal with the case of independent returns of the discounted price process. 
Consider one discounted asset ($d=1$) and the associated financial experiment $\{Q_{1,N},Q_N\}$ where prices are observed at discrete times 
$$I_N=\left\lbrace m_j:=\frac{jT}N : 0 \le j \le N, j \in \mathbb{N}\right\rbrace \subset [0,T]. $$
We get the following representation
\begin{equation}\label{returns}
 \dfrac{dQ_{1,N|\mathcal{F}_{m_k,N}}}{dQ_{N|\mathcal{F}_{m_k,N}}}
=\frac{X^1_{m_k,N}}{X^1_{0,N}}
=\prod_{j=1}^k \frac{X^1_{m_j,N}}{X^1_{m_{j-1},N}}=:\prod_{j=1}^k Z_{j,N}
\end{equation}
where 
$$Z_{k,N}-1=\frac{\prod_{j=1}^k Z_{j,N}-\prod_{j=1}^{k-1} Z_{j,N}}{\prod_{j=1}^{k-1}Z_{j,N}}=\frac{X^1_{m_k,N}-X^1_{m_{k-1},N}}{X^1_{m_{k-1},N}}$$
are the returns of the asset. Observe that
$$\dfrac{dQ_{1,N|\mathcal{F}_{m_k,N}}}{dQ_{N|\mathcal{F}_{m_k,N}}}
=\dfrac{dQ_{1,N|\mathcal{F}_{m_{k-1},N}}}{dQ_{N|\mathcal{F}_{m_{k-1},N}}} \cdot Z_{k,N}$$
holds and therefore $Z_{k,N}=\dfrac{dQ_{1,N}(k)}{dQ_N(k)}$ is itself a likelihood ratio in the experiment 
$\{Q_{1,N}(k),Q_N(k)\}$ complementary to 
$\left\lbrace Q_{1,N|\mathcal{F}_{m_{k-1},N}},Q_{N|\mathcal{F}_{m_{k-1},N}}\right\rbrace $ with respect to 
$\left\lbrace Q_{1,N|\mathcal{F}_{m_k,N}},Q_{N|\mathcal{F}_{m_k,N}}\right\rbrace $.
With increasing $N$ the time grid induced by $I_N$ becomes finer and, in consequence, the returns $Z_{k,N}-1$ will be small in distribution for large $N$. 
At this stage, LeCam's theory offers the concept of $L_1$- and $L_2$-differentiability which now turns out to be a concept for the local perturbation of returns in the financial markets. 
As mentioned before we will assume here that the returns of the assets are independent under $Q_N$. 
Taking Remark \ref{aequi}(b) into account we may assume without restrictions that the financial experiment is the product experiment of complementary experiments
$$Q_{1,N}=\bigotimes_{j=1}^N Q_{1,N}(j), \quad Q_N=\bigotimes_{j=1}^N Q_N(j) \quad \textnormal{as well as }~~
\dfrac{dQ_{1,N}}{dQ_N}=\prod_{j=1}^N Z_{j,N}.$$
The independence assumption proves that all other forms of the underlying financial experiment are equivalent to that product experiment.
To explain the methodology we will investigate a one step model for the first period above. The ideas will then be used to treat time periods in the $N$ step model. To this end, consider a risky asset with starting price $S^1_0$ and price after the first step $S^1_1$ as well as a bond $S^0_0=1$, $S^0_1=1+\rho$. This gives $X^1_1=\frac{S^1_1}{1+\rho}$ and $X^1_0=S^1_0$ as prices for the discounted asset.
We now turn our attention to the returns of the asset. Assume that there exists a probability measure $P_0$ with 
$\int \frac{S^1_1}{S^1_0} dP_0=1$ such that $\frac{S^1_1}{S^1_0}$ is $P_0$ square integrable, which, at least for a small time period, is a reasonable assumption.
Our approach is to decompose the returns in terms of a function $g$ and some $\sigma \ge 0$ by
\begin{equation}\label{tangente}
\frac{S^1_1}{S^1_0} -1 = \sigma g \quad \textnormal{with } \int g dP_0 = 0, \int g^2 dP_0 = 1.
\end{equation}
Here, $\sigma$ can be viewed as a volatility parameter. As the time grid becomes finer in the $N$ step model, $\sigma$ may decrease whereas the shape function $g$ remains the same in our model.

\begin{example}[Example \ref{crr} Cox-Ross-Rubinstein continued]\label{crr2}
 For $\omega \in \{0,1\}$ the one step Cox-Ross-Rubinstein model is given by $S_1^1(\omega)=S_0^1 u^{\omega} d^{1-\omega}$ and $S_1^0=1+\rho$. If we choose positive parameters $a,b$ satisfying $u=1+\frac {\sigma a}{\sqrt{ab}}$ and 
$d=1-\frac {\sigma b}{\sqrt{ab}}$ with $\sigma>0$ small enough, we arrive at the Bernoulli distribution $P_0=B(1,\frac b{a+b})$ and 
$g:\{0,1\} \rightarrow \mathbb{R}$, $g(1)=\frac a{\sqrt{ab}}$, $g(0)=\frac {-b}{\sqrt{ab}}$, which gives us the decomposition \eqref{tangente}.
\end{example}
In general we start below with a given shape function $g$ of the form \eqref{tangente} and $\sigma>0$.
Under regularity conditions, see Lemma \ref{einperiode}, $1+\vartheta g$ is non negative for small $\vartheta \ge 0$. We may introduce a path of probability measures $\vartheta \mapsto P_{\vartheta}$ by
\begin{equation}\label{pfad1}
 \dfrac{dP_{\vartheta}}{dP_0}:=1+\vartheta g \quad \textnormal{for small }\vartheta \ge 0.
\end{equation}
Thus, in terms of statistical experiments, $g$ can be regarded as a tangent attached at $P_0$.\\
We remark that, when the interest rate $\rho$ is zero, $P_0$ is a martingale measure for the special one step model \eqref{tangente}.
The general approach \eqref{pfad1} leads to the following lemma.

\begin{lemma}\label{einperiode}
 Suppose that $\rho >0$ holds and that the essential infimum $\textnormal{essinf}_{P_0}(g)> -\frac{\sigma}{\rho}$ of $g$ with respect to $P_0$ is bounded from below. 
Within the family \eqref{pfad1} the choice $\vartheta=\frac{\rho}{\sigma}$ is the unique parameter such that $Q:=P_{\frac {\rho}{\sigma}}$ is a martingale measure for the one step discounted price model, i.e.
\begin{equation}
\frac{X_1^1}{X_0^1}=\frac{1+\sigma g}{1+\rho}=\dfrac{dQ_1}{dQ}
\end{equation}
where $Q_1$ is defined by that equation.
\end{lemma}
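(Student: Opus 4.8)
The plan is to reduce the martingale condition for the one-step discounted price to a single scalar equation and then solve it explicitly inside the family \eqref{pfad1}. Since $\mathcal{F}_0=\sigma(\mathcal{N})$ is trivial, a probability measure $P_{\vartheta}$ from \eqref{pfad1} is a martingale measure for $(X_0^1,X_1^1)$ if and only if $E_{P_{\vartheta}}(X_1^1)=X_0^1$, that is, using $X_1^1=S_1^1/(1+\rho)$, $X_0^1=S_0^1$ and \eqref{tangente}, if and only if $E_{P_{\vartheta}}(1+\sigma g)=1+\rho$, i.e.\ $\sigma\,E_{P_{\vartheta}}(g)=\rho$. The key computation is $E_{P_{\vartheta}}(g)=\int g\,(1+\vartheta g)\,dP_0=\int g\,dP_0+\vartheta\int g^2\,dP_0=\vartheta$, using the normalisations $\int g\,dP_0=0$ and $\int g^2\,dP_0=1$ from \eqref{tangente}. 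Hence the martingale condition becomes $\sigma\vartheta=\rho$, which has the unique solution $\vartheta=\rho/\sigma$ within the family; this already yields the uniqueness claim.

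Next I would check that $\vartheta=\rho/\sigma$ is admissible, i.e.\ that $P_{\rho/\sigma}$ is genuinely a probability measure equivalent to $P_0$. Since $\vartheta=\rho/\sigma>0$ and $\textnormal{essinf}_{P_0}(g)>-\sigma/\rho$ by hypothesis, we have $\vartheta g\ge\vartheta\,\textnormal{essinf}_{P_0}(g)>(\rho/\sigma)(-\sigma/\rho)=-1$ $P_0$-a.s., so $dP_{\rho/\sigma}/dP_0=1+\vartheta g>0$ $P_0$-a.s.; together with $\int(1+\vartheta g)\,dP_0=1$ this shows $Q:=P_{\rho/\sigma}$ is a probability measure equivalent to $P_0$. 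By the first paragraph $Q$ is a martingale measure, so the discounted price satisfies $X_1^1/X_0^1=(1+\sigma g)/(1+\rho)$ with $E_Q\big((1+\sigma g)/(1+\rho)\big)=1$; applying Theorem \ref{darstellung} (with $d=1$, $I=\{0,1\}$) then produces the measure $Q_1\ll Q$ defined by $dQ_1/dQ=(1+\sigma g)/(1+\rho)=X_1^1/X_0^1$, which is the displayed identity.

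I do not expect a genuine obstacle here: the statement is essentially a one-line computation once the martingale property is written out, and the only point requiring care is the admissibility of the parameter $\vartheta=\rho/\sigma$, which is exactly what the essinf hypothesis is designed to guarantee (and which also explains why $\rho$ must be small relative to $\sigma$, since $\int g\,dP_0=0$ forces $\textnormal{essinf}_{P_0}(g)<0$). One could add the remark that for $\rho=0$ the same computation gives $\vartheta=0$, recovering the observation made before the lemma that $P_0$ itself is the martingale measure in that case.
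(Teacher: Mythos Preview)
Your proof is correct and follows essentially the same route as the paper: both verify the martingale condition by expanding $\int(1+\sigma g)\,dP_\vartheta=\int(1+\sigma g)(1+\vartheta g)\,dP_0$ using the normalisations $\int g\,dP_0=0$, $\int g^2\,dP_0=1$, and both use the essinf hypothesis to ensure $1+(\rho/\sigma)g>0$. Your version is in fact slightly more complete, since by computing $E_{P_\vartheta}(g)=\vartheta$ for general $\vartheta$ you obtain the uniqueness claim directly, whereas the paper's proof only checks that $\vartheta=\rho/\sigma$ works and leaves uniqueness implicit.
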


\begin{proof}
 We first observe that $1+\frac{\rho}{\sigma} g$ is a positive $P_0$ density.
 Using \eqref{tangente} and \eqref{pfad1} we see that 
$$\int (1+ \sigma g) dP_{\frac {\rho}{\sigma}}
=\int (1+\sigma g) dP_0 + \int \left( {\frac {\rho}{\sigma}}g + \rho g^2\right)  dP_0 = 1 + \rho$$
and hence $$\int \frac{X_1^1}{X_0^1} dP_{\frac {\rho}{\sigma}}= \int \frac 1{1+\rho} \frac{S_1^1}{S_0^1}dP_{\frac {\rho}{\sigma}}=\int \frac{1+ \sigma g}{1+\rho} dP_{\frac {\rho}{\sigma}} =1,$$ which implies the result.
\end{proof}

The distribution $P_0$ above can be interpreted as a possible real world measure.

\begin{example}[Example \ref{crr2} Cox-Ross-Rubinstein continued]\label{crr3}
${}$\\In the
Cox-Ross-Rubinstein model the probability measures $P_{\sigma}$ and $P_{\frac{\rho}{\sigma}}$ take the simple form of Bernoulli distributions. 
From \eqref{pfad1} we get $\frac{dP_{\sigma}}{dP_0}(\omega)=u^{\omega} d^{1-\omega}$. Since $P_0=B(1,\frac b{a+b})$, this leads to
 $P_{\sigma}=(1-p_{\sigma})\varepsilon_0+p_{\sigma} \varepsilon_1$ with $p_{\sigma}=u \frac{b}{a+b}=\frac{b+\sigma \sqrt{ab}}{a+b}$.
Consequently, the martingale measure $P_{\frac{\rho}{\sigma}}$ is given by $P_{\frac{\rho}{\sigma}}=(1-\tau)\varepsilon_0+\tau \varepsilon_1$ with $\tau=\frac{\sigma b+\rho \sqrt{ab}}{\sigma(a+b)}$, which is precisely the measure $Q$ in Example \ref{crr} obtained by calculating $\tau=\frac{1-\tilde{d}}{\tilde{u}-\tilde{d}}$.
\end{example}

In a slightly more general context than \eqref{tangente} with heterogeneous volatilities we will establish an approximation of It\^{o} type continuous time financial models by discrete time price models with independent returns. 
To this end we use the famous central limit theorem for statistical experiments given by the local asymptotic normality (LAN) of LeCam and H\'{a}jek. 
For this reason we will now consider a general path \eqref{pfad1} $\vartheta \mapsto P_{\vartheta}$ of distributions on $(\Omega,\mathcal{A})$ (extending our motivation above), which turns out to be $L_2$-differentiable with tangent (score function) 
$g$ at $\vartheta=0$ as $\vartheta \downarrow 0$ under the regularity assumptions $\textit{(B2)}$ below, see Strasser \cite{Strasser:85a} for more information and the proof of Theorem \ref{lan}. \\
Roughly speaking, the idea of LeCam's $L_2$-differentiability is as follows. The path $\vartheta \mapsto P_{\vartheta}$ is embedded in the Hilbert space $L_2(P_0)$ by the root of the likelihood ratio
$$\vartheta \mapsto \left( \dfrac{dP_{\vartheta}}{dP_0}\right)^{1/2} \in L_2(P_0)$$
and $g \in L_2(P_0)$ is its Hilbert space derivative of $2\left( \dfrac{dP_{\vartheta}}{dP_0}\right)^{1/2}$ at $\vartheta=0$ with respect to $L_2(P_0)$-norm. 
Recall that under $L_2$-differentiability the central limit theorem for statistical experiments is well known from LeCam's theory. 
Therefore, we obtain a Gaussian experiment for the limit experiment, which is a filtered financial experiment in our framework. Motivated by Lemma \ref{einperiode} we will now introduce the asymptotics of discrete time models. 
This is a generalization of related results of F\"ollmer and Schied \cite{Foellmer and Schied:04}, p. 246.\\
We require the following assumptions, see \eqref{bedingung1}-\eqref{bedingung3}.
We will restrict ourselves to deterministic interest and volatility parameters $\rho_{i,N}$, $\sigma_{i,N}$. An extension to a more general setting is work under progress and would overload this paper.
\smallskip

$\textit{(B1)}$ Suppose that at stage $N$ the bond is given by $S^0_{0,N}=1$ and 
$$S^0_{\frac{kT}N,N}=\prod_{i=1}^k \left( 1+\frac{\rho_{i,N}T}N\right) $$
with interest rates $\frac{\rho_{i,N}T}N \ge 0$ on $\left( \frac{(i-1)T}N,\frac{iT}N\right]$ satisfying the boundary condition\\ $\textnormal{max}\{\rho_{i,N}: i \le N, N \in \mathbb{N}\} \leq R < \infty$.\\
Assume further that $r:[0,T] \rightarrow [0,\infty)$ is a square integrable function such that
\begin{equation}\label{bedingung1}
\prod_{i=1}^{[\frac{tN}T]}\left(1+\frac{\rho_{i,N}T}N \right) \longrightarrow \exp\left( \int_0^t r(u)du\right) \quad \textnormal{as } N \rightarrow \infty 
\end{equation}
holds for each $t \in [0,T]$.
\smallskip

$\textit{(B2)}$ Suppose that $g \in L_2(P_0)$ is a tangent with $\int g dP_0=0$ and $\int g^2 dP_0=1$. 
We require that $g >-C$ holds for some positive constant $C$ and we consider the $L_2$-differentiable path $(P_{\vartheta})_{0 \le \vartheta \le C^{-1}}$ given by \eqref{pfad1} with tangent $g$ at $\vartheta=0$. 
Within the $i$-th period let $\frac{\sigma_{i,N}\sqrt T}{\sqrt N}$ with $0 < \delta \le \sigma_{i,N} \le K$ denote the volatility and let $S_{0,N}^1=1$. The increments of the returns of the asset are now taken from the path $(P_{\vartheta})_{0 \le \vartheta \le C^{-1}}$, i.e. 
at stage $j \le N$ we take $P_N(j):=P_{\frac{\sigma_{j,N}\sqrt T}{\sqrt N}}$. The consideration in \eqref{returns} motivates the model
\begin{equation}\label{preis2}
S^1_{\frac{kT}N,N}(x_1,...,x_k)=\dfrac{d\otimes_{i=1}^k P_N(i)}{dP_0^k}(x_1,...,x_k)=\prod_{i=1}^k \dfrac{dP_N(i)}{dP_0}(x_i).
\end{equation}
Introduce a step function $\sigma_N:[0,T] \rightarrow \mathbb{R}$ by setting 
 $$\sigma_N(u):=\sum_{k=1}^N \sigma_{k,N} \ind_{\left(\frac{(k-1)T}N,\frac{kT}N\right]}(u). $$
Assume that there exists a square integrable function $\sigma:[0,T] \rightarrow \mathbb{R}$ such that
\begin{equation}\label{bedingung3}
 \int_0^T (\sigma_N(u)-\sigma(u))^2 du \longrightarrow 0 \quad \textnormal{as } N \rightarrow \infty.
\end{equation}
Introduce also $\theta_{k,N}\ =\frac{\rho_{k,N}}{\sigma_{k,N}}$.
$${}$$
Under assumptions $\textit{(B1)}$ and $\textit{(B2)}$, the discounted price process can be written as
$$X^1_{\frac{kT}N,N}(x_1,...,x_k)=\prod_{i=1}^k \left( \dfrac{dP_N(i)}{dP_0}(x_i)\Big/ \left( 1+\frac{\rho_{i,N}T}N\right) \right) .$$
Whenever $N$ is large enough, Lemma \ref{einperiode} establishes a martingale measure $Q_N(j)$ for each time period $j$. 
More precisely, within the $j$-th period with parameters $\frac{\sigma_{j,N}\sqrt T}{\sqrt{N}}$ for volatility and $\frac{\rho_{j,N}T}N$ for interest rate, the martingale measure is given by 
$Q_N(j)=P_{\frac{\theta_{j,N}\sqrt T}{\sqrt{N}}}$ since 
$\frac{\rho_{j,N}T}N \left(\frac{\sigma_{j,N}\sqrt T}{\sqrt{N}}\right)^{-1}=\frac{\theta_{j,N}\sqrt T}{\sqrt{N}}$. 
The associated measure $Q_{1,N}(j)$ is defined by
\begin{equation}\label{mm}
  \dfrac{dQ_{1,N}(j)}{dQ_N(j)}(x_j)=\dfrac{dP_N(j)}{dP_0}(x_j)\Big/ \left( 1+\frac{\rho_{j,N}T}N\right).
\end{equation}
Note that, by setting 
$Q_{1,N}=\bigotimes_{j=1}^N Q_{1,N}(j)$ and $Q_N=\bigotimes_{j=1}^N Q_N(j)$, we recover the measures obtained from complementary experiments mentioned in the beginning of section \ref{section6}.
Therefore, $(Q_{1,N}(j),Q_N(j))$ are the independent increments of a filtered financial experiment at stage $N$, i.e.
$$X^1_{\frac{kT}N,N}(x_1,...,x_k)=\dfrac{d\bigotimes_{j=1}^k Q_{1,N}(j)}{d\bigotimes_{j=1}^k Q_N(j)}(x_1,...,x_k).$$

\begin{theorem}\label{lan}
 Assume that $\textit{(B1)}$ and $\textit{(B2)}$ are satisfied and fix $I=[0,T]$. Consider a Black-Scholes type model with volatility $\sigma(.)$, interest rate $r(.)$ and bond price process $S_0(t)=\exp(\int_0^t r(s) ds)$ given by $(Q_{1,t},Q_t)_{t \in I}$ with
\begin{eqnarray*}
\frac{dQ_{1,t}}{dQ_t}&=&\exp\left(\int_0^t \sigma(s) d\bar{W}_s - \int_0^t \frac{\sigma(s)^2}2 ds\right) \\
&=& \exp\left(\int_0^t \sigma(s) dW_s - \int_0^t \left(r(s)+\frac{\sigma(s)^2}2\right)ds\right)
\end{eqnarray*}
where $\bar{W}$ and $\theta(.)=\frac{r(.)}{\sigma(.)}$ are defined as in Example \ref{ito} for dimension $d=1$.
Then, the finite dimensional marginal distributions of the discounted price process 
$\left( X^1_{[\frac t T N]\frac T N,N}\right)_{t \in I} $ converge weakly to the finite dimensional marginals of 
$\mathcal{L}\left( \left( \dfrac{dQ_{1,t}}{dQ_t}\right)_{t \in I}  \Big| Q_T \right)$ under the martingale measure
$Q_N=\bigotimes_{j=1}^N Q_N$ and to $\mathcal{L}\left( \left( \dfrac{dQ_{1,t}}{dQ_t}\right)_{t \in I}  \Big| Q_{1,T} \right)$ under 
the measure $Q_{1,N}=\bigotimes_{j=1}^N Q_{1,N}(j)$, respectively.
\end{theorem}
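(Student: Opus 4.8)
The plan is to reduce the assertion to a triangular-array central limit theorem for the independent log-returns and then to pass from the martingale measure $Q_N$ to the measure $Q_{1,N}$ by contiguity and LeCam's third lemma, exactly along the lines of Theorem \ref{konvergenz1}. Fix finitely many times $0\le t_1<\ldots<t_m\le T$. By \eqref{preis2}--\eqref{mm}, under $Q_N=\bigotimes_{j=1}^N Q_N(j)$ the $j$th coordinate $x_j$ has law $Q_N(j)=P_{\theta_{j,N}\sqrt T/\sqrt N}$ and, with $k=[\tfrac tT N]$,
$$\log X^1_{\frac{kT}{N},N} = \sum_{j=1}^{k}\Big(\log\big(1+\tfrac{\sigma_{j,N}\sqrt T}{\sqrt N}g(x_j)\big) - \log\big(1+\tfrac{\rho_{j,N}T}{N}\big)\Big).$$
Thus $\big(\log X^1_{[\frac{t_l}{T}N]\frac{T}{N},N}\big)_{l\le m}$ is a sum of row-independent summands whose increments over the blocks cut out by $t_1,\ldots,t_m$ are independent under $Q_N$; since $X^1>0$, it suffices to prove weak convergence of the log-process, the claim for $X^1$ then following by the continuous mapping theorem.

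By $\textit{(B2)}$ the path $\vartheta\mapsto P_\vartheta$ of \eqref{pfad1} is $L_2$-differentiable at $\vartheta=0$ with tangent $g$, $\int g\,dP_0=0$, $\int g^2\,dP_0=1$, and by $\textit{(B1)}$--$\textit{(B2)}$ the perturbations $\theta_{j,N}\sqrt T/\sqrt N$ and $\sigma_{j,N}\sqrt T/\sqrt N$ are uniformly $O(N^{-1/2})$ with $\sum_j(\sigma_{j,N}^2+\theta_{j,N}^2)\tfrac TN$ bounded. This is precisely the local asymptotic normality setting of LeCam (see Strasser \cite{Strasser:85a}): each block-sum above admits, under $Q_N$, an asymptotic expansion whose deterministic part is governed by $\sum_j\tfrac{\rho_{j,N}T}{N}$ and $\tfrac12\sum_j\tfrac{\sigma_{j,N}^2T}{N}$ — which by \eqref{bedingung1} and \eqref{bedingung3} tend to $\int_0^t r$ and $\tfrac12\int_0^t\sigma^2$ — while its random part is a sum of independent, uniformly asymptotically negligible terms whose conditional variances tend to $\int_0^t\sigma^2$; here the $L_2$-differentiability is exactly what supplies the Lindeberg condition without moment hypotheses on $g$ beyond square integrability. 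Tracking mean and covariance, one finds that under $Q_N$ the vector $\big(\log X^1_{[\frac{t_l}{T}N]\frac{T}{N},N}\big)_l$ converges weakly to the Gaussian vector $\big(\int_0^{t_l}\sigma(s)\,d\bar W_s - \tfrac12\int_0^{t_l}\sigma(s)^2\,ds\big)_l$ with independent increments, i.e. to the finite-dimensional marginals of $\mathcal{L}\big((\log\tfrac{dQ_{1,t}}{dQ_t})_{t\in I}\,\big|\,Q_T\big)$.

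For the statement under $Q_{1,N}$ I would adjoin the coordinate $\log X^1_{T,N}=\log\frac{dQ_{1,N}}{dQ_N}$ to this vector. From the boundedness of the total perturbation the product experiments are mutually contiguous, $Q_{1,N}\triangleleft\hspace{-.1cm}\triangleright\hspace{.1cm}Q_N$ (the Gaussian $Q_N$-limit of $\log\frac{dQ_{1,N}}{dQ_N}$ having the log-likelihood normalization $E[\exp(\cdot)]=1$), so LeCam's third lemma turns the joint $Q_N$-limit into the $Q_{1,N}$-limit: the limiting law is obtained from the $Q_N$-limit by the density $x^1_T$, which is exactly the Girsanov reweighting that carries $\mathcal{L}\big((\tfrac{dQ_{1,t}}{dQ_t})_{t\in I}\,\big|\,Q_T\big)$ to $\mathcal{L}\big((\tfrac{dQ_{1,t}}{dQ_t})_{t\in I}\,\big|\,Q_{1,T}\big)$ (cf. Example \ref{ito}); this is the mechanism already exploited in Theorem \ref{konvergenz1}. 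The continuous mapping theorem then transfers everything from $\log X^1$ back to $X^1$. \emph{The main obstacle} is making the LAN expansion and the triangular-array central limit theorem rigorous under the bare $L_2$-hypothesis on $g$: controlling the remainder of $\log(1+\tfrac{\sigma_{j,N}\sqrt T}{\sqrt N}g)$, verifying uniform negligibility and convergence of the conditional variances without assuming $g\in L_3(P_0)$, and confirming the mutual contiguity $Q_{1,N}\triangleleft\hspace{-.1cm}\triangleright\hspace{.1cm}Q_N$ — points that are the substance of LeCam's and Strasser's LAN theorems, which I would quote rather than reprove.
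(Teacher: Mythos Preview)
Your overall architecture (LAN expansion plus LeCam's third lemma, then continuous mapping) is the same as the paper's, but the route differs in one important respect, and that difference is exactly the ``main obstacle'' you flag at the end.

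You try to run the LAN expansion \emph{directly under} $Q_N=\bigotimes_j P_{\theta_{j,N}\sqrt T/\sqrt N}$. The standard LAN machinery (LeCam's second lemma, Strasser \cite{Strasser:85a}, Theorem~79.2) is, however, formulated under the fixed null measure $P_0^N$, not under the moving local alternative $Q_N$; this is why you are left having to control the remainder of $\log(1+\tfrac{\sigma_{j,N}\sqrt T}{\sqrt N}g)$ and verify Lindeberg ``without assuming $g\in L_3(P_0)$'' under $Q_N$, which you cannot do by mere citation. The paper avoids this by using $P_0^N$ as a pivot: it first obtains the expansion
\[
\log S^1_{\tilde t,N}=Z_{t,N}-\tfrac12\tfrac TN\sum_{i\le n(t)}\sigma_{i,N}^2+R_{t,N},\qquad Z_{t,N}=\sum_{i\le n(t)}\tfrac{\sigma_{i,N}\sqrt T}{\sqrt N}\,g(x_i),
\]
with $\mathcal L(Z_{t,N}\mid P_0^N)\to N\big(0,\int_0^t\sigma^2\big)$ and $R_{t,N}\to 0$ in $P_0^N$-probability, \emph{under $P_0^N$}; this is exactly what $L_2$-differentiability at $\vartheta=0$ together with the Noether condition delivers without any third-moment hypothesis. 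It then runs a \emph{second} LAN expansion for $\log\dfrac{d\,\otimes_i Q_N(i)}{dP_0^{n(t)}}$ with the parameters $\theta_{i,N}=\rho_{i,N}/\sigma_{i,N}$, obtains the joint bivariate normal limit of $(Z_{t,N},\tilde Z_{t,N})$ under $P_0^N$, and only \emph{then} applies LeCam's third lemma --- but in the direction $P_0^N\to Q_N$, not $Q_N\to Q_{1,N}$. This yields $\mathcal L(Z_{t,N}\mid Q_N)\to N\big(\int_0^t r,\int_0^t\sigma^2\big)$, which together with \textit{(B1)} gives the claimed limit for $\log X^1$ under $Q_N$; Cram\'er--Wold handles the finite-dimensional joints. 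The final passage from $Q_N$ to $Q_{1,N}$ is then dismissed in one line (mutual absolute continuity of the Gaussian limit plus contiguity), which matches your last step.

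In short: your argument is missing the $P_0^N$ pivot. Insert it, run LAN twice under $P_0^N$ (once for the $\sigma$-direction, once for the $\theta$-direction), get joint asymptotic normality there, and apply LeCam's third lemma $P_0^N\to Q_N$ to obtain the mean shift $\int_0^t r$. What you identified as the main obstacle then dissolves, because the remainder control and the Lindeberg step are carried by the off-the-shelf LAN theorem at $\vartheta=0$.
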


\begin{remark}
 Theorem \ref{lan} introduces a central limit theorem for financial experiments and price processes. 
Since the tangent $g$ is normalized note that neither the special distributions \eqref{pfad1} of the returns nor the shape of $g$ contribute to the limit. 
Observe that only the volatility and interest rate determine the parameters of the It\^{o} type model here.
\end{remark}

\hspace{-.63cm}
\textbf{Proof of Theorem \ref{lan}:}
 The right-sided $L_2$-differentiability of the path \eqref{pfad1} with derivative $g$ at $\vartheta=0$ follows from routine arguments using H\'{a}jek's (1972) criterion, see Strasser \cite{Strasser:85a}, Theorem 77.3, p. 391 or Torgersen \cite{Torgersen:91}, p. 537. Note that, in the setup of \eqref{pfad1}, the Fisher information is given by
$$\vartheta \mapsto \int \frac{g^2}{1+\vartheta g} dP_0$$ and that it is continuous as $\vartheta \downarrow 0$ by the dominated convergence theorem.\\
In a first step we establish local asymptotic normality (LAN) for the asset price process. 
For fixed $t \in I$ we introduce $n(t):=[\frac t T N]$ and $\tilde{t}:=[\frac t T N]\frac T N$ and we apply LeCam's technique to obtain the stochastic expansion
\begin{equation}\label{lan2}
 \log S^1_{\tilde{t},N}= \log \dfrac{d\otimes_{i=1}^{n(t)} P_N(i)}{dP_0^{n(t)}}=Z_{t,N}-\frac 1 2 \frac T N \sum_{i=1}^{n(t)} \sigma_{i,N}^2 +R_{t,N}
\end{equation}
with $Z_{t,N}(x_1,...,x_{n(t)}):=\sum_{i=1}^{n(t)} \frac{\sigma_{i,N}\sqrt T}{\sqrt N} g(x_i)$ and remainder term $R_{t,N}$. Note that assumption $\textit{(B2)}$ implies the Noether condition (see Strasser \cite{Strasser:85a}, Conditions 79.1, p. 402)
\begin{equation}\label{noether}
 \max_{i \le n(t)} \Big|\frac{\sigma_{i,N}\sqrt T}{\sqrt N}\Big| \rightarrow 0 \quad \text{ and } \quad \frac T N \sum_{i=1}^{n(t)} \sigma_{i,N}^2 \rightarrow \int_{0}^{t} \sigma(s)^2 ds.
\end{equation}
This fact together with the $L_2$-differentiability implies LAN for the stochastic expansion \eqref{lan2}, i.e.
$\mathcal{L}(Z_{t,N}|P_0^N) \rightarrow N\left( 0, \int_0^t \sigma(s)^2 ds\right) $ weakly and $R_{t,N} \rightarrow 0$ in $P_0^N$-probability as $N \rightarrow \infty$. This is a consequence of LeCam's second lemma, see Strasser \cite{Strasser:85a}, Theorem 79.2, p. 402 or Witting and M\"uller-Funk \cite{Witting and MuellerFunk:95}, p. 317, Satz 6.130. Hence,
\begin{eqnarray*}
\mathcal{L}(\log S^1_{\tilde{t},N}|P_0^N) \rightarrow &N&\left( - \frac 1 2\int_0^t \sigma(s)^2 ds, \int_0^t \sigma(s)^2 ds\right)\\
&=& \mathcal{L}\left(\int_0^t \sigma(s) dW_s - \int_0^t \frac{\sigma(s)^2}2 ds\right). 
\end{eqnarray*}
Combining this result with assumption $\textit{(B1)}$ now gives us convergence for the discounted prices process. More precisely, we get
\begin{eqnarray*}
&\mathcal{L}& \left( \log \left( \dfrac{d\otimes_{i=1}^{n(t)} P_N(i)}{dP_0^{n(t)}} \Big/ \prod_{i=1}^{n(t)}\left(1+\frac{\rho_{i,N}T}N \right)\right)\Bigg|P_0^N\right) \\
&\rightarrow& \mathcal{L}\left(\int_0^t \sigma(s) dW_s - \int_0^t \left(r(s)+\frac{\sigma(s)^2}2\right)ds \right). 
\end{eqnarray*}
In a second step we aim to achieve convergence of the asset prices process and the discounted price process under the martingale measure $Q_N=\bigotimes_{j=1}^N Q_N(j)$. Introduce
$$\alpha_{t,N}=\sum_{k=1}^{n(t)}\left(\frac{\rho_{k,N}}{\sigma_{k,N}} \sqrt{\frac T N} \right)^2.$$
Assumptions $\textit{(B1)}$ and $\textit{(B2)}$ imply 
$\limsup_{N \rightarrow \infty} \alpha_{t,N} \le \frac{R^2}{\delta^2}T$ for each $t$. 
Without restrictions we may assume that $\alpha_{t,N}$ converges to a limit $\alpha_t$ as $N \rightarrow \infty$. Otherwise, we could turn to convergent subsequences and it turns out below that we obtain the same limit for every convergent subsequence chosen which leads to the desired result. 
As above, we get LAN by replacing $\sigma_{k,N}$ with $\frac{\rho_{k,N}}{\sigma_{k,N}}$, i.e.
$$ \log \dfrac{d\otimes_{i=1}^{n(t)} P_N(i)}{dP_0^{n(t)}}=\tilde{Z}_{t,N}-\frac 1 2 \frac T N \sum_{i=1}^{n(t)} \theta_{i,N}^2 +\tilde{R}_{t,N}$$ with $\tilde{Z}_{t,N}(x_1,...,x_{n(t)}):=\sum_{i=1}^{n(t)} \frac{\theta_{i,N}\sqrt T}{\sqrt N} g(x_i)$ where 
$\mathcal{L}(\tilde{Z}_{t,N}|P_0^N) \rightarrow N( 0,\alpha_t) $ \\
weakly and $\tilde{R}_{t,N} \rightarrow 0$ in $P_0^N$-probability as $N \rightarrow \infty$. 
Therefore, $\mathcal{L}((Z_{t,N},\tilde{Z}_{t,N})|P_0^N)$ is asymptotically bivariate normal distributed with asymptotic covariance
$$\lim_{N \rightarrow \infty} \frac T N \sum_{i=1}^{n(t)} \sigma_{i,N} \theta_{i,N} 
=\lim_{N \rightarrow \infty} \frac T N \sum_{i=1}^{n(t)} \rho_{i,N} = \int_0^t r(s)ds.$$
LeCam's third lemma now implies
 $$\mathcal{L}\left( Z_{t,N}|Q_N\right)  \rightarrow N\left( \int_0^t r(s)ds, \int_0^t \sigma(s)^2 ds\right) $$
weakly as $N \rightarrow \infty$. Thus, \eqref{lan2} yields
$$\mathcal{L}\left( \log S^1_{\tilde{t},N} |Q_N\right)\rightarrow \mathcal{L}\left(\int_0^t \sigma(s) dW_s + \int_0^t \left(r(s)-\frac{\sigma(s)^2}2\right)ds \right)$$
weakly as $N \rightarrow \infty$. Consequently, for the discounted price processes we obtain
$$\mathcal{L}\left( \log X^1_{\tilde{t},N} |Q_N\right)\rightarrow \mathcal{L}\left( \log \dfrac{dQ_{1,t}}{dQ_t} \Big| Q_T \right).$$
In a last step, the application of the Cram\'{e}r-Wold device shows convergence of the processes for a finite number of times $t_1,...,t_m \in I$. This follows by taking advantage of the linear structure of the variables $Z_{t_j,N}$.\\
This establishes the convergence under $Q_N$. Since the limit experiment consists of mutually equivalent distributions the convergence under $Q_{1,N}$ to the desired limit is immediate from LeCam's theory.
\hspace*{\fill}\begin{small}$\square$\end{small}

\begin{remark}
 Theorem \ref{lan} can be extended to the following more general situation. Let $\vartheta \mapsto P_{\vartheta}$ be a $L_2$-differentiable path with tangent $g$ which is not necessarily bounded from below. It can then be shown that there exists a sequence $Q_N$ of martingale measures for the price process \eqref{preis2} such that \eqref{mm} holds. For this sequence of martingale measures Theorem \ref{lan} carries over. Details are omitted here. In this situation the martingale measures $Q_N$ no longer have to be members of the path $\vartheta \mapsto P_{\vartheta}$ and consequently, the nice interpretation given by lemma \ref{einperiode} is lost.
\end{remark}

\begin{example}\label{bs}
 Consider a homogeneous model with constant volatility $\sigma_{N,k}=\sigma$ and interest rate 
$\rho_{k,N}=\frac N T \left( \exp\left( \frac{rT}N\right) -1 \right) $ over time. In this case, the limit model is the classical geometric Brownian motion where $\sigma(u)=\sigma$ and $r(u)=r$ are constants. Thus, the discrete time financial model converges to the familiar Black-Scholes model with constant volatility. We present two cases.\\
$\textit{(a)}$ (Example \ref{crr3} Cox-Ross-Rubinstein continued)\\
Fix positive parameters $a,b$. By Example \ref{crr3} the path is given by binomial distributions 
$$P_{\vartheta}=B\left(1,\frac b{a+b}\left(1+\vartheta \frac a{\sqrt{ab}}\right)\right).$$
For normalized initial values $S_{1,N}^0=S_{0,N}^0=1$ we arrive at
$$S^1_{\frac{kT}N,N}(\omega_1,...,\omega_k)=\prod_{j=1}^k u_N^{\omega_j} d_N^{1-\omega_j}, \quad \omega_j \in \{0,1\}$$
where $u_N=1+\frac{\sigma a \sqrt T}{\sqrt{Nab}}$ and $d_N=1-\frac{\sigma b \sqrt T}{\sqrt{Nab}}$. 
As martingale measure for the discounted process
$\left(\frac{S_{\frac{kT}N,N}^1}{(1+\frac {rT}N)^k}\right)_{k \le N}$ we obtain
$$Q_N=\left(B\left(1, \frac b{a+b}\left(1+\frac{\rho}{\sigma}\frac {a \sqrt T}{\sqrt{Nab}}\right)\right)\right)^N.$$
Within this parametrization the Cox-Ross-Rubinstein model converges to the geometric Brownian motion model.\\
$\textit{(b)}$ The same holds true for a wider class of models. As example we will consider the non-complete model given by trinomial distributions. Fix positive values $a,b,c$ with $a+b+c=1$ and denote with $P_0$ the trinomial distribution with elementary probabilities $a,b$ and $c$. Each tangent $g$ with $\int g dP_0=0$ and $\int g^2 dP_0=1$ now leads to a path \eqref{pfad1} of trinomial distributions $(P_{\vartheta})_{\vartheta}$ for sufficiently small $\vartheta$.
Thus, $$S^1_{\frac{kT}N,N}(\omega_1,...,\omega_k)= \dfrac{dP_{\frac{\sigma \sqrt T}{\sqrt N}}^k}{dP_0^k}(\omega_1,...,\omega_k)$$
introduces a discrete time model based on a trinomial tree. Turning to a trinomial martingale measure given by our path the model again tends to the geometric Brownian motion.

\end{example}

\begin{remark}\label{np}
$\textit{(a)}$ If we set $Q_1:=Q_{1,T}$ and $Q:=Q_T$ in Theorem \ref{lan} we get the representation $\dfrac{dQ_{1|\mathcal{F}_t}}{dQ_{|\mathcal{F}_t}}=\dfrac{dQ_{1,t}}{dQ_t}$ and as a result the prices in the limit model can be written in the form of Example \ref{ito}.\\
$\textit{(b)}$ Under the assumptions of Theorem \ref{lan} and in the setup of Example \ref{bs} with volatility 
$\sigma_{k,N}=\sigma$ and interest rate $\rho_{k,N}=\frac N T \left( \exp\left( \frac{rT}N\right) -1 \right) $ we get results for the convergence of prices for 
some options with payoffs of the form \eqref{a}. It is well known that LAN implies contiguity in both directions.
Theorem \ref{limitp} yields the existence of tests in the limit model such that the limit of discrete time option prices can be written in terms of power functions of those tests where the limit model is an It\^{o} type model by Theorem \ref{lan}. For discrete time option prices which are given by power functions of Neyman-Pearson tests, like it is the case for European call, European put, Straddle option, Strangle option and others, the limit price must be given by power functions of Neyman-Pearson tests as well, since Neyman-Pearson tests always converge to Neyman-Pearson tests, compare with Example \ref{calloption2}.
\end{remark}

The reasoning done in Remark \ref{np} $\textit{(b)}$ leads to the following Corollary.
\begin{corollary}\label{bscall}
 Consider European call options \eqref{call} $H_{C,N}=(S_{T,N}-K)^+$ in the situation of Example \ref{bs}. With $p_{Q_N}(H_{C,N})$ we denote the option price calculated as expectation with respect to the martingale measure $Q_N=\otimes_{j=1}^N Q_N(j)$. Then,
$$p_{Q_N}(H_{C,N}) \rightarrow p_Q(H_C) \quad \text{as } N \rightarrow \infty$$ 
where $p_Q(H_C)=s_0^1 \Phi(-x + \sigma \sqrt T) - \exp(-rT)K \Phi(-x)$ is the Black-Scholes price for the European call with 
$x=\frac 1 {\sigma \sqrt T} \left[ \log \frac K{s_0^1} - rT + \frac{\sigma^2}2 T\right]$.
\end{corollary}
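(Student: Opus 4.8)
The plan is to read the statement off three ingredients already in hand: Theorem~\ref{lan} (which identifies the Black--Scholes limit experiment), the pricing formula of Theorem~\ref{preis}, and the asymptotic-testing result behind Theorem~\ref{limitp}; the last step is then an explicit computation of two Gaussian tail probabilities.

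First I would check that the homogeneous model of Example~\ref{bs}, with $\sigma_{k,N}=\sigma$ and $\rho_{k,N}=\frac NT(\exp(\frac{rT}N)-1)$, satisfies assumptions $\textit{(B1)}$ and $\textit{(B2)}$. Here $1+\frac{\rho_{k,N}T}N=\exp(\frac{rT}N)$, so $\prod_{i=1}^{[\frac{tN}T]}(1+\frac{\rho_{i,N}T}N)=\exp\big(\frac{rT}N[\frac{tN}T]\big)\to\exp(rt)$, which is $\textit{(B1)}$ with $r(\cdot)\equiv r$; and $\textit{(B2)}$ holds because the tangent $g$ of the binomial (resp.\ trinomial) path is bounded from below and $\sigma_N(\cdot)\equiv\sigma$. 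Then Theorem~\ref{lan} applies, and in particular the terminal marginals converge: $\mathcal L(X^1_{T,N}\mid Q_N)$ tends weakly to the law of $\frac{dQ_{1,T}}{dQ_T}$ under $Q_T$, and $\mathcal L(X^1_{T,N}\mid Q_{1,N})$ to its law under $Q_{1,T}$, where for constant coefficients $\log\frac{dQ_{1,T}}{dQ_T}$ is $N(-\frac{\sigma^2}2T,\sigma^2T)$ under $Q_T$ and $N(\frac{\sigma^2}2T,\sigma^2T)$ under $Q_{1,T}$ (Example~\ref{ito}, Example~\ref{calloption}). Finally, by Theorem~\ref{darstellung}, $S^1_{T,N}=\exp(rT)\,s^1_0\,\frac{dQ_{1,N}}{dQ_N}$, since $S^0_{T,N}=\exp(rT)$ and $X^1_{0,N}=s^1_0$.

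Next I would put $H_{C,N}$ into the form $\textit{(A)}$: writing $H_{C,N}=(S^1_{T,N}-K)\ind_{\{S^1_{T,N}>K\}}$ we have $d=1$, $a_1=1$, strike $K$, and test $\phi^{(N)}=\ind_{\{S^1_{T,N}>K\}}=\ind_{\{dQ_{1,N}/dQ_N>c\}}$ with $c:=K\exp(-rT)/s^1_0$, a Neyman-Pearson test for $\{Q_N\}$ against $\{Q_{1,N}\}$. Theorem~\ref{preis} then yields \[p_{Q_N}(H_{C,N})=s^1_0\,E_{Q_{1,N}}\!\big(\phi^{(N)}\big)-\exp(-rT)K\,E_{Q_N}\!\big(\phi^{(N)}\big).\] Now I would pass to the limit in the two powers. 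Since the limiting laws of $X^1_{T,N}$ under $Q_N$ and under $Q_{1,N}$ are log-normal, hence atomless, and $\ind_{\{\cdot>c\}}$ is continuous off the single point $c$, the mapping theorem gives $E_{Q_N}(\phi^{(N)})\to E_{Q_T}(\phi)$ and $E_{Q_{1,N}}(\phi^{(N)})\to E_{Q_{1,T}}(\phi)$ with the limit Neyman-Pearson test $\phi=\ind_{\{dQ_{1,T}/dQ_T>c\}}$ (the same conclusion is also what Theorem~\ref{limitp}, Remark~\ref{np} $\textit{(b)}$ and Example~\ref{calloption2} give). Hence $p_{Q_N}(H_{C,N})\to s^1_0E_{Q_{1,T}}(\phi)-\exp(-rT)K\,E_{Q_T}(\phi)$, and standardizing the normal laws above I would obtain $E_{Q_T}(\phi)=\Phi\!\big(-\tfrac{\log c+\sigma^2T/2}{\sigma\sqrt T}\big)=\Phi(-x)$ and $E_{Q_{1,T}}(\phi)=\Phi\!\big(-\tfrac{\log c-\sigma^2T/2}{\sigma\sqrt T}\big)=\Phi(-x+\sigma\sqrt T)$ with $x=\frac1{\sigma\sqrt T}\big[\log\frac K{s^1_0}-rT+\frac{\sigma^2}2T\big]$, which is exactly the claimed Black--Scholes price.

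The main obstacle is the passage to the limit in the powers. By itself Theorem~\ref{limitp} only delivers convergence of option prices along a subnet or subsequence, since weak cluster points of a sequence of experiments need not be unique. What rescues uniqueness here is that Theorem~\ref{lan} already pins down the limiting It\^o model along the whole sequence, and that the limiting likelihood law is atomless, so the limit Neyman-Pearson test is unique and no randomization is needed (cf.\ Example~\ref{calloption2}); together these force convergence of the full sequence. The residual technicalities are minor: the discontinuity of $\ind_{\{\cdot>c\}}$ at the threshold is harmless against an atomless limit, and if one prefers to work with convergent rather than fixed initial values $s^1_{0,N}\to s^1_0$, the corresponding threshold shift $c_N\to c$ causes no trouble for the same reason.
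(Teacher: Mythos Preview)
Your proposal is correct and follows essentially the same route as the paper: the corollary is stated there as an immediate consequence of Remark~\ref{np}~$\textit{(b)}$, which combines Theorem~\ref{lan} (identifying the Black--Scholes limit), the pricing representation of Theorem~\ref{preis}, and the convergence of Neyman--Pearson tests to Neyman--Pearson tests (Example~\ref{calloption2}). Your write-up is in fact more detailed than the paper's sketch---you verify $\textit{(B1)}$/$\textit{(B2)}$, supply the explicit Gaussian computation, and give an alternative direct argument via the continuous mapping theorem using atomlessness of the log-normal limit---but the underlying strategy is the same.
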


\begin{remark}
$\textit{(a)}$ 
The option prices $p_{Q_N}(H_{C,N})$ at stage $N$ are not unique in general but the limit price is uniquely determined for each approximation given by Theorem \ref{lan}. For instance, in the case of Example \ref{bs} \textit{(b)}, different kinds of trinomial models following our parametrization may be chosen and all lead to the same option price. It is possible, however, that another completely different trinomial parametrization could result in a different limit experiment and consequently in different option prices.\\
$\textit{(b)}$
Sometimes an exact pricing formula for an option with payoff function $H$ is hard to compute. Our approach suggests the following approximation of various complicated option prices by discretization of the underlying model.
 \begin{itemize}
\item Find a sequence of discrete time financial experiments which converges to the underlying model as the limit model.
\item Introduce  a sequence of payoff functions $H_n$ of type \eqref{hn} where the prices can be easily computed or simulated within the discrete time model at stage $n$. Suppose that $H$ is as in Theorem \ref{limitp} and that the tests given by $H_n$ can asymptotically be identified with the related tests of $H$.
\item If the option price of $H$ is unique, given by \eqref{ph} with martingale measure $Q$, then any choice of martingale measures $Q_n$ for the discrete time models approximates the unknown price, i.e. $p_{Q_n}(H_n) \rightarrow p_Q(H)$ as $n \rightarrow \infty$.
\end{itemize}
\end{remark}

\begin{appendix}
\section{Appendix}
 Consider $\Omega=[0,\infty]^I$, which is a compact space with respect to the product topology. Let $\mathcal{B}(\Omega)$ be the Borel $\sigma$-field generated by the open subsets of $\Omega$.

\begin{lemma}\label{radon}
 Let $\mu$ be a probability measure on $(\Omega, \mathcal{B}([0,\infty])^I)$ where $\mathcal{B}([0,\infty])^I$ denotes the product $\sigma$-field. Then, there exists a unique extension of $\mu$ as Radon probability measure on $(\Omega, \mathcal{B}(\Omega))$.
\end{lemma}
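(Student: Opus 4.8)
The plan is to observe that the statement is the classical extension theorem for Baire measures on a compact Hausdorff space, applied to $\Omega=[0,\infty]^I$. For uncountable $I$ the product $\sigma$-field $\mathcal{B}([0,\infty])^I$ is strictly smaller than the Borel $\sigma$-field $\mathcal{B}(\Omega)$ of the product topology (when $I$ is countable $\Omega$ is metrizable and the two coincide, which is the easy case), so the real work is to check that $\mathcal{B}([0,\infty])^I$ is exactly the Baire $\sigma$-field $\sigma(C(\Omega))$ generated by the continuous real functions on $\Omega$; once this is known, the Riesz representation theorem does the rest.

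First I would record the topology: $[0,\infty]$ is compact and homeomorphic to $[0,1]$, so by Tychonoff $\Omega$ is compact Hausdorff, and every $f\in C(\Omega)$ is bounded. Next I would prove $\mathcal{B}([0,\infty])^I=\sigma(C(\Omega))$. Fixing a homeomorphism $h:[0,\infty]\to[0,1]\subset\mathbb{R}$, the Borel $\sigma$-field of the factor is $\mathcal{B}([0,\infty])=\sigma(h)$, hence the product $\sigma$-field is generated by the functions $h\circ\pi_i$, $i\in I$, each of which is continuous on $\Omega$; this gives $\mathcal{B}([0,\infty])^I\subseteq\sigma(C(\Omega))$. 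For the reverse inclusion I would invoke Stone--Weierstrass: the algebra generated by $\{h\circ\pi_i:i\in I\}$ and the constants separates the points of $\Omega$ and is therefore uniformly dense in $C(\Omega)$; every member of that algebra is a polynomial in finitely many coordinates and hence $\mathcal{B}([0,\infty])^I$-measurable, and uniform limits of measurable functions stay measurable, so $C(\Omega)$ consists of product-$\sigma$-field-measurable functions, i.e. $\sigma(C(\Omega))\subseteq\mathcal{B}([0,\infty])^I$.

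With this identification in hand I would finish as follows. The functional $\Lambda(f):=\int_\Omega f\,d\mu$, $f\in C(\Omega)$, is positive, linear and satisfies $\Lambda(1)=1$ (the integral makes sense because $C(\Omega)$-functions are bounded and $\mathcal{B}([0,\infty])^I$-measurable by the previous step), so by the Riesz--Markov--Kakutani theorem there is a unique Radon probability measure $\tilde{\mu}$ on $\mathcal{B}(\Omega)$ with $\int f\,d\tilde{\mu}=\Lambda(f)$ for all $f\in C(\Omega)$. That $\tilde{\mu}$ extends $\mu$ follows from a functional monotone class argument: the bounded functions $g$ with $\int g\,d\mu=\int g\,d\tilde{\mu}$ form a vector space closed under bounded monotone limits and containing $C(\Omega)$, hence containing every bounded $\sigma(C(\Omega))=\mathcal{B}([0,\infty])^I$-measurable function; in particular the two measures agree on $\mathcal{B}([0,\infty])^I$. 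Uniqueness of the Radon extension is then automatic: any Radon extension restricts to $\mu$ on $\mathcal{B}([0,\infty])^I$, hence induces the functional $\Lambda$ on $C(\Omega)$, and the uniqueness clause of Riesz--Markov--Kakutani forces it to equal $\tilde{\mu}$.

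The main obstacle is not analytic depth but making the measurability bookkeeping airtight for uncountable $I$: one must resist identifying $\mathcal{B}([0,\infty])^I$ with the full Borel $\sigma$-field (false here), use Stone--Weierstrass to pin it down as $\sigma(C(\Omega))$, and keep \emph{Radon} in the precise sense (outer regular, inner regular by compacts) so that the Riesz uniqueness clause applies. Everything else is a citation to standard measure theory (see e.g. Halmos, \emph{Measure Theory}, or Bogachev, \emph{Measure Theory}, Vol.~2).
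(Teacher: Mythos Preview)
Your argument is correct and follows essentially the same route as the paper: identify the product $\sigma$-field with the Baire $\sigma$-field $\sigma(C(\Omega))$ via Stone--Weierstrass, then invoke the Riesz representation theorem to obtain the unique Radon extension. The only cosmetic differences are that the paper first passes to $[0,1]^I$ via the homeomorphism $x\mapsto x/(1+x)$ and works there, whereas you stay on $[0,\infty]^I$ and use the homeomorphism only to manufacture continuous generators; you are also more explicit than the paper about both inclusions $\mathcal{B}([0,\infty])^I=\sigma(C(\Omega))$ and about the monotone class step verifying that the Riesz measure really extends $\mu$.
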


\begin{proof}
 In a first step a proof is given for the product of the unit interval $\Omega':=[0,1]^I$. Let $C(\Omega')$ denote the set of real valued continuous functions on $\Omega'$. Then, the product $\sigma$-field coincides with the Baire $\sigma$-field on $\Omega'$, i.e. $\mathcal{B}([0,1])^I=\sigma(C(\Omega'))$. \\
The proof of this assertion relies on the theorem of Stone-Weierstrass (see Rudin \cite{Rudin:91}, p.122). Let $\mathcal{A} \subset C(\Omega')$ denote the algebra generated by the constant functions and the univariate projections from $\Omega'$ in $[0,1]$. Then, $\mathcal{A}$ is dense in $C(\Omega')$ with respect to the topology of uniform convergence. \\
Now $C(\Omega') \ni f \mapsto \int f d\mu$ is a continuous linear form on $C(\Omega')$. The representation theorem of Riesz yields the unique extension of $\mu$ to a Radon measure.\\
Consider the topological isomorphism $\phi:[0,\infty] \rightarrow [0,1]$ given by $\phi(x)=\frac x {1+x}$ for $x < \infty$ and $\phi(\infty)=1$. Using $\phi$ and its inverse it is easy to see that the result carries over to $[0,\infty]^I$.
\end{proof}
${}$\\
\textbf{Acknowledgments.}\\
The authors thank the referees for helpful comments and hints to the literature.
\end{appendix}

\bibliographystyle{plain}

\begin{thebibliography}{7}
\renewcommand{\baselinestretch}{0.8}\small
\begin{small}

\bibitem{Andersen and Borgan and Gill and Keiding:97}
P. K.~Andersen, O. Borgan, R.D.~Gill and N. Keiding.
\newblock {\em Statistical Models Based on Counting Processes}.
\newblock Springer, 1997.

\bibitem{Cvitanic and Karatzas:01}
J.~Cvitanic and I.~Karatzas.
\newblock Generalized Neyman-Pearson lemma via convex duality.
\newblock {\em Bernoulli}, 7:79--97, 2001.

\bibitem{Delbaen and Schachermayer:94}
F.~Delbaen and W.~Schachermayer.
\newblock A general version of the fundamental theorem of asset pricing.
\newblock {\em Math. Ann.}, 300:463--520, 1994.

\bibitem{Foellmer and Leukert:99}
H.~F\"ollmer and P.~Leukert.
\newblock Quantile Hedging.
\newblock {\em Finance Stochast.}, 3:251--273, 1999.

\bibitem{Foellmer and Leukert:00}
H.~F\"ollmer and P.~Leukert.
\newblock Efficient hedging: Cost versus shortfall risk.
\newblock {\em Finance Stochast.}, 4:117--146, 2000.

\bibitem{Foellmer and Schied:04}
H.~F\"ollmer and A.~Schied.
\newblock {\em Stochastic finance. An introduction in discrete time}.
\newblock de Gruyter, 2004.

\bibitem{Gerber and Shiu:94}
H. U.~Gerber and E. S.~Shiu.
\newblock Option Pricing by Esscher Transforms.
\newblock {\em Trans. Soc. Actuar.}, XLVI:98--140, 1994.

\bibitem{Gushchin and Mordecki:02}
A. A.~Gushchin and E.~Mordecki.
\newblock Bounds on option prices for semimartingale market models.
\newblock {\em Proc. Steklov Inst. Math.}, 273:73--113, 2002.

\bibitem{Hajek and Sidak and Sen:99}
J.~H\'{a}jek, Z.~\v{S}id\'{a}k and P.K.~Sen.
\newblock {\em Theory of rank tests}.
\newblock Acad. Press, 1999.

\bibitem{Hubalek and Schachermayer:98}
F.~Hubalek and W.~Schachermayer.
\newblock When does convergence of asset prices imply convergence of option prices?
\newblock {\em Mathematical Finance}, 8(4):385--403, 1998.

\bibitem{Huber and Strassen:73}
P.~Huber and V.~Strassen.
\newblock Minimax tests and the Neyman-Pearson lemma for capacities.
\newblock {\em Ann. Statistics}, 1:251--263, 1973.
\newblock Correction:
\newblock {\em Ann. Statistics}, 2:223--224, 1974.

\bibitem{Jacod:89}
J.~Jacod.
\newblock Convergence of filtered statistical models and Hellinger processes.
\newblock {\em Stochastic Processes and their applications.}, 32:47--68, 1989.

\bibitem{Jacod and Shiryaev:03} 
J. Jacod and A. N. Shiryaev.
\newblock {\em Limit Theorems for Stochastic Processes}.
\newblock Springer-Verlag, 2003.

\bibitem{Janssen and Milbrodt:93}
A.~Janssen and H.~Milbrodt.
\newblock R\'{e}nyi type goodness of fit tests with adjusted principal direction of alternatives.
\newblock {\em Scand. J. Statist.}, 20:177--194, 1993.

\bibitem{Kabanov and Kramkov:94}
Yu. M.~Kabanov and D. O.~Kramkov.
\newblock Large financial markets: Asymptotic arbitrage and contiguity.
\newblock {\em Theory Probab. Appl.}, 39:182--187, 1994.

\bibitem{Karatzas and Shreve:91} 
I. Karatzas and S. E. Shreve.
\newblock {\em Brownian Motion and Stochastic Calculus}.
\newblock Springer-Verlag, 1991.

\bibitem{Kelley:75}
J. L. Kelley.
\newblock {\em General Topology}.
\newblock Springer-Verlag, 1975.

\bibitem{Korn and Korn:01} 
R. Korn and E. Korn.
\newblock {\em Option Pricing and Portfolio Optimization}.
\newblock American Math. Society, 2001.

\bibitem{LeCam:86}
L.~LeCam.
\newblock {\em Asymptotic Methods in Statistical Decision Theory}.
\newblock Springer-Verlag, 1986.

\bibitem{Lehmann and Romano:05}
E. L. Lehmann and J. P. Romano.
\newblock {\em Testing Statistical Hypotheses}.
\newblock Springer-Verlag, 2005.

\bibitem{Norberg:04}
E.~Norberg.
\newblock On filtered experiments and the information contained in likelihood ratios.
\newblock {\em Mathematical Methods of Statistics}, 13:50--68, 2004.

\bibitem{Rudin:91}
W.~Rudin.
\newblock {\em Functional analysis}.
\newblock McGraw-Hill, 1991.

\bibitem{Rudloff and Karatzas:10}
B.~Rudloff and I.~Karatzas.
\newblock Testing composite hypotheses via convex duality.
\newblock {\em Bernoulli}, 16:1224--1239, 2010.

\bibitem{Rueschendorf:88}
L.~R\"uschendorf.
\newblock {\em Asymptotische Statistik}.
\newblock B.G. Teubner, 1988.

\bibitem{Schied:04}
A.~Schied.
\newblock On the Neyman-Pearson Problem for law-invariant risk measures and robust utility functionals.
\newblock {\em Annals Applied Probability}, 14:1398--1423, 2004.

\bibitem{Schied:05}
A.~Schied.
\newblock Optimal investments for robust utility functionals in complete market models.
\newblock {\em Mathematics of Operations Research}, 30:750--764, 2005.

\bibitem{Shiryaev:99}
A. N. Shiryaev.
\newblock {\em Essentials of Stochastic Finance. Facts, Models, Theory}.
\newblock  World Scientific, 1999.

\bibitem{Shiryaev and Spokoiny:00}
A. N. Shiryaev and V. G. Spokoiny.
\newblock {\em Statistical Experiments and Decisions: Asymptotic Theory}.
\newblock World Scientific, 2000.

\bibitem{Strasser:85a}
H.~Strasser.
\newblock {\em Mathematical Theory of Statistics}.
\newblock de Gruyter, 1985.

\bibitem{Strasser:85b}
H.~Strasser.
\newblock {\em Einf\"uhrung in die lokale asymptotische Theorie der Statistik}.
\newblock Univ. Bayreuth, 1985.

\bibitem{Strasser:87}
H.~Strasser.
\newblock Stability of Filtered Experiments.
\newblock {\em Contributions to Stochastics}, In honour of the 75th birthday of W. Eberl:202--213.
\newblock Physica-Verlag, 1987.

\bibitem{Torgersen:91}
E. N. Torgersen.
\newblock {\em Comparison of statistical experiments}.
\newblock Cambridge Univ. Press, 1991.

\bibitem{Witting:85} 
H.~Witting.
\newblock {\em Mathematische Statistik I. Parametrische Verfahren bei festem Stichprobenumfang}.
\newblock B.G. Teubner, 1985.

\bibitem{Witting and MuellerFunk:95} 
H.~Witting and U. M\"uller-Funk.
\newblock {\em Mathematische Statistik II}.
\newblock B.G. Teubner, 1995.
\end{small}
\end{thebibliography}

\bigskip
\noindent
\parbox[t]{.48\textwidth}{
Arnold Janssen\\
Heinrich-Heine-Universit\"at D\"usseldorf \\
Universit\"atsstr. 1\\
40225 D\"usseldorf, Germany\\
janssena@math.uni-duesseldorf.de } \hfill
\parbox[t]{.48\textwidth}{
Martin Tietje\\
Heinrich-Heine-Universit\"at D\"usseldorf \\
Universit\"atsstr. 1\\
40225 D\"usseldorf, Germany\\
tietje@math.uni-duesseldorf.de }

\end{document}